\documentclass[12pt]{article}
\usepackage{amsmath,amssymb,amsthm}
\usepackage[a4paper,margin=1in]{geometry}
\usepackage{hyperref}
\usepackage{graphicx}
\usepackage{algorithm}
\usepackage{algpseudocode}
\usepackage{xcolor}
\usepackage{tikz}
\usepackage[most]{tcolorbox}
\usetikzlibrary{matrix, arrows.meta}

\tcbset{
  stepbox/.style={
    enhanced,
    colback=white,
    colframe=blue!50!black,
    fonttitle=\bfseries,
    coltitle=black,
    boxrule=0.8pt,
    arc=2mm,
    left=1mm, right=1mm, top=1mm, bottom=1mm,
    title={Step \thetcbcounter},
    before skip=10pt, after skip=10pt,
  }
}

\title{{On the most reliable graphs with fixed redundancy}}

\author{
Rotem Brand \and
Reuven Cohen \and
Simi Haber \and
Baruch Barzel
}

\date{}

\begin{document}

\maketitle

\begin{center}
\small
Department of Mathematics, Bar-Ilan University, Ramat Gan, Israel\\
\vspace{2mm}
*Corresponding author(s). E-mail(s): \texttt{rotembrand1234@gmail.com, reuven@math.biu.ac.il, simi@math.biu.ac.il, baruchbarzel@gmail.com.}
\end{center}

\newcommand{\cs}{\ensuremath{\mathcal{CS}}}    
\newcommand{\mcs}{\ensuremath{\mathcal{C}}}
\newcommand{\tmcs}{\ensuremath{\mathcal{C}^t}}
\newcommand{\ntmcs}{\ensuremath{\mathcal{C}^n}}

\theoremstyle{plain}
\newtheorem{theorem}{Theorem}[section]
\newtheorem{lemma}[theorem]{Lemma}
\newtheorem{corollary}[theorem]{Corollary}
\newtheorem{proposition}[theorem]{Proposition}
\newtheorem{fact}[theorem]{Fact}
\newtheorem{observation}[theorem]{Observation}
\newtheorem{claim}[theorem]{Claim}

\theoremstyle{definition}
\newtheorem{definition}[theorem]{Definition}
\newtheorem{example}[theorem]{Example}
\newtheorem{conjecture}[theorem]{Conjecture}
\newtheorem{open}[theorem]{Open Problem}
\newtheorem{problem}[theorem]{Problem}
\newtheorem{question}[theorem]{Question}

\theoremstyle{remark}
\newtheorem{remark}[theorem]{Remark}
\newtheorem{note}[theorem]{Note}

\begin{abstract}
The all-terminal reliability of a graph $G$ is the probability that $G$ remains connected when each edge fails independently with probability $p$. For fixed $n$ and $m$, the uniformly most reliable problem asks which graph with $n$ vertices and $m$ edges maximizes reliability for all $p \in [0,1]$. Although such graphs do not always exist, optimal graphs in the regime $p \to 0$ always do and are determined by the structure of their minimal cut sets.

We establish a structural characterization of graphs that are most reliable near $p=0$. Our results partially resolve a conjecture of Bourel et al., showing that, under suitable conditions, regular graphs with maximal girth are optimal. Extending this analysis to graphs with fixed redundancy $r=m-(n-1)$ and sufficiently large $n$, we show that the most reliable graphs are obtained by subdividing the most reliable cubic graphs with $2(r-1)$ vertices. The general conjecture remains open.

Unlike previous results, which resolved only small redundancy cases or very dense regimes, our approach yields a substantial extension of the known range. We determine the unique cubic candidates for uniformly most reliable graphs for all redundancy levels $m-n \le 19$, and prove the non-existence of uniformly most reliable graphs for several infinite families with fixed redundancy and asymptotically large $n$. These results significantly enlarge both the candidate class and the range of provable non-existence.
\end{abstract}

\section{Introduction}
The problem of identifying uniformly most reliable graphs isolates the purely combinatorial impact of topology on network reliability. We consider graphs with unreliable edges, where each edge independently fails with equal probability $p$. The \textit{all-terminal reliability} of a graph is the probability that all vertices remain connected under this failure model. A \textit{uniformly most reliable graph} with $n$ vertices and $m$ edges is a graph that maximizes reliability for all values of $p$. Understanding such graphs clarifies how structural properties alone determine reliability.

Although the model is simple, it has broad applicability across infrastructure networks, including communication systems and electrical distribution grids. Several examples of \emph{uniformly most reliable graphs} have been identified in the literature~\cite{boesch1991existence,wang1994proof,romero2017building,Rela2017PetersenGI,canale2019building}. These graphs are closely related to subdivisions of $3$-regular graphs, in which each edge of a cubic graph is replaced by a path~\cite{wang1997structure}.

Since any connected graph with $n$ vertices has at least $n-1$ edges, it is natural to define the \emph{redundancy} (also called the circuit rank) as $r = m - (n - 1)$, where $m$ is the total number of edges.

Uniformly most reliable graphs do not always exist~\cite{myrvold1991uniformly}. Consequently, the notion of a \textit{most reliable graph near zero or one} is introduced~\cite{brown2014nonexistence}. A graph is most reliable near zero (respectively, near one) if it maximizes reliability for all $p$ in a sufficiently small interval near zero (respectively, near one), among graphs with the same numbers of vertices and edges. Unlike uniform optimality, optimality near zero or one always exists. Since infrastructure components are typically highly reliable, our focus is on graphs that are most reliable near zero.

This article develops a method for constructing graphs that are most reliable near zero. Our main structural result shows that regular graphs of maximal girth satisfying an explicit structural condition are most reliable near zero. In particular, this provides further evidence for the conjecture of Bourel et al~\cite{GRASPHeuristics} that uniformly most reliable regular graphs attain maximum girth. While we verify this conjectural principle under additional conditions, the general case remains open.

Extending this analysis to graphs with fixed redundancy $r$ and sufficiently large $n$, we prove that the most reliable graphs are obtained by subdividing the most reliable cubic graphs with $2(r-1)$ vertices. By enumerating $3$-regular graphs of large girth with a small number of vertices, we identify the only candidates for the most reliable cubic graphs on fewer than $38$ vertices. These serve as the basis for determining uniformly most reliable graphs with up to $20$ redundant edges and a large number of vertices (see Figure~\ref{fig:most_reliable_graphs}).

We also identify a structural property governing reliability near one and use it to prove the non-existence of uniformly most reliable graphs for several infinite families with constant redundancy and asymptotically large numbers of vertices.

The core contribution of our work is a unifying structural principle that governs reliability across a wide range of redundancy levels. Rather than focusing on isolated values of $r$, our approach reveals how girth and the arrangement of subdivided paths interact to optimize reliability as the number of vertices is sufficiently large. This structural insight enables the systematic identification of candidate graphs for uniform optimality and yields tight upper bounds on achievable reliability as a function of graph size and redundancy.
 
The remainder of the paper is organized as follows. Section~\ref{sec:overview} introduces the core framework and algorithmic strategy and illustrates the main ideas without detailed proofs. Section~\ref{sec:regular} develops the theoretical foundation, showing that regular graphs with maximal girth optimize reliability near $p=0$. Section~\ref{sec:sparse} leverages these results to characterize reliable graphs with constant redundancy and a large number of vertices. Finally, Section~\ref{sec:most_reliable} applies the proposed algorithm to a database of over $5{,}000$ cubic graphs and identifies candidates for uniform optimality.

\section{Outline of the Main Results}\label{sec:overview}

This section outlines the main ideas and methodologies of the article and provides the intuition behind our approach. Subsequent sections present detailed proofs and technical derivations. Here, we focus on introducing the core concepts and strategies that form the foundation of our results.

For consistency in notation, we assume that a graph $ G $ consists of $ n $ vertices and $ m $ edges. Note that any connected graph must contain at least $ n-1 $ edges. Accordingly, we define the number of redundant edges in the graph as $ r = m - (n-1) $, also called the cyclomatic number. This value can be interpreted as the "budget" for additional edges beyond the minimum required for connectivity. As is standard, we denote the set of vertices in the graph by $ V(G) $ and the set of edges by $ E(G) $.

\subsection{The most reliable graphs}
Each edge of $G$ fails independently with a constant \emph{edge failure probability} $p$. 
The \emph{all-terminal reliability} (ATR) of $G$, denoted by $R_G(p)$, is defined as the probability that the network remains connected. 
For convenience, throughout this paper, we work with the \emph{unreliability polynomial}
\begin{equation}
U_G(p) = \Pr(G \text{ is disconnected}).
\end{equation}

Each event in the probability space corresponds to exactly $ k $ failing edges and $ m-k $ working edges. The probability of such an event is $ p^k q^{m-k} $, where $ q = 1-p $ represents the edge working probability. Assuming there are $ b_k $ cut sets with exactly $ k $ failing edges, $ U_G(p) $ can be expressed in the Bernstein representation:
\begin{equation} \label{eq:binom}
U_G(p) = \sum_{k=1}^m b_k p^k q^{m-k}.
\end{equation}
This representation shows that $ U_G $ is an $ m $-degree polynomial in the variable $ p $. These polynomials are monotonically increasing and satisfy the boundary conditions $ U_G(0) = 0 $ and $ U_G(1) = 1 $. Additionally, $ U_G(p) $ can also be expressed in its standard form:
\begin{equation}
\label{eq:power_series}
U_G(p) = \sum_{k=1}^m a_k p^k.
\end{equation}

\begin{definition}[Uniformly Most Reliable Graph]
A graph with $n$ vertices and $m$ edges is called \emph{uniformly most reliable} if it minimizes the unreliability polynomial $U_G(p)$ among all graphs with the same number of vertices and edges, for every $p \in [0,1]$.
\end{definition}

While identifying uniformly most reliable graphs is the central objective of reliability optimization, it has been shown in 1991 that such graphs do not always exist~\cite{myrvold1991uniformly}. This limitation motivates the introduction of weaker notions of optimality that capture reliability in a localized regime~\cite{brown2014nonexistence}.

\begin{definition}[Most Reliable Graph Near Zero or One]
A graph with $n$ vertices and $m$ edges is called \emph{most reliable near zero} if there exists $p' > 0$ such that it minimizes the unreliability polynomial $U_G(p)$ among all such graphs for all $p \in [0, p']$. Similarly, it is called \emph{most reliable near one} if there exists $p'' < 1$ such that it minimizes $U_G(p)$ for all $p \in [p'', 1]$.
\end{definition}

Unlike uniformly most reliable graphs, a most reliable graph near zero or one always exists, as the number of graphs with fixed $n$ and $m$ is finite.

In this article, we primarily focus on the regime near $p = 0$, which corresponds to systems composed of highly reliable components, as is typical in many real-world infrastructure networks. Additionally, we note that any uniformly most reliable graph, if it exists, must necessarily be the most reliable near both zero and one.

Our primary tool for identifying the most reliable graphs is the \emph{coefficients comparison method}. This method evaluates the reliability of graphs by comparing their coefficient vectors, $ (b_1, b_2, \dots, b_m) $, derived from the Bernstein representation of the unreliability polynomial. To prove that a graph is uniformly most reliable, it is enough to show that each of its coefficients is minimal among all graphs with the same number of vertices and edges. 

For the case of most reliable graphs near zero, we define a total order for two graphs $ G $ and $ H $ such that $ G < H $ if the coefficient vector of $ G $ is lexicographically smaller than that of $ H $. In this order, the comparison begins with the first coefficient; if they are equal, the second coefficient is compared, and so on. Similarly, for most reliable graphs near one, we define an analogous total order using the lexicographical ordering of the reverse coefficient vectors. 

The coefficients comparison method relies on the following fundamental lemma:

\begin{lemma}[Coefficients Comparison \cite{brown2014nonexistence}]
\label{lem:coefficients_comparison}

Let $ f(p) = \sum_{k=1}^{m} c_k p^k (1-p)^{m-k} $ and $ g(p) = \sum_{k=1}^{m} d_k p^k (1-p)^{m-k} $ be two polynomials. The following holds:

\begin{enumerate}
    \item If there exists $ i $ such that $ \forall j \in [0, i): c_j = d_j $ and $ c_i \leq d_i $, then there exists $ p' $ such that $ \forall p \in [0, p'] : f(p) \leq g(p) $;
    
    \item If there exists $ i $ such that $ \forall j \in [0, i): c_{m-j} = d_{m-j} $ and $ c_{m-i} \leq d_{m-i} $, then there exists $ p' $ such that $ \forall p \in [p', 1] : f(p) \leq g(p) $;
    
    \item If $ \forall i : c_i \leq d_i $, then $ \forall p \in [0, 1] : f(p) \leq g(p) $;
\end{enumerate}
\end{lemma}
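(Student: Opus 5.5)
The plan is to work with the difference $h(p) = g(p) - f(p) = \sum_{k=1}^m e_k p^k (1-p)^{m-k}$, where $e_k = d_k - c_k$, and read off its sign near the endpoints from its lowest-order terms. (The statement lists indices from $j=0$; since the sums start at $k=1$, we set $c_0=d_0=0$, so the $j=0$ condition is vacuous.)

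Part 3 is immediate. For $p\in[0,1]$ each basis term $p^k(1-p)^{m-k}$ is nonnegative, so $e_k\ge 0$ for all $k$ gives $h(p)\ge 0$.

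For part 1, the hypothesis gives $e_j=0$ for $j<i$ and $e_i\ge 0$, so
\[
h(p)=p^i\Bigl(e_i(1-p)^{m-i}+\sum_{k>i} e_k p^{k-i}(1-p)^{m-k}\Bigr).
\]
Call the bracket $\phi(p)$; it is a polynomial.
\begin{itemize}
\item If $e_i>0$, then $\phi(0)=e_i>0$, so by continuity $\phi>0$ on some $[0,p']$. Since $p^i\ge 0$, we get $h\ge 0$ there.
\item If $e_i=0$, this continuity argument does not apply. This is the only genuine obstacle, and it comes from reading the hypothesis literally.
\end{itemize}

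The obstacle in the case $e_i=0$ is real: part 1 as literally stated can fail. Take $c_i=d_i$ but $c_{i+1}>d_{i+1}$. Then $h(p)=p^{i+1}(e_{i+1}+O(p))$ with $e_{i+1}<0$, so $f>g$ for all small $p>0$. My plan is therefore to prove the intended lexicographic version, which is how the order is used just before the lemma. Choose $i$ to be the first index with $c_i\ne d_i$, or $c_i=d_i$ for all $i$.
\begin{itemize}
\item If $c_i=d_i$ for all $i$, then $f=g$ and the claim is trivial.
\item Otherwise the lexicographic hypothesis gives $c_i<d_i$, so $e_i>0$, and the continuity argument above applies with $i$ replaced by this first index.
\end{itemize}
In short, we apply the argument at the first index where the coefficients differ, rather than at any index satisfying the stated condition.

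Part 2 reduces to part 1 by symmetry. Substitute $p\mapsto 1-p$ and set $k'=m-k$; this turns $\sum_k e_k p^k(1-p)^{m-k}$ into $\sum_{k'} e_{m-k'}p^{k'}(1-p)^{m-k'}$. The reversed-coefficient hypothesis is then exactly the hypothesis of part 1, with the same lexicographic caveat. Two points need care in this step.
\begin{itemize}
\item The $k=m$ term maps to $k'=0$, which the sum in part 1 does not include. Allowing $k'=0$ does not change the continuity argument.
\item The interval $[0,p']$ obtained for the substituted polynomial corresponds to $[1-p',1]$ for the original one.
\end{itemize}
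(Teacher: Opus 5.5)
Your proof is correct. The paper gives no proof of this lemma; it imports it from Brown et al. Your argument is the standard dominant-term one: factor out $p^i$ at the first index where the coefficients differ, use continuity of the cofactor at $p=0$, and reduce the $p\to 1$ case to the $p\to 0$ case via $p\mapsto 1-p$. It supplies the missing justification in a self-contained way.

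Your caveat about the hypothesis is well founded and worth keeping. Read literally, with $c_i\le d_i$ allowed to be an equality, parts 1 and 2 hold only in the degenerate sense $p'=0$ (resp.\ $p'=1$). For example, $m=2$, $(c_1,c_2)=(0,1)$, $(d_1,d_2)=(0,0)$, $i=1$ gives $f(p)=p^2>0=g(p)$ for every $p>0$.

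The version you prove is: strict inequality at the first differing index, or $f=g$. This is exactly the lexicographic comparison the paper actually relies on. For instance, it concludes from $U_{G_1}<U_{G_2}$ in lexicographic order of coefficients that $G_1$ is more reliable near zero.
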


Using the coefficient comparison method, we can define a sequence of reliability classes, $ \{\mathcal{A}_k\}_{k=0}^m $. The initial class, $ \mathcal{A}_0 $, corresponds to $\mathcal{G}_{n,m} $, the set of all multigraphs with $ n $ vertices and $ m $ edges. Each subsequent class, $ \mathcal{A}_k $, consists of the graphs in $ \mathcal{A}_{k-1} $ that have the smallest $k$-coefficient among all graphs in the previous class. As we progress through the sequence, the classes become smaller, and the most reliable graph near zero can be found in $ \mathcal{A}_m $. Since the first $k$ coefficients in each representation of the unreliability polynomial uniquely determine the first $k$ coefficients in the other, the classes $\mathcal{A}_k$ may equivalently be defined using either the binomial coefficients~\eqref{eq:binom} or the power-series coefficients~\eqref{eq:power_series}. Figure~\ref{fig:reliability_classes} illustrates an example of the reliability classes for graphs with 16 vertices and 24 edges.

Similarly, we can construct a sequence of sets $ \{B_k\}_{k=0}^m $, where $ B_0 $ is also $ \mathcal{G}_{n,m} $. In this case, each subsequent set $ B_k $ contains the graphs in $ B_{k-1} $ that have the smallest $ (m-k) $-coefficient. The most reliable graph near one can be found in $ B_m $.

\subsection{Previous results}
\label{subsec:prev_res}
An important observation is that the most reliable graphs near zero exhibit maximal edge connectivity. Edge connectivity is defined as the minimum number of edges whose removal disconnects the graph. If the edge connectivity of a graph is $ k $, then all coefficients $ b_1 $ to $ b_{k-1} $ in its unreliability polynomial are zero, placing the graph in the $ \mathcal{A}_{k-1} $ class.

Harary \cite{harary1962maximum} constructed a family of graphs with maximal edge connectivity. A graph with maximal edge connectivity maximizes the minimum degree of its vertices, as the edge connectivity of a graph is at most equal to its minimum degree. For a graph with $ n $ vertices and $ m $ edges, the minimum degree is $ \lfloor \frac{2m}{n} \rfloor $, and for the graphs that maximize the edge connectivity, the maximum degree is $ \lceil \frac{2m}{n} \rceil $. In particular, if $n$ divides $2m$, then the graph with maximal edge connectivity is $ k $-regular, where $ k = \frac{2m}{n} $.

Near one, the reliability of a graph depends on the number of spanning trees in the graph. Using the coefficient compression method, for coefficients $ \{b_k\} $ in the range $ m - (n-2) \leq k \leq m $, we have $ b_k = \binom{m}{k} $, as connecting the graph requires at least $ n-1 $ working edges. However, $b_r$ for $r=m-(n-1)$ is equal to, $ b_{r} = \binom{m}{r} - T(G) $, where $ T(G) $ represents the number of spanning trees of the graph, also known as the \emph{tree number}. Consequently, the most reliable graph near one, with respect to the ATR polynomial, maximizes the tree number.

This research primarily focuses on sparse networks, where $n \leq m < 1.5n$. For the most reliable sparse graphs near zero, the minimum degree is two, which ensures that the edge connectivity is also maximized at two. In these graphs, a chain of $c-1$ consecutive vertices with a degree of two can be replaced by a single edge with a failure probability of $ 1 - q^{c} $. The resulting transformed graph is referred to as the \emph{structure graph} or \emph{distillation}, and it is denoted by $ S(G) $. In the structure graph, the edges are called chains, while the vertices are referred to as structure vertices. Figure~\ref{fig:structure} illustrates an example of a graph and its corresponding structure graph.

\begin{figure}
    \centering
    \includegraphics[width=0.7\linewidth]{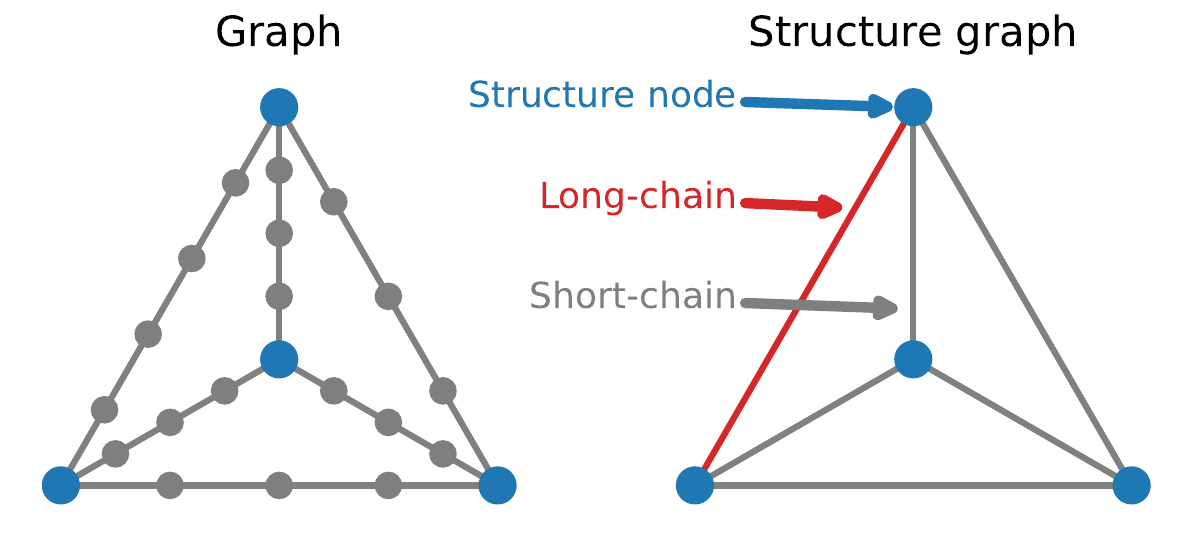}
    \caption{An example of a structure graph. Consecutive vertices with a degree of two form a chain, while vertices with a degree other than two become structure vertices. In optimal graphs, the lengths of the chains differ by at most one. In this scenario, we classify the chains as short or long.}
    \label{fig:structure}
\end{figure}

According to the findings in \cite{wang1997structure}, the most reliable graph near zero is a 3-connected, 3-regular structure graph with nearly equal chain lengths. To minimize the impact of cut sets with two edges, the structure graph must be 3-connected, ensuring that the only cut sets with two edges are pairs of edges within the same chain. Furthermore, reducing the number of such edge pairs requires minimizing chain lengths, which implies that the lengths of all chains should differ by at most one. Setting the structure graph to be 3-regular further maximizes the number of chains, distributing the connectivity more evenly.

In summary, a graph with a 3-connected, 3-regular structure graph, where the lengths of the chains differ by at most one, minimizes the second coefficient of the unreliability polynomial. The chains in such a graph can be classified as either short or long chains. An example of this structure is illustrated in Figure~\ref{fig:structure}.

Next, we focus on minimizing the third coefficient of the unreliability polynomial. It is important to note that each structure node in the structure graph inherently forms a trivial cut set with the three chains connected to it. To minimize the third coefficient, we require that the only cut sets involving three chains correspond to those formed by a single structure node. Furthermore, in the structure graph, the number of cut sets with three edges increases if two long chains connect to the same structure node. To mitigate this, it is essential to minimize the number of long chains intersecting at the same structure node \cite{wang1997structure}. Figure~\ref{fig:structure} illustrates an example of short and long chains intersecting at a single structure node.

\subsection{ Main Results}
In this section, we present the main methods and results used in this paper. We start in the case where $m=kn/2$ for natural $k\geq3$, resulting in an average degree of $k$. We show that in some cases, the most reliable graph near zero is a $k$-regular graph with high girth. This result is formally proved in Section~\ref{sec:regular}.

Then, we focus on the sparse case where $m<1.5n$ and the number of vertices is large. In this scenario, the average degree is less than 3. We demonstrate that, in this case, the uniformly most reliable graph must have a structure of the most reliable 3-regular graph near zero of the appropriate size, and we also discuss the optimal placement of the long chains. Finally, we find that in some cases, the most reliable graph near one has non-balanced chain lengths, i.e., chain lengths differ by more than one, rendering the uniformly most reliable graph non-existent. We develop those results deeply in Section~\ref{sec:sparse}.

\subsubsection{High-Girth Graphs Are Reliable Near Zero}

Our main result shows that, under appropriate conditions, regular graphs with high girth are the most reliable graphs near zero. We present the main ideas behind this result in this section and provide a formal proof in Section~\ref{sec:regular}.

This observation is especially relevant for the design of sparse graphs with a fixed number of redundant edges. In Section~\ref{sec:sparse}, we show that in the case of graphs with $r$ redundant edges and a large number of vertices, the uniformly most reliable graph has a structure graph that is the most reliable 3-regular graph near zero with $2(r-1)$ nodes. Therefore, each identification of the most reliable 3-regular graph near zero corresponds to an infinite family of uniformly most reliable graphs (if they exist).

To understand why low-girth graphs are less reliable, we need to examine their minimal cut sets. A \textit{minimal cut set} is a cut set that does not contain any smaller cut set, effectively representing the "atomic elements" of cut sets. Each minimal cut set divides the graph into exactly two connected components. By focusing on the smaller component, we observe that each minimal cut set is induced by a connected subgraph.

Cycles in the graph create low-order cut sets, reducing the reliability of low-girth graphs. For instance, in the most reliable 3-regular graph near zero, the only cut sets with three edges are those induced by a single vertex. In contrast, Figure~\ref{fig:cut_sets} illustrates how a cycle with three vertices induces an additional cut set with three edges. Consequently, a graph with a girth of three cannot be the most reliable near zero, as determined by the coefficient comparison method. Proposition~\ref{prop:The_size_of_the_induced_cut_set} extends this result, demonstrating that cycles induce lower-order cut sets than trees.

\begin{figure}
    \centering
    \includegraphics[width=1\linewidth]{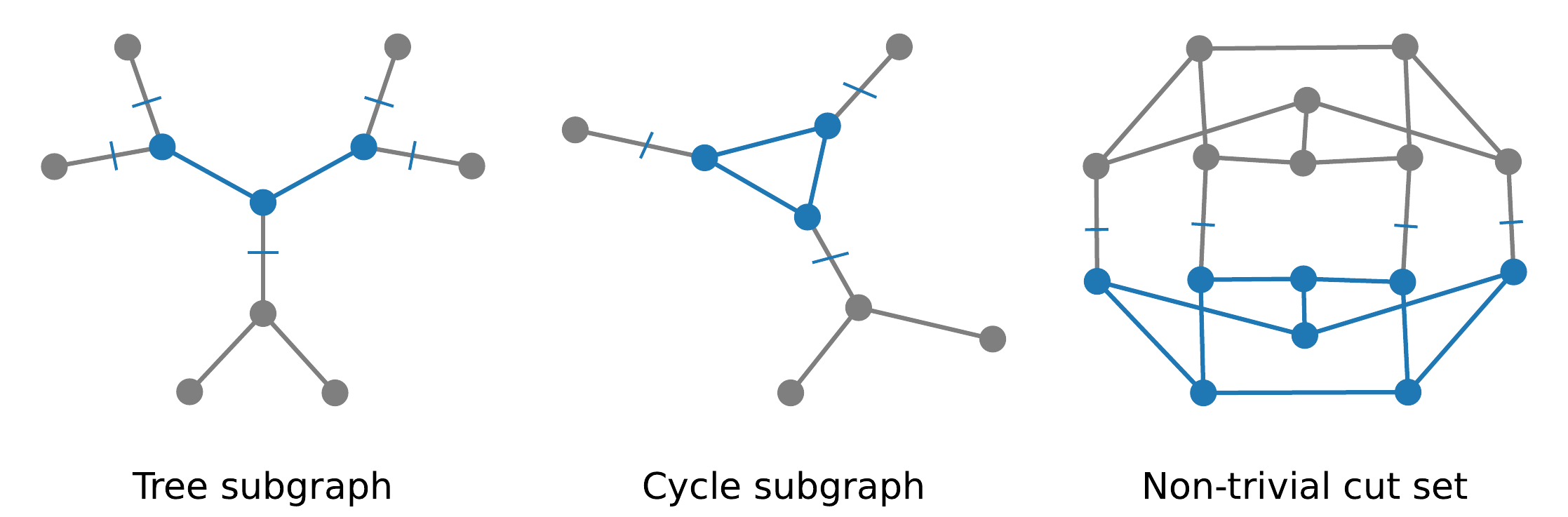}
\caption{Various Subgraphs(Skeletons) and their induced cut sets.}
\label{fig:cut_sets}
\end{figure}

\begin{proposition}[The Size of the Induced Cut Set]
\label{prop:The_size_of_the_induced_cut_set}
Let $ G $ be a $ k $-regular graph. For any subgraph $ X $, denote the number of connected components, edges, and vertices in $ X $ by $ c(X) $, $ m(X) $, and $ n(X) $, respectively. Let $\partial(X)$  represent the induced cut set. Then:
\begin{enumerate}
    \item $ |\partial(X)| = k \cdot n(X) - 2 \cdot m(X) $;
    \item In particular, if $ X $ is a forest, then $ |\partial(X)| = n(X) \cdot (k-2) + 2 \cdot c(X) $;
    \item If $ X $ is a cycle, then $ |\partial(X)| = n(X) \cdot (k-2) $.
\end{enumerate}
\end{proposition}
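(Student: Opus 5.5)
We prove part 1 by double counting the edge endpoints at vertices of $X$; parts 2 and 3 then follow by substituting the standard edge counts for forests and cycles. Throughout, we read $X$ as an induced subgraph, i.e.\ $X$ is spanned by a vertex set $S=V(X)$ and contains every edge of $G$ with both endpoints in $S$. The cut set $\partial(X)$ is then the set of edges with exactly one endpoint in $S$. (If $X$ were not induced, edges of $G$ inside $S$ but missing from $X$ would have to be counted separately. In the applications, e.g.\ trees or cycles in graphs of sufficiently large girth, the skeleton is automatically induced, so this reading is harmless. We note this assumption explicitly in the proof.)

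For part 1, count the edge-endpoint incidences at vertices of $S$. Since $G$ is $k$-regular, the total is $\sum_{v\in S}\deg_G(v)=k\,n(X)$. Each edge with both endpoints in $S$ is an edge of $X$ and contributes exactly $2$ to this sum. Each edge with exactly one endpoint in $S$ lies in $\partial(X)$ and contributes exactly $1$. For multigraphs, a parallel edge is counted with its multiplicity. A loop would need a convention, but loops never lie in a cut and can be excluded or counted within $m(X)$. This gives
\[
k\,n(X)=2\,m(X)+|\partial(X)|,
\]
which rearranges to the formula in part 1.

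For part 2, a forest with $c(X)$ components on $n(X)$ vertices has $m(X)=n(X)-c(X)$ edges, since each tree component on $n_i$ vertices has $n_i-1$ edges. Substituting into part 1 gives
\[
|\partial(X)|=k\,n(X)-2n(X)+2c(X)=n(X)(k-2)+2c(X).
\]
For part 3, a cycle satisfies $m(X)=n(X)$, so part 1 gives $|\partial(X)|=(k-2)\,n(X)$.

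The main obstacle is not computational but definitional: stating precisely that $X$ must be induced, or else that $m(X)$ counts all edges of $G$ inside $V(X)$. Once this convention is fixed, the double-counting identity of part 1 carries the whole argument.
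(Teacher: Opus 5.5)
Your proof is correct and is the same double-counting argument the paper gives: each vertex of $X$ contributes $k$ edge-endpoints and each internal edge absorbs two of them; parts 2 and 3 then follow by substituting $m(X)=n(X)-c(X)$ and $m(X)=n(X)$. Your explicit requirement that $X$ be induced (or that $m(X)$ count all edges of $G$ inside $V(X)$) is a genuine precision the paper leaves implicit, and, as you note, it holds automatically for trees on fewer than $g$ vertices and for shortest cycles in graphs of girth $g$.
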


The second and third propositions follow directly from the first proposition, which is easily justified by noting that each vertex in $ X $ connects to $ k $ edges, while each internal edge reduces the number of external edges by two. Figure~\ref{fig:cut_sets} illustrates several subgraphs and their corresponding induced cut sets. From this, we conclude that cycles generate smaller cut sets than trees, and in both cases, the size of the cut set is determined by the number of vertices in the cycle or the tree. Consequently, to minimize the presence of low-order cut sets, we aim to avoid short cycles.

In this context, it is convenient to relax the notion of \textit{trivial cut sets} as follows. In a graph with a girth of at least $ g $, the neighborhood of any vertex containing at most $ g-1 $ vertices is isomorphic to a subgraph of the infinite $ k $-regular tree. Consequently, the neighborhoods of vertices from different graphs with a girth of at least $ g $ are isomorphic (Theorem~\ref{lem:Global-homomorphism3}). Each subgraph with $ \ell \leq g-1 $ vertices induces a cut set with $ \ell(k-2)+2 $ edges. These cut sets are considered "trivial" because they appear in all graphs with sufficiently high girth.

If a graph contains only trivial cut sets up to $ \ell(k-2)+2 $ edges, it is more reliable near zero compared to graphs with non-trivial cut sets. Specifically, if a graph is free of non-trivial cut sets up to $ g $ edges, this suggests that the optimal graph near zero has a girth of at least $ g $. Figure~\ref{fig:reliability_classes} illustrates an example of reliability classes in $ \mathcal{G}_{16,24} $. We observe that in this figure, graphs in $ \mathcal{A}_k $ have a girth of at least $ k-2 $. Moreover, one of the graphs in $ \mathcal{A}_3 \setminus \mathcal{A}_4 $ has a girth of 4 but also contains a non-trivial cut set, providing an example of a non-trivial cut set that is not induced by a cycle.

\begin{figure}
    \centering
    \includegraphics[width=1\linewidth]{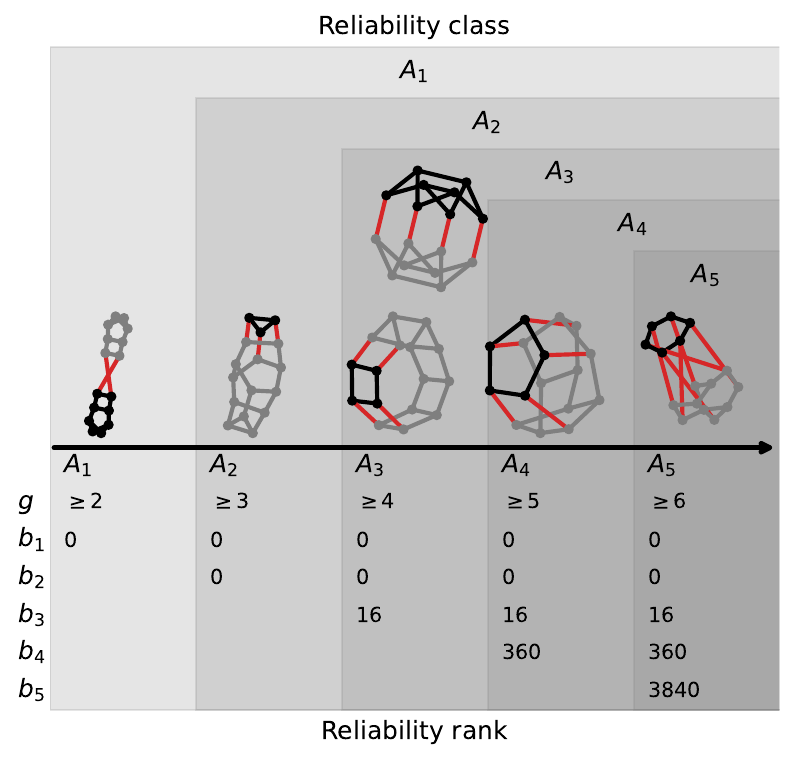}
    \caption{The reliability classes for graphs with $16$ vertices and $24$ edges. All the graphs that are not in the class $\mathcal{A}_5$ contain non-trivial cut sets(\textcolor{red}{\textbf{red}}) that are induced from a short cycle or from a bigger subgraph(\textbf{black}).}
    \label{fig:reliability_classes}
\end{figure}

We now present the main theorem of this article, which forms the foundation of our method for constructing highly reliable networks using high-girth graphs. The proof of this theorem is provided in Subsection~\ref{subsection:main_theorem}.

\begin{theorem}[Regular Graphs Without Non-Trivial Cut Sets] \label{thm:Most-reliable-graphs-regular}
Let $ n \geq 2 $ such that $ m=kn/2 \in \mathbb{N} $. Define $\mathcal{D}_{n,m}(g) $ as the set of all $ k $-regular graphs in $ \mathcal{G}_{n,m} $ that do not contain non-trivial cut sets with at most $ g(k-2) $ edges. Suppose there exists $ g \geq 4 $ such that $ \mathcal{D}_{n,m}(g) \neq \emptyset $. Then:
\[
\mathcal{A}_{g(k-2)} = \mathcal{D}_{n,m}(g).
\]
\end{theorem}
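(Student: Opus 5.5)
The plan is to show that for every $j \le g(k-2)$ the coefficient $b_j$ of a $k$-regular graph is bounded below by a quantity that depends only on $n,k,j$. This bound is attained exactly when every minimal cut set of size at most $j$ is trivial.

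\textbf{Step 1: count small cut sets through minimal cut sets.} Any cut set with $j$ failing edges contains a minimal cut set of size at most $j$. Each minimal cut set is $\partial(X)$ for a connected vertex set $X$, the smaller side. By Proposition~\ref{prop:The_size_of_the_induced_cut_set}, $|\partial(X)| = k\,n(X) - 2m(X)$. If $X$ is a tree with $\ell$ vertices this is $\ell(k-2)+2$. If $X$ contains a cycle, then $m(X)\ge n(X)$ and the size is at most $\ell(k-2)$.

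So for cut sets of size at most $g(k-2)$ only small $X$ matter:
\begin{itemize}
\item tree-induced cut sets have $\ell \le g-1$ vertices;
\item cycle-containing ones have $\ell \le g$ vertices.
\end{itemize}
The trivial cut sets are those induced by trees with $\ell\le g-1$ vertices in a graph of girth $\ge g$. Their number, and the number of ways to extend a union of them to a $j$-edge failing set, should depend only on local tree-like structure.

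\textbf{Step 2: girth of members of $\mathcal{D}_{n,m}(g)$ and local isomorphism.} Any cycle of length $\ell\le g$ induces a non-trivial cut set of size $\ell(k-2)\le g(k-2)$, except when $\ell=n$. Hence graphs in $\mathcal{D}_{n,m}(g)$ have girth $>g$, or the trivial degenerate case occurs, which I will handle separately. Using Theorem~\ref{lem:Global-homomorphism3}, all such graphs have isomorphic radius-$g$ neighbourhoods, namely copies of the $k$-regular tree.

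I will then write $b_j$ by inclusion--exclusion, or more simply as a count of failing sets $F$ with $|F|=j$ that disconnect the graph. Each such $F$ is classified by whether it contains some non-trivial minimal cut set.

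Suppose $F$ contains only trivial minimal cuts, i.e.\ it separates some tree-like $X$ with $|X|\le g-1$. Whether such an $F$ disconnects is a local property. I want to show that the number of these $F$ is a universal function $T_j(n,k)$ for all graphs of girth $>g$ (or $\ge g$). Then for every $k$-regular $G$,
\[
b_j(G) = T_j(n,k) + N_j(G),
\]
where $N_j(G)\ge 0$ counts the sets containing a non-trivial small cut. For graphs with low girth this identity needs care; see Step 4.

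\textbf{Step 3: close the induction on the classes $\mathcal{A}_j$.} First, $\mathcal{A}_{k-1}$ contains only $k$-edge-connected graphs. Since $k=2m/n$ and $\mathcal{D}$ is nonempty, the minimizers of $b_k$ are $k$-regular: a non-regular graph has a vertex of degree $<k$, giving $b_j>0$ for some $j<k$. Then, inductively for $j\le g(k-2)$:
\begin{itemize}
\item members of $\mathcal{D}$ achieve $b_j = T_j$;
\item any graph with a non-trivial cut of size $j_0\le g(k-2)$, first appearing at $j_0$, has $b_{j_0} > T_{j_0}$.
\end{itemize}
This gives $\mathcal{A}_{g(k-2)}=\mathcal{D}_{n,m}(g)$.

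\textbf{Step 4: the main obstacle.} The hard part is making ``$T_j$ is universal'' rigorous for a general $k$-regular graph that may have short cycles. The count of trivial-only failing sets could in principle be smaller in such a graph, because trees merge or extra failing edges overlap differently. This could compensate for the extra non-trivial cuts.

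I would first try an injection argument: map the tree-induced disconnecting sets of a high-girth graph into those of an arbitrary $G$ via local tree-like structure, using the fact that $j \le g(k-2)$ keeps everything within radius $<g$. The alternative is to compare at the first index $j_0$ where $G$ has a non-trivial minimal cut. Below $j_0$, every minimal cut of $G$ of size $<j_0$ is tree-induced, so $G$ looks locally tree-like at the relevant scale and the counts match. At $j_0$ the non-trivial cut adds at least one extra set.
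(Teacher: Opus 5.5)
Your architecture matches the paper's. You split the disconnecting sets into a part generated by trivial (small-tree-induced) cuts, which should be identical for all admissible graphs, and a non-trivial remainder. That remainder vanishes through order $g(k-2)$ on $\mathcal{D}_{n,m}(g)$ and is positive at the first index where a competitor has a non-trivial minimal cut. Your reduction to $k$-regular competitors via edge-connectivity is fine, and the paper leaves it implicit. The genuine gap is that the one step carrying the content, universality of the trivial part, is exactly what you defer in Step 4, so the proposal does not yet prove the theorem.

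This step is not a matter of counting small trees. By inclusion--exclusion, the number of $j$-sets containing at least one trivial cut depends on the distribution of $|\partial(T_1)\cup\dots\cup\partial(T_s)|$ over all families of trivial cuts. Overlapping families can span more than $g$ vertices, beyond the single-tree count of Lemma~\ref{lem:Global-homomorphism3}. The paper supplies Lemma~\ref{lem:intersection-homomorphism} for this. For any two $k$-regular graphs of girth at least $\ell$, it gives a bijection $\mathcal{E}_{G_1}(\ell(k-2))\to\mathcal{E}_{G_2}(\ell(k-2))$ preserving the number of trees and $|\partial(\varphi)|$. The proof is by induction on the number of trees, with joint embeddings reduced by merging intersecting trees. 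Inserted into \eqref{eq:represent_trivial_polynom}, this yields $U^{(t)}_{G_1}\overset{\omega}{=}U^{(t)}_{G_2}$. Your proposed foundation, that radius-$g$ neighbourhoods are copies of the $k$-regular tree, is false. Girth $>g$ only forces connected subgraphs on at most $g$ vertices to be trees, and a $(g+1)$-cycle already fits in a ball of radius about $g/2$. So the injection you sketch has nothing to stand on as stated.

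For competitors with short cycles, your inference ``no non-trivial minimal cut below $j_0$, hence locally tree-like at the relevant scale'' is also unsupported. The missing idea is to compare at the scale of the smaller girth. If $G'$ has girth $g'<g$, both graphs have girth at least $g'$. Every minimal cut of $G\in\mathcal{D}_{n,m}(g)$ of size at most $g'(k-2)$ comes from a tree on at most $g'-1$ vertices. The embedding isomorphism at threshold $g'$ then makes the trivial parts agree through order $g'(k-2)$, and the non-trivial cut is supplied by a shortest cycle of $G'$. This is the role of Lemma~\ref{lem:non-trivial_cut_Sets}, whose hypotheses only require both graphs to have girth at least the chosen threshold.

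Two smaller corrections:
\begin{enumerate}
\item Your $T_j$, which counts sets all of whose minimal cuts are trivial, is not universal beyond the first non-trivial index, because sets containing both kinds of cut migrate to $N_j$. The universal quantity is the number of $j$-sets containing \emph{some} trivial cut, as in $U^{(t)}$. This is harmless at $j_0$, where no $j_0$-set can contain two distinct minimal cuts.
\item In Step 1, cycle-containing sides need not have at most $g$ vertices, since $|\partial(X)|\le n(X)(k-2)$ is only an upper bound. This does not break the plan, because such cuts only feed $N_j$.
\end{enumerate}
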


By generating regular graphs with high girth and verifying the absence of non-trivial cut sets, we can significantly narrow the search space for the most reliable graphs near zero. Techniques from \cite{meringer1999fast}, \cite{mckay1998fast}, and \cite{brinkmann1995smallest} enable the systematic generation of all regular graphs with a specified girth. Additionally, a comprehensive collection of cubic graphs is available on the House of Graphs website\footnote{https://houseofgraphs.org/meta-directory/cubic} \cite{coolsaet2023house}. Although the number of graphs with the highest girth becomes substantial for larger graphs, it remains relatively small for smaller graphs.

Figure~\ref{fig:most_reliable_graphs} depicts the most reliable graphs with fewer than 20 redundant edges, which includes numerous graphs with maximal girth. After verifying the absence of non-trivial cut sets in these graphs, we conclude that they exhibit superior reliability near zero.

Furthermore, we assert that these graphs are the sole candidates for uniformly most reliable graphs, representing significant progress in their identification. Thus far, only graphs with fewer than seven redundant edges have been confirmed as uniformly most reliable \cite{boesch1991existence,wang1994proof,romero2017building,Rela2017PetersenGI,canale2019building}. Additionally, it has been postulated in \cite{bourel2019building} that the Cantor–Möbius graph also qualifies as a uniformly most reliable graph.

\begin{figure}
    \centering
    \includegraphics[width=0.75\linewidth]{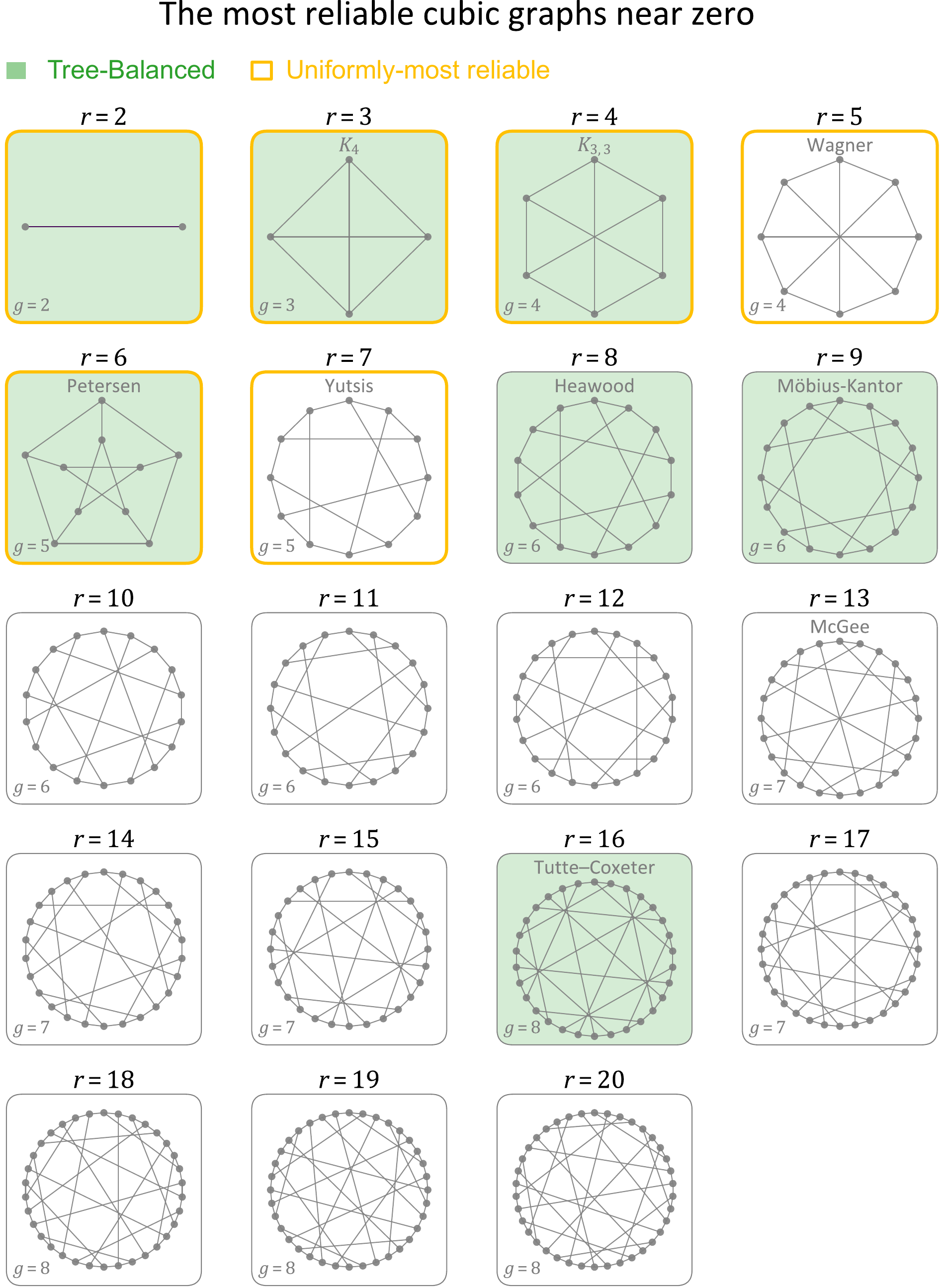}
\caption{Most reliable graphs near zero with a given redundancy $r$, and number of nodes $n=2(r-1)$. $g$ denotes the girth of the graph. Green graphs are Tree-Balanced, and gold graphs are previously proven to be uniformly most reliable.}
\label{fig:most_reliable_graphs}
\end{figure}

\subsubsection{Reliable sparse graphs}
\label{subsec:sparse}

In this subsection, we focus on the case of sparse, reliable graphs. A graph is called \emph{sparse} if it has $n$ vertices and $m$ edges satisfying $m < 1.5n$. This threshold is motivated by the fact that 3-regular graphs require exactly $1.5n$ edges. Thus, sparse graphs lie below the minimal edge count needed for 3-regularity, and therefore have edge-connectivity at most 2.

As mentioned previously, existing results suggest that graphs minimizing $b_2$ have a \textit{structure graph} that is both 3-edge-connected and 3-regular, with chain lengths that differ by at most one \cite{wang1997structure,landgren2024newframeworkidentifyingreliable}.

Constructing a reliable sparse graph with non-uniform chain lengths involves two key stages: identifying a reliable structure graph and determining the placement of the long chains within it. Placing two or more long chains within the same cut set of the structure graph increases the probability of failure for that cut. Since the low-order minimal cut sets in reliable cubic graphs typically consist of closely connected edges, our objective is to maximize the pairwise distances between the long chains in the structure graph.

Nevertheless, regular graphs with low girth often exhibit higher diameter, as the neighborhood of each edge contains relatively few edges. This leads to a trade-off between two competing requirements: using a structure graph with high girth, and maintaining sufficient distance between long chains. 

Fortunately, when the chains are long enough, the influence of their placement becomes negligible. Consequently, for a constant $r$ and a large value of $n$, the uniformly most reliable sparse graph must have the most reliable structure graph near zero. Therefore, methods for constructing reliable cubic graphs can be applied to produce highly reliable sparse graphs with long chains.

\subsubsection*{Reliability decomposition}

We begin by decomposing the unreliability polynomial to express the reliability of the sparse graph using the combinatorial language of its structure graph.

The number of long chains in the graph depends on the parameters $n$ and $r$. The redundancy of the structure graph is equal to that of the original graph. In the corresponding cubic-structure graph, each structure vertex is incident to $1.5$ times the number of chains. Therefore, the structure graph consists of $2(r - 1)$ structure vertices and $3(r - 1)$ chains. Assuming the chains are distributed evenly, the remaining $n - 2(r - 1)$ vertices are divided among the $3(r - 1)$ chains.

Let $c, \lambda \in \mathbb{N}$ be defined by
\begin{equation}\label{eq:c_lambda}
n - 2(r - 1) = (c - 1) \cdot 3(r - 1) + \lambda,
\end{equation}
with $0 \leq \lambda < 3(r - 1)$. Here, $c \geq 1$ corresponds to the number of edges in each short chain, and $\lambda$ denotes the number of long chains.

We classify failure events into two types: (1) Failure events in which each chain contains at most one failing edge, (2) Failure events in which at least one chain contains two or more failing edges. As these two event types are disjoint, we decompose the total unreliability as:
\begin{equation} \label{eq:ATR_composed}
U_G = U_G^{(1)} + U_G^{(2)},
\end{equation}
where $U_G^{(i)}$ denotes the probability of a type $(i)$ failure event.

The contribution $U_G^{(2)}$ depends only on the chain lengths and not on the structure graph. Specifically, the probability that a chain $e$ of length $c_e$ has at most one failing edge is given by $q^{c_e} + c_e p q^{c_e - 1}$. Therefore, if $S(G)$ denotes the structure graph of $G$, then:
\begin{equation}
U_G^{(2)} = 1 - \prod_{e \in E(S(G))} \left(q^{c_e} + c_e p q^{c_e - 1}\right).
\end{equation}

In contrast, $U_G^{(1)}$ depends on both the structure graph and the placement of long chains. Let $b_k^{(1)}$ be the number of type (1) $k$-cut sets in $S(G)$. Then:
\begin{equation}
\label{eq:U_G1}
U_G^{(1)} = \sum_{k = 0}^{3(r - 1)} b_k^{(1)} p^k (1 - p)^{m - k}.
\end{equation}

Since all sparse graphs in $\mathcal{A}_2$ have cubic structure with the same number of chains and have chain lengths differing by at most one, they yield the same value for $U_G^{(2)}$. Hence, minimizing total unreliability reduces to minimizing $U_G^{(1)}$.

To further analyze $U_G^{(1)}$, note that each type (1) $k$-cut set in the sparse graph arises from a $k$-cut set $X$ of the structure graph. Suppose $X$ contains exactly $\ell$ long chains. Then $X$ induces
\begin{equation}
\label{eq:gamma_X}
\gamma^{(X)} = c^{k - \ell}(c + 1)^{\ell}
\end{equation}
distinct type (1) cut sets in the sparse graph, since we need to choose exactly one failing edge in each chain of $X$. Let $\Gamma_k$ denote the set of all $k$-cut sets in the structure graph. Expanding~\eqref{eq:gamma_X} above using the binomial theorem gives:
\begin{equation}
\begin{split}
\gamma^{(X)} &= \sum_{j = 0}^{\ell} \binom{\ell}{j}\, c^{k - j}\\
b_k^{(1)} &= \sum_{X \in \Gamma_k} \gamma^{(X)}
\end{split}
\end{equation}

Define $\Gamma_{k,\ell}$ as the subset of $\Gamma_k$ containing cut sets with exactly $\ell$ long chains. Grouping terms by the number of long chains selected yields:
\begin{equation}
\begin{split}
b_k^{(1)} &= \sum_{j = 0}^{k} \gamma_{k,j} \, c^{k - j}\\
\gamma_{k,j} &= \sum_{\ell = j}^{k} |\Gamma_{k,\ell}| \, \binom{\ell}{j}
\end{split}
\end{equation}
In other words, $\gamma_{k,j}$ counts the number of pairs $(X, L)$ where $X \in \Gamma_k$ is a $k$-cut set of the structure graph and $L \subseteq X$ is a subset of $j$ long chains. In particular, $\gamma_{k,0} = b_k(S(G))$, the number of $k$-cut sets in the structure graph $S(G)$.

Substituting into~\eqref{eq:U_G1}, we obtain:
\begin{equation}
\label{eq:U_G1_formula}
U_G^{(1)} = \sum_{k = 0}^{3(r - 1)} \sum_{j = 0}^{k} \gamma_{k,j} \cdot c^{k - j} p^k (1 - p)^{m - k}.
\end{equation}

\subsubsection*{Extension of the Coefficient Comparison Lemma}

When $c$ is large, the terms with smaller values of $j$ contribute more significantly to $b_k^{(1)}$, due to the dominance of higher powers of $c$. Therefore, minimizing $b_k^{(1)}$ in this regime requires a lexicographic minimization of the sequence $(\gamma_{k,0}, \gamma_{k,1}, \ldots, \gamma_{k,k})$. This observation naturally leads to an extension of the coefficient compression lemma.

\begin{lemma}[Extension of Coefficient Comparison]
\label{lem:Extension of coefficients comparison}
For each $r$, there exists $n_0$ such that for all $n > n_0$, the uniformly most reliable sparse graph near zero with $n$ vertices and $r$ redundant edges minimizes the coefficients $\gamma_{k,j}$ in lexicographic order over the pairs $(k,j)$. That is, the sequence
\[
\begin{tikzpicture}[every node/.style={anchor=center}]
  \matrix (m) [matrix of math nodes, row sep=0.4em, column sep=2em] {
    \gamma_{1,0} & \gamma_{1,1} &           &           \\
    \gamma_{2,0} & \gamma_{2,1} & \gamma_{2,2} &         \\
    \gamma_{3,0} & \gamma_{3,1} & \gamma_{3,2} & \gamma_{3,3} \\
    \dots     \\
  };

  \draw[-{Stealth}] (m-1-1) -- (m-1-2);

  \draw[-{Stealth}] (m-1-2) -- (m-2-1);

  \draw[-{Stealth}] (m-2-1) -- (m-2-2);
  \draw[-{Stealth}] (m-2-2) -- (m-2-3);

  \draw[-{Stealth}] (m-2-3) -- (m-3-1);

  \draw[-{Stealth}] (m-3-1) -- (m-3-2);
  \draw[-{Stealth}] (m-3-2) -- (m-3-3);
  \draw[-{Stealth}] (m-3-3) -- (m-3-4);
\end{tikzpicture}
\]
is minimized entry by entry in the specified traversal order.
\end{lemma}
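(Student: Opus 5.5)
We argue via the decomposition \eqref{eq:ATR_composed}. A graph that is most reliable near zero lies in $\mathcal{A}_2$. By the results recalled in Subsection~\ref{subsec:prev_res}, every graph in $\mathcal{A}_2$ has a 3-connected cubic structure graph with $2(r-1)$ structure vertices and $3(r-1)$ chains, and its chain lengths differ by at most one. Hence $c$ and $\lambda$ are determined by $n$ and $r$ through \eqref{eq:c_lambda}, and $U_G^{(2)}$ is the same for every candidate. Comparing two candidates $G,H$ near zero therefore reduces to comparing $U_G^{(1)}$ and $U_H^{(1)}$. Both are written in the same Bernstein basis $p^k(1-p)^{m-k}$ with $m$ common, so by Lemma~\ref{lem:coefficients_comparison}(1) the comparison is the lexicographic comparison of the vectors $(b_k^{(1)})_k$. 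By \eqref{eq:U_G1_formula}, $b_k^{(1)}=\sum_{j=0}^{k}\gamma_{k,j}c^{k-j}$.

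The key step is to show that, for $c$ large, the order of $b_k^{(1)}$ agrees with the lexicographic order of $(\gamma_{k,0},\dots,\gamma_{k,k})$. The $\gamma_{k,j}$ are nonnegative integers bounded independently of $n$. The structure graph has $3(r-1)$ chains, so $|\Gamma_{k,\ell}|\le\binom{3(r-1)}{k}$ and
\[
\gamma_{k,j}\le 2^{k}\binom{3(r-1)}{k}\le 2^{6(r-1)}=:M .
\]
Suppose $G$ and $H$ agree on $\gamma_{k,0},\dots,\gamma_{k,j_0-1}$ and $\gamma^G_{k,j_0}<\gamma^H_{k,j_0}$. Then
\[
b_k^{(1)}(H)-b_k^{(1)}(G)\ \ge\ c^{k-j_0}-M\sum_{j>j_0}c^{k-j}\ \ge\ c^{k-j_0}-\frac{M\,c^{k-j_0-1}}{1-1/c},
\]
which is positive once $c>M+1$. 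Conversely, if the $\gamma_{k,\cdot}$ rows agree entirely, the $b_k^{(1)}$ agree. Thus for $c>M+1$ the map $(\gamma_{k,j})_j\mapsto b_k^{(1)}$ is strictly order-preserving from the lexicographic order to the integers, uniformly over all candidates.

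By \eqref{eq:c_lambda}, $c\ge (n-2(r-1))/(3(r-1))$, so $c>M+1$ holds as soon as $n>n_0:=2(r-1)+3(r-1)(M+2)$. We then chain the two lexicographic orders. Lexicographic minimisation of $(b_1^{(1)},b_2^{(1)},\dots)$ means minimising $b_1^{(1)}$ first, then $b_2^{(1)}$ among the survivors, and so on. At each $k$, minimising $b_k^{(1)}$ is equivalent to lexicographically minimising the row $(\gamma_{k,0},\dots,\gamma_{k,k})$. The composition is exactly the traversal order displayed in the statement. Since a uniformly most reliable graph is in particular most reliable near zero, it must minimise the $\gamma_{k,j}$ entry by entry in that order.

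The main obstacle is the justification that the near-zero comparison depends only on $b^{(1)}$. This requires two facts: the previously cited characterisation of $\mathcal{A}_2$ (cubic 3-connected structure with balanced chains), and the observation that $U^{(1)}$ is itself a polynomial whose Bernstein coefficients are exactly the $b_k^{(1)}$. For the latter, $U_G^{(2)}$ must be rewritten in the same basis, where it is identical for all candidates, so it cancels in the difference $U_G-U_H$. A second point needing care is that the bound $M$ must be uniform in $n$. This holds because the structure graph has only $3(r-1)$ chains regardless of $n$, which is precisely why $n_0$ depends only on $r$.
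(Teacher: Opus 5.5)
Your proof is correct and follows the same route as the paper. Since $U_G^{(2)}$ is the same for every graph in $\mathcal{A}_2$, you reduce to $U_G^{(1)}$, compare the $b_k^{(1)}$ lexicographically via Lemma~\ref{lem:coefficients_comparison}, and observe that for large $c$ the order of $b_k^{(1)}=\sum_j\gamma_{k,j}c^{k-j}$ agrees with the lexicographic order of the row $(\gamma_{k,0},\dots,\gamma_{k,k})$. Your explicit uniform bound $M=2^{6(r-1)}$ and the resulting $n_0$ make rigorous the paper's informal ``for $c$ sufficiently large'' step. That step needs this care because $c$ grows with $n$ while the $\gamma_{k,j}$ stay bounded independently of $n$.
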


\begin{proof}
For each $k$, we can write $b_k^{(1)} = \sum_{j = 0}^{k} \gamma_{k,j} c^{k - j}$. When $c$ is sufficiently large, the dominant contribution to $b_k^{(1)}$ comes from the leading coefficients in the lexicographic sequence $(\gamma_{k,0}, \gamma_{k,1}, \dots, \gamma_{k,k})$. Therefore, minimizing $b_k^{(1)}$ for large $c$ is equivalent to lex-minimizing this sequence.

According to the Coefficient Comparison Lemma~\ref{lem:coefficients_comparison}, minimizing $U_G^{(1)}$ near zero requires minimizing the sequence $(b_1^{(1)}, b_2^{(1)}, \dots, b_{3(r - 1)}^{(1)})$ in lexicographic order. It follows that minimizing $U_G^{(1)}$ near zero requires minimizing the full sequence of $\gamma_{k,j}$ coefficients in the lexicographic order over $(k, j)$.
\end{proof}

The coefficients $\gamma_{k,j}$ depend only on the structure graph and the location of long chains; they are independent of the actual chain lengths. It follows that, for any fixed redundancy level and number of long chains, there exists a combination of structure graph and long chain placement that yields the most reliable graph near zero for all large values of $n$.

\begin{definition}[Marked structure]
A \emph{marked structure graph} is a pair consisting of a cubic graph $G$ and a subset $L \subseteq E(G)$ of edges designated as long chains.
\end{definition}

\begin{definition}[Asymptotically most reliable marked structure]
A marked cubic structure graph with $r$ redundant edges and $\lambda$ long chains is called \emph{asymptotically most reliable} if it minimizes the coefficients $\gamma_{k,j}$ in lexicographic order over $(k,j)$ among all cubic marked structures with the same redundancy and number of long chains.
\end{definition}

\begin{theorem}[Most reliable large graphs near zero]
For each $r \geq 2$, there exists $n_0(r)$ such that, for all $n > n_0$, the most reliable graph near zero with $n$ vertices and $r$ redundant edges has the asymptotically most reliable marked structure with $r$ redundant edges and $\lambda$ long chains, where $\lambda$ satisfies
\[
n = 2(r-1) + (c-1)\cdot 3(r-1) + \lambda,
\]
for $0 \leq \lambda < 3(r-1)$.
\end{theorem}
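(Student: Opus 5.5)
The plan is to reduce the statement to the Extension of Coefficient Comparison (Lemma~\ref{lem:Extension of coefficients comparison}), after first pinning down the shape of any graph that is most reliable near zero. Fix $r\ge 2$.

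\textbf{Step 1: reduction to $\mathcal{A}_2$.} A most reliable graph near zero lies in $\mathcal{A}_m\subseteq\mathcal{A}_2$. For $n$ large relative to $r$ (so that $m<1.5n$), I invoke the result of \cite{wang1997structure} recalled in Subsection~\ref{subsec:prev_res}. It says that every graph in $\mathcal{A}_2$ has a $3$-connected cubic structure graph whose chain lengths differ by at most one. Counting then gives $2(r-1)$ structure vertices and $3(r-1)$ chains. By \eqref{eq:c_lambda}, exactly $\lambda$ of these chains are long (length $c+1$) and the rest are short (length $c$), where $c\to\infty$ as $n\to\infty$. So every candidate in $\mathcal{A}_2$ is determined by a marked cubic structure $(S,L)$ with $|L|=\lambda$.

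\textbf{Step 2: comparing candidates.} Within $\mathcal{A}_2$, the term $U_G^{(2)}$ depends only on the multiset of chain lengths, so it is identical for all candidates. Hence comparing near zero reduces to lexicographically comparing the Bernstein coefficients $b_k^{(1)}=\sum_{j}\gamma_{k,j}c^{k-j}$ of $U_G^{(1)}$, since both parts are written with the same factor $p^kq^{m-k}$. Care is needed here: $b_k$ of $G$ mixes type (1) and type (2) contributions, but the type (2) contributions agree across candidates, so the differences $b_k(G)-b_k(H)$ equal $b_k^{(1)}(G)-b_k^{(1)}(H)$. Lemma~\ref{lem:coefficients_comparison} then applies directly.

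\textbf{Step 3: making the asymptotic step uniform in $c$.} This is the main obstacle. For two marked structures $(S,L)$ and $(S',L')$, the difference $b_k^{(1)}-b_k'^{(1)}$ is a polynomial in $c$ of degree at most $k\le 3(r-1)$. Its coefficients are the integers $\gamma_{k,j}-\gamma'_{k,j}$, which are bounded by a constant depending only on $r$ (for instance $2^{3(r-1)}\cdot 2^{3(r-1)}$). Let $(k_0,j_0)$ be the first index in the traversal order where the two $\gamma$-arrays differ, with $\gamma_{k_0,j_0}<\gamma'_{k_0,j_0}$. Then:
\begin{itemize}
\item for every $k<k_0$, the coefficients $b_k^{(1)}$ agree;
\item at $k_0$ the difference is $-c^{k_0-j_0}+O_r(c^{k_0-j_0-1})$, which is negative once $c>C(r)$.
\end{itemize}
There are finitely many marked cubic structures with $2(r-1)$ vertices and $\lambda<3(r-1)$ marks, so a single threshold $C(r)$ works for all pairs and all $\lambda$. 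Choosing $n_0(r)$ so that $n>n_0$ forces $c>C(r)$ and $m<1.5n$, we conclude that the lex order on $(b_k)$ among candidates coincides with the lex order on the $\gamma$-arrays.

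\textbf{Step 4: conclusion.} Let $G$ be most reliable near zero. Then $G$ is lex-minimal in $(b_k)$, so by Step 3 its marked structure lex-minimizes $(\gamma_{k,j})$ among cubic marked structures with redundancy $r$ and $\lambda$ marks. This is exactly the statement that $G$ has the asymptotically most reliable marked structure. Conversely, every such marked structure can be realized by placing long chains on $L$, so the minimum is attained.

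The delicate points I expect are checking that the $3$-connectivity constraint from Step 1 does not exclude the $\gamma$-minimizer, and handling ties. For the first, a $\gamma$-minimizer has no nontrivial $2$-cuts, because $\gamma_{2,0}$ is minimized before anything else. For ties, the theorem only asserts membership in the minimizing class, so ties cause no problem.
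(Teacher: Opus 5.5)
Your proposal is correct and follows the paper's route. The paper derives the theorem directly from Lemma~\ref{lem:Extension of coefficients comparison}: restrict to $\mathcal{A}_2$ (cubic $3$-connected structure with balanced chains), note that $U_G^{(2)}$ is common to all candidates, and use $b_k^{(1)}=\sum_j\gamma_{k,j}c^{k-j}$, so that for large $c$ the lexicographic order on $(b_k)$ coincides with the $(k,j)$-lexicographic order on the $\gamma_{k,j}$. Your Step~3 (a threshold uniform over the finitely many marked structures) and your check that a $\gamma$-minimizer is automatically $3$-edge-connected make explicit details that the paper's brief argument leaves implicit.
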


This theorem follows directly from Lemma~\ref{lem:Extension of coefficients comparison} and provides the main tool for identifying the most reliable graphs with constant redundancy and a large number of vertices.

We extend the reliability classes $\{\mathcal{A}_k\}$ to a two-parameter family $\{\mathcal{A}_{k,j}\}$, defined for $0 \leq k \leq 3(r - 1)$ and $0 \leq j \leq k$. The class $\mathcal{A}_{k,j}$ consists of all marked structure graphs that minimize $\gamma_{k,j}$ among those in $\mathcal{A}_{k,j-1}$ (for $j > 0$), or among those in $\mathcal{A}_{k-1,k-1}$ when $j = 0$. 

This recursive construction ensures that each class $\mathcal{A}_{k,j}$ contains all graphs that are lexicographically minimal in $\gamma_{k',j'}$ for all $(k',j') < (k,j)$ in lexicographic order. In particular, the asymptotically most reliable marked structure near zero lies in $\mathcal{A}_{3(r - 1),\,3(r - 1)}$.

Next, we show that when $r$ is fixed and $n$ is sufficiently large, there exist $p$ that the most reliable graph for this $p$ has a marked structure that also minimizes the coefficients $\{\gamma_{k,j}\}$ sequentially in \emph{transposed lexicographic order} on the index pairs $(j,k)$.

To see this, define $s = \frac{p}{1 - p} c$ and assume that $s$ is a small constant. Additionally, let $c = s^{-t}$ for some fixed $t > 1$, so that $c$ grows faster then $s^{-1}$. Under this substitution, the unreliability polynomial becomes
\begin{equation}
    U_G^{(1)} = \left(\frac{s^{-t}}{s^{-t}+s}\right)^m\cdot  \sum_{k = 0}^{3(r - 1)} \sum_{j = 0}^{k} \gamma_{k,j} \, s^{k + jt}.
\end{equation}

In Lemma~\ref{lem:transpose_coeff_comp} below, we show that for sufficiently large $t$, and small $s$, minimizing $U_G^{(1)}$ at the point $p = \frac{s}{c + s}$ corresponds to minimizing the coefficients $\gamma_{k,j}$ in transposed lexicographic order on $(j,k)$, that is:
\[
\gamma_{1,0}, \, \gamma_{2,0}, \, \dots, \, \gamma_{1,1}, \, \gamma_{2,1}, \, \dots, \, \gamma_{1,2}, \, \gamma_{2,2}, \, \dots
\]

Minimizing terms of the form $\gamma_{k,0}=b_k(S(G))$ is equivalent to reducing the $k$-th coefficient of the structure graph. Therefore, under the transpose lexicographic minimization over $(j,k)$, the uniformly most reliable graph, when $r$ is fixed, and $n$ is large, must have a structure graph that minimizes all $b_k$ sequentially, and hence corresponds to the most reliable cubic graph with $2(r - 1)$ vertices. This observation justifies the search for the most reliable cubic graphs near zero.

To minimize $\gamma_{k,1}$, we observe that if the structure graph contains only trivial cut sets of size at most $k$, then, by Lemma~\ref{lem:cut_set_contain_two_edges}, which we prove later, each chain appears in the same number of such cut sets. In this case, the value of $\gamma_{k,1}$ is independent of the placement of the long chains.

Finally, to minimize $\gamma_{k,2}$, we observe that each trivial cut set involves edges that lie within a distance of at most $ k-2$. In Lemma~\ref{lem:cut_set_contain_two_edges}, which will be proved later, we also show that every pair of chains at distance less than $k-2$ appears in the same number of trivial cut sets of size $k$.

As a result, minimizing $\gamma_{k,2}$ sequentially for $k$ is equivalent to minimizing the number of pairs of long chains whose distance in the structure graph is exactly $k - 2$.

However, for a given redundancy and a large number of vertices, several of the structural properties associated with uniformly most reliable graphs can be mutually conflicting. For instance, in graphs with high girth, each chain has more edges within its $d$-neighborhood, which increases the number of nearby chain pairs and makes it more difficult to maintain large distances between long chains.

In Subsection~\ref{subsec:unequal_chains}, we present additional properties of uniformly most reliable graphs for a large number of vertices. In some cases, these properties are incompatible, and we use this to argue that a uniformly most reliable graph cannot exist. In other cases, the properties are consistent and help reduce the search space for candidate graphs.

Finally, in Subsection~\ref{subsec:algorithm}, we summarize these results and present an algorithm for identifying the uniformly most reliable graph in the asymptotic regime. We then use the database of cubic graphs described in Subsection~\ref{subsec:cubic_db} to narrow the search space for asymptotically uniformly most reliable graphs in certain cases, and to rule out their existence in others.

\subsubsection{The Non-Existence of the Uniformly Most Reliable Graph}
We now shift our focus to the behavior of sparse graphs near $p = 1$. In this regime, reliability is determined by the number of spanning trees, also known as the \emph{tree number}. We present a method for maximizing the tree number in sparse graphs with long chains and prove its correctness in Subsection~\ref{subsec:maximize_the_tree_number}.

In that analysis, the number of edges in a chain $e_i$ is expressed as $c + r_i$, where $r_i$ may vary (including negative values). It is shown that the tree number can be expressed as a polynomial in the variable $c$, and as $c$ increases, the objective becomes to minimize the polynomial’s coefficients from the highest degree to the lowest.

This leads to two key insights. First, for sufficiently large $c$, the sparse graph that maximizes the tree number must also have a structure graph that maximizes the tree number. Second, there exist cases in which unbalanced chain lengths (i.e., $r_i\notin\{0,1\}$) yield a higher tree number when the number of vertices is large.

We then evaluate each of the most reliable 3-regular graphs near zero to assess whether they can be the structure of a uniformly most reliable sparse graph. This analysis reveals several cases in which a uniformly most reliable graph does not exist, yielding the strongest known contradictions in this setting.

\begin{definition}[Tree-Balanced Graphs]
Let $G$ be a graph, and let $T(G)$ denote its tree number. We say that $G$ is \emph{tree-balanced} if for every pair of edges $e_1, e_2 \in E(G)$, it holds that $T(G - e_1) = T(G - e_2)$. 

Here, $T(G - e)$ denotes the tree number of the graph $G$ with edge $e$ removed.
\end{definition}

If the structure graph is not tree-balanced, then for sufficiently large $n$, one can increase the overall tree number by lengthening the chains corresponding to edges with higher $T(G - e)$ values and shortening those with lower values. As a result, the corresponding graph achieves a higher tree number than the one with uniform chain lengths.

Lemma~\ref{lem:non_UMRG_not_t_balanced} formalizes this: it asserts that non-tree-balanced structure graphs cannot correspond to uniformly most reliable graphs when the number of vertices is sufficiently large.

Figure~\ref{fig:compare_matching} illustrates this phenomenon using a graph in $\mathcal{G}_{8,12}$. We compare the most reliable graph near zero with a graph whose chain lengths differ by more than one but achieves a higher tree number.

Finally, Figure~\ref{fig:most_reliable_graphs} summarizes the cases where a uniformly most reliable graph does not exist. For each graph that is most reliable near zero, we verify whether it is tree-balanced.

\begin{figure}
    \centering
    \includegraphics[width=1\linewidth]{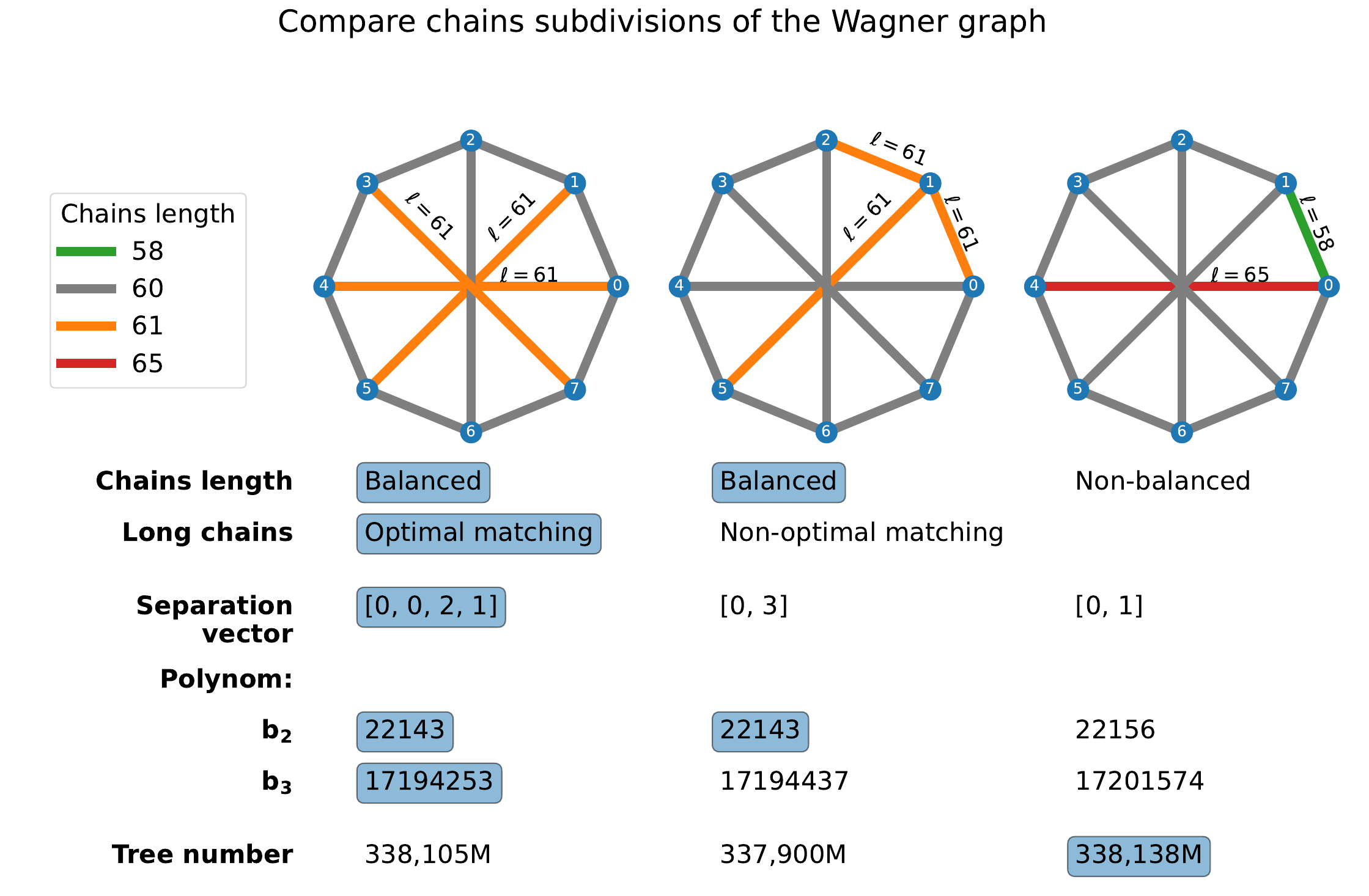}
    \caption{Comparison of different chain configurations on the Wagner graph, which is the most reliable graph near zero with $ 8 $ vertices and $ 12 $ edges. In the first and second graphs, the chain lengths differ by at most one. The first graph features long chains forming a matching with the optimal separation vector, making it the more reliable graph near zero. In contrast, the long chains in the second graph do not form a matching. In the third graph, all chains are of equal length except for one very long chain, which maximizes the tree number when removed, and one very short chain, which minimizes the tree number when removed. As predicted, the tree number of the third graph is the highest among the presented graphs.}

    \label{fig:compare_matching}
\end{figure}

\section{Reliable regular graphs}
\label{sec:regular}
This section rigorously establishes the paper's main result that regular graphs with high girth exhibit superior reliability near zero. In a $ k $-regular graph with high girth, the neighborhood of each vertex forms a $ k $-regular tree structure (see Figure~\ref{fig:embedding}), providing numerous alternative paths. For graphs with a girth of at least $ g $, it is possible to establish an isomorphism among connected subgraphs containing fewer than $ g $ vertices. By selecting the edges that connect these subgraphs to their complements, we define what are referred to as "trivial" cut sets. These cut sets are considered trivial in graphs with a girth of at least $ g $. Conversely, a cycle with $ g $ vertices induces another type of cut set, referred to as a "non-trivial" cut set. Therefore, a graph that is free of non-trivial cut sets, particularly one with higher girth, demonstrates improved reliability near zero. Some of our methods are based on expansion arguments. We chose to use direct terminology to keep the paper accessible beyond combinatorics.

To prove the main result, we establish an isomorphism between collections of trivial cut sets. We denote the collection of all minimal cut sets in $G$ by $\mcs_G$. The event ``all the edges of an edge set $X$ failed'' by $F_X$. One way of computing the unreliability of a graph is by using the inclusion-exclusion principle:
\begin{equation} \label{eq:inc_exc_principle_ATR}
    U_G = \Pr\left(\bigcup_{X \in \mcs_G} F_X\right) = \sum_{\emptyset\neq \mathcal{A} \subseteq \mcs_G}(-1)^{|\mathcal{A}|-1} \cdot p^{|\cup_{X \in \mathcal{A}} X|},
\end{equation}
Thus, if we can demonstrate an isomorphism between collections of trivial cut sets, we can conclude that a graph containing non-trivial cut sets is less reliable.

To establish this isomorphism, we proceed in three steps. First, we construct a local isomorphism between the neighborhoods of vertices. Next, we extend this isomorphism to encompass all trivial cut sets in the graphs. Finally, we demonstrate the isomorphism between the collections of trivial cut sets.

For convenience, we designate one of the vertices as a \emph{source vertex} and use it as a reference. Since this vertex can be chosen arbitrarily, the results remain valid for any selection of a source.

\subsection{Trivial Cut Sets}

In this subsection, we aim to establish both local and global isomorphisms. Let $ \mcs_G(\ell) $ denote the set of all minimal cut sets containing at most $ \ell $ edges. Since each minimal cut set $ X $ partitions the graph into exactly two subgraphs, we define the \textit{skeleton} of the cut set as the smaller connected component resulting from the failure of $ X $. If both resulting subgraphs are of equal size, the skeleton is taken to be the connected component containing the source vertex.

Conversely, for a connected subgraph $ X $ in $ G $, we define $ \partial(X) $ as the set of edges that connect $ X $ to its complement $ G - X $. Note that while $ \partial(X) $ always forms a cut set, it does not necessarily constitute a minimal cut set.

Figure~\ref{fig:cut_sets} illustrates minimal cut sets and their corresponding skeletons. As stated in Proposition~\ref{prop:The_size_of_the_induced_cut_set}, the size of the induced cut set is determined by the number of vertices and edges in the subgraph.

We define the \emph{trivial cut sets} of a graph $G$ with a girth $g$ as all the minimal cut sets induced from a tree with at most $(g-1)$ vertices. We denote the set of all trivial cut sets in $G$  as $\tmcs_G$ and all trivial cut sets with at most $\ell$  edges as $\tmcs_G(\ell)$. We also define the set of all the connected subgraphs in $G$ with at most $\ell$ vertices as $\sigma_G(\ell)$  and the set of all those subgraphs that intersect a vertex $v$ as $\sigma_G(\ell;v)$.

Because small subtrees uniquely determine trivial cut sets, our goal is to show that collections of small subtrees are isomorphic in graphs with the same girth. In this section, the comparison is carried out relative to a distinguished source vertex, which serves as a reference point for the isomorphism. Consequently, we work with rooted graphs. A \emph{rooted graph} is a graph together with a distinguished vertex, called the root. An isomorphism between rooted graphs is required to map the root to the root, thereby preserving the rooted structure. In particular, two rooted subgraphs are considered isomorphic only if the position of the root within the subgraph is preserved.
This rooted framework is used only in the present sparse-graph analysis; in later sections, the corresponding isomorphisms are established without reference to a distinguished vertex.

\begin{lemma}[Local Isomorphism]
   \label{prop:homomorphism-local}
    
   Let $ G_1 $ and $ G_2 $ be two $ k $-regular graphs with the same number of vertices and edges and a girth of at least $ \ell $. Let $ v_1 \in G_1 $ and $ v_2 \in G_2 $ be two vertices. Then, there exists a rooted isomorphism between 
   $ \sigma_{G_1}(\ell-1; v_1) $ and $ \sigma_{G_2}(\ell-1; v_2) $.
\end{lemma}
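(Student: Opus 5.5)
The plan is to route both families through the infinite $k$-regular tree $T_k$, rooted at a vertex $o$. I will show that for each $i$ the collection $\sigma_{G_i}(\ell-1;v_i)$ is in root-preserving bijection with $\sigma_{T_k}(\ell-1;o)$, the connected subgraphs of $T_k$ with at most $\ell-1$ vertices containing $o$. The bijection will preserve inclusion and isomorphism type, and composing the two bijections gives the rooted isomorphism. The natural tool is the universal covering map (the non-backtracking walk tree) $\pi_i\colon T_k\to G_i$ with $\pi_i(o)=v_i$. Because $G_i$ is $k$-regular, $\pi_i$ is a local isomorphism: it maps the $k$ edges at each tree vertex bijectively onto the $k$ edges at its image.

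The first step is to show that $\pi_i$ is injective on every connected subtree $Y\ni o$ of $T_k$ with at most $\ell-1$ vertices. Suppose two distinct vertices $x,y\in Y$ had the same image. The tree path from $x$ to $y$ lies in $Y$ and has at most $\ell-2$ edges. Its image is a closed non-backtracking walk in $G_i$, since $\pi_i$ is a local isomorphism and tree paths do not backtrack. Such a walk contains a cycle of length at most $\ell-2<\ell$, contradicting girth at least $\ell$. Hence $\pi_i|_Y$ is injective on vertices, and so on edges. The image $\pi_i(Y)$ is then a connected subgraph containing $v_i$ with $|Y|$ vertices.

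The second step settles what "subgraph" means here. Connected subgraphs of $G_i$ with at most $\ell-1$ vertices contain no cycle, since any cycle has at least $\ell$ vertices. So every element of $\sigma_{G_i}(\ell-1;v_i)$ is a tree, and whether we use induced or arbitrary connected subgraphs, a set of at most $\ell-1$ vertices induces a forest. This lets us work with trees throughout. Surjectivity then comes from lifting. Given a subtree $X\ni v_i$ of $G_i$, build its lift $\tilde X\ni o$ by BFS from $v_i$: at each step, follow the unique tree edge above the corresponding graph edge, which exists and is unique by the local isomorphism. Because $X$ is a tree, the lift is well defined and $\pi_i(\tilde X)=X$. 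Uniqueness of the lift gives injectivity of $Y\mapsto\pi_i(Y)$ on $\sigma_{T_k}(\ell-1;o)$.

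The final step is to set $\Phi=\pi_2\circ(\pi_1)^{-1}$, which sends each $X\in\sigma_{G_1}(\ell-1;v_1)$ to $\pi_2(\tilde X)$. This map is bijective and root-preserving. It is also monotone under inclusion, since lifts of nested subtrees are nested. Restricted to each $X$, it is a graph isomorphism $X\cong\Phi(X)$. If the intended notion is a single isomorphism of the ball of radius $\ell-2$ around $v_i$, this also follows: the ball lifts injectively by the same girth argument, because two paths from $v_i$ of length at most $\ell-2$ give a closed walk of length at most $2(\ell-2)$. That bound can exceed the girth, which is why I phrase the statement via subgraphs, where path lengths within $Y$ are at most $\ell-2$.

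The main obstacle is this injectivity/lifting step: the girth bound must exactly cover paths inside a subtree of $\ell-1$ vertices, and not the full ball. I would handle it by arguing only within each subtree and making consistency across subtrees come from the fixed covering map.
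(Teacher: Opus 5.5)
Your argument is correct, but it takes a different and more explicit route than the paper. The paper runs a depth-first search from $v_1$ and from $v_2$, stopping after $\ell-1$ vertices. It asserts from the girth bound that no vertex repeats and that every subgraph has a unique DFS representation, and then pairs subgraphs of $G_1$ and $G_2$ that have identical DFS sequences. You instead fix covering maps $\pi_i\colon T_k\to G_i$ and prove that each one induces a bijection $\sigma_{T_k}(\ell-1;o)\to\sigma_{G_i}(\ell-1;v_i)$. Injectivity holds because a tree path inside a subtree with at most $\ell-1$ vertices has at most $\ell-2$ edges and projects to a closed non-backtracking walk, which would contain a cycle shorter than $\ell$. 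Surjectivity and uniqueness come from path lifting, using that connected subgraphs on at most $\ell-1$ vertices are trees.

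The underlying idea is the same in both: below the girth, neighbourhoods look like the $k$-regular tree. Your formulation, however, supplies what the DFS sketch leaves implicit. ``Identical DFS sequences'' in two different graphs only makes sense after labelling the $k$ edges at every vertex and fixing the order in which children are explored, since a tree generally admits many DFS orders. That labelling is exactly the data of a covering map from $T_k$. The paper also does not spell out why its pairing is onto, or why the girth bound covers every path involved; your lifting and path-length arguments handle both. As a bonus, you get that the correspondence is monotone under inclusion.

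One correction: drop the aside about the ball of radius $\ell-2$, which does not lift injectively in general. For $\ell\ge 4$, suppose $v_i$ lies on a cycle of length $\ell$. A vertex of that cycle farthest from $v_i$ along it is then reached by two distinct paths of length at most $\lceil \ell/2\rceil\le \ell-2$. Your own estimate $2(\ell-2)\ge \ell$ is exactly why the girth argument breaks down at that radius. This does not affect the lemma, which concerns only subgraphs with at most $\ell-1$ vertices, where your argument is sound.
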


\begin{proof}
   We establish the isomorphism by performing a depth-first search (DFS) on each graph, starting from $ v_1 $ in $ G_1 $ and $ v_2 $ in $ G_2 $, and stopping after visiting $ \ell-1 $ vertices.

   Since the girth of both graphs is at least $ \ell $, each vertex in the DFS traversal appears at most once; otherwise, a cycle of fewer than $ \ell $ vertices would exist, contradicting the girth condition. 

   Suppose, for contradiction, that a subgraph is represented by two distinct DFS sequences. This would imply the presence of a cycle with fewer than $ \ell $ vertices, again contradicting the girth condition. Thus, each subgraph in $ \sigma_{G_1}(\ell-1; v_1) $ and $ \sigma_{G_2}(\ell-1; v_2) $ has a unique representation through the DFS process.

   Finally, we construct the required isomorphism by pairing subgraphs in $ \sigma_{G_1}(\ell-1; v_1) $ and $ \sigma_{G_2}(\ell-1; v_2) $ based on identical DFS sequences.
\end{proof}

Building on the local isomorphism established in Lemma~\ref{prop:homomorphism-local}, we now extend this result to a global isomorphism for all connected subgraphs with at most $ (\ell-1) $ vertices.

\begin{lemma}[Global Isomorphism]
     \label{lem:Global-homomorphism3}
    
    Let $ G_1,G_2\in\mathcal{G}_{n,m}$ be two $ k $-regular graphs with a girth of at least $ \ell $. Let $ s_1 $ and $ s_2 $ be the source vertices of $ G_1 $ and $ G_2 $, respectively. Then, there exists a rooted isomorphism $ h $ from
    $ \sigma_{G_1}(\ell-1) $ to $\sigma_{G_2}(\ell-1)$. 
\end{lemma}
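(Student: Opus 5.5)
The plan is to build the global bijection by gluing together the local bijections of Lemma~\ref{prop:homomorphism-local}, using a fixed bijection between the vertex sets as the framework. Since $G_1$ and $G_2$ have the same number $n$ of vertices, I will fix a bijection $\phi: V(G_1)\to V(G_2)$ with $\phi(s_1)=s_2$. For each vertex $v\in V(G_1)$, Lemma~\ref{prop:homomorphism-local} supplies a rooted isomorphism $h_v:\sigma_{G_1}(\ell-1;v)\to\sigma_{G_2}(\ell-1;\phi(v))$, namely the DFS-sequence pairing. The difficulty is that a subgraph $X\in\sigma_{G_1}(\ell-1)$ lies in $\sigma_{G_1}(\ell-1;v)$ for every $v\in X$, so the local maps overlap. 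I therefore need to assign each $X$ to one canonical vertex.

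To handle the overlap, I will fix a total order on $V(G_1)$, say numbering the vertices $1,\dots,n$ with $s_1$ first, and transport this order to $V(G_2)$ through $\phi$. For $X\in\sigma_{G_1}(\ell-1)$, let $\mathrm{r}(X)$ be its minimal vertex in this order. The naive definition would be $h(X)=h_{\mathrm{r}(X)}(X)$. For this to work, however, the image must have $\phi(\mathrm{r}(X))$ as its own minimal vertex, and the DFS pairing does not control the labels of the remaining vertices. So minimal-vertex rooting does not directly give a bijection.

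My fix is to count instead of gluing pointwise, which I expect to be the main obstacle. Because both graphs have girth at least $\ell$, every $X$ with at most $\ell-1$ vertices is a tree, and by Lemma~\ref{prop:homomorphism-local} its rooted shape at any vertex is an abstract rooted subtree of the infinite $k$-regular tree. I will partition $\sigma_{G_i}(\ell-1)$ by isomorphism type of (unrooted) tree $T$. Double counting the pairs $(X,v)$ with $v\in X$ and $X\cong T$ gives
\[
\sum_{X\cong T}|X| = \sum_{v} \#\{X\ni v : X\cong T\}.
\]
By the local isomorphism, the inner count depends only on $k$, $T$ and $\ell$, not on $v$ or on the graph. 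Since $|X|=|T|$ is constant, the number of copies of $T$ equals $n\cdot N_T/|T|$, where $N_T$ is the number of copies of $T$ containing a fixed vertex. This count is the same in $G_1$ and $G_2$, so any type-preserving bijection between the classes gives an isomorphism $h$ of the collections $\sigma_{G_1}(\ell-1)\to\sigma_{G_2}(\ell-1)$ that preserves the number of vertices, edges and components, and hence, by Proposition~\ref{prop:The_size_of_the_induced_cut_set}, the size of $\partial(X)$.

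For the rooted requirement, I will refine the classes using the source. Within each type $T$, I will split the copies into those containing $s_i$, further split by the rooted type at $s_i$, and those avoiding $s_i$. Lemma~\ref{prop:homomorphism-local}, applied at $s_1,s_2$, matches the first group exactly and respects the root positions. Subtracting, the groups avoiding the sources also have equal sizes, so I can pair them arbitrarily. Taking the union of these pairings gives the desired rooted isomorphism $h$. The step I would check most carefully is the claim that the local count $N_T$ is independent of the vertex, which relies on the DFS uniqueness argument of Lemma~\ref{prop:homomorphism-local}.
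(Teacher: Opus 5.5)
Your proposal is correct and is essentially the paper's argument: you use the local isomorphism plus double counting of (copy, vertex) incidences to show each tree type has the same number of copies in $G_1$ and $G_2$. You then match the copies through the sources via the local isomorphism at $s_1,s_2$ and pair the remaining, equal-sized classes arbitrarily. Your explicit count $nN_T/|T|$ and the refinement by rooted type at the source make precise what the paper states more informally, and the minimal-vertex gluing you set aside is not needed.
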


\begin{proof}
We begin with the isomorphism $h: \sigma_{G_1}(\ell-1; s_1) \to \sigma_{G_2}(\ell-1; s_2)$ guaranteed by the local isomorphism lemma. Our goal is to extend this to a global isomorphism $h: \sigma_{G_1}(\ell-1) \to \sigma_{G_2}(\ell-1)$.

Let $X$ be any tree in $\sigma_{G_1}(\ell-1)$. By the local isomorphism lemma, for every vertex $v$ in $G_1$, the number of subtrees isomorphic to $X$ rooted at $v$ equals the number of such subtrees rooted at the corresponding vertex in $G_2$. Moreover, each subtree with $t$ vertices appears in exactly $t$ vertex neighborhoods (once for each choice of root vertex), so the total number of subgraphs in $G_1$ isomorphic to $X$ is equal to the total number in $G_2$.

In particular, the number of such subgraphs containing the source vertices is the same in both graphs. It follows that the number of subgraphs isomorphic to $X$ that do not contain the source vertices is likewise equal.

We can therefore extend $h$ by matching each isomorphism class of subgraphs in $G_1$ with its counterpart in $G_2$, preserving the association of subgraphs rooted at $s_1$ and $s_2$.
\end{proof}

\subsection{Collections of Trivial Cut Sets}

Having established that trivial cut sets containing at most $ (g-1) $ vertices are isomorphic in any two $ k $-regular graphs with a girth of at least $ g $, we now extend this result to collections of such cut sets. Specifically, we establish an isomorphism between collections of trivial minimal cut sets by examining families of subtrees and comparing their embeddings across two graphs, capturing different unions of trivial cut sets. In the following subsection, these embeddings will serve as a basis for evaluating the reliability of high-girth graphs.

\begin{definition}[Subtree Embedding]
Let $G$ be a $k$-regular graph with girth $g$. A \emph{subtree embedding} $\varphi\subset\sigma_G$ is a finite collection of subtrees in $G$. The \emph{skeleton} of $\varphi$ is defined as the union of all subtrees, i.e., $\bigcup_{T\in\varphi} T$.

Each tree in the collection is assigned a unique color, and the source of $ G $ is also given a distinct color. The vertices in the skeleton of $ \varphi $ are then colored by the set of all colors corresponding to trees intersecting at that vertex.

A subtree embedding $ \varphi $ induces a cut set given by
\[
\partial(\varphi) = \cup_{T \in \varphi} \partial(T),
\]

Two subtree embeddings $\varphi,\psi\subset \sigma_G$ are considered \emph{equivalent} $\varphi\sim\psi$ if they contain the same number of trees and there exists a graph isomorphism between their trees. The equivalence classes under this relation $[\psi]$ are referred to as \emph{subtree collections}. Figure~\ref{fig:embedding} illustrates subtrees and their equivalence classes.

Since we are only interested in cut sets with at most $ g(k-2) $ edges, we define the set of all subtree embeddings that induce a cut set of at most $ \omega $ edges, for $ \omega \leq g(k-2) $, as:
\begin{equation}
\mathcal{E}_{G}(\omega)  =  \{\varphi \subseteq \sigma_{G} \mid |\partial(\varphi)| \leq \omega\}.
\end{equation}
\end{definition}

We say that subtree embeddings are \emph{joint} if at least two of their subtrees, induced by the corresponding cut sets, overlap. Formally, a subtree embedding $ \varphi \in \mathcal{E}_G(\omega) $ is a \emph{disjoint subtree embedding} if 
\[
\forall T_1\neq T_2\in\varphi:\quad
T_1\cap T_2=\emptyset\quad
\text{and}\quad
\partial(T_1)\cap \partial(T_2)=\emptyset\quad
\]
If the subtree embeddings are not disjoint, they are referred to as \emph{joint subtree embeddings}. 

Disjoint embeddings of a subtree collection maximize the size of the induced cut set. Furthermore, by Proposition~\ref{prop:The_size_of_the_induced_cut_set}, all disjoint embeddings of the same subtree collection have the same number of edges in the induced cut set. Figure~\ref{fig:embedding} illustrates examples of disjoint and joint embeddings.

\begin{figure}
    \centering
    \includegraphics[width=1\linewidth]{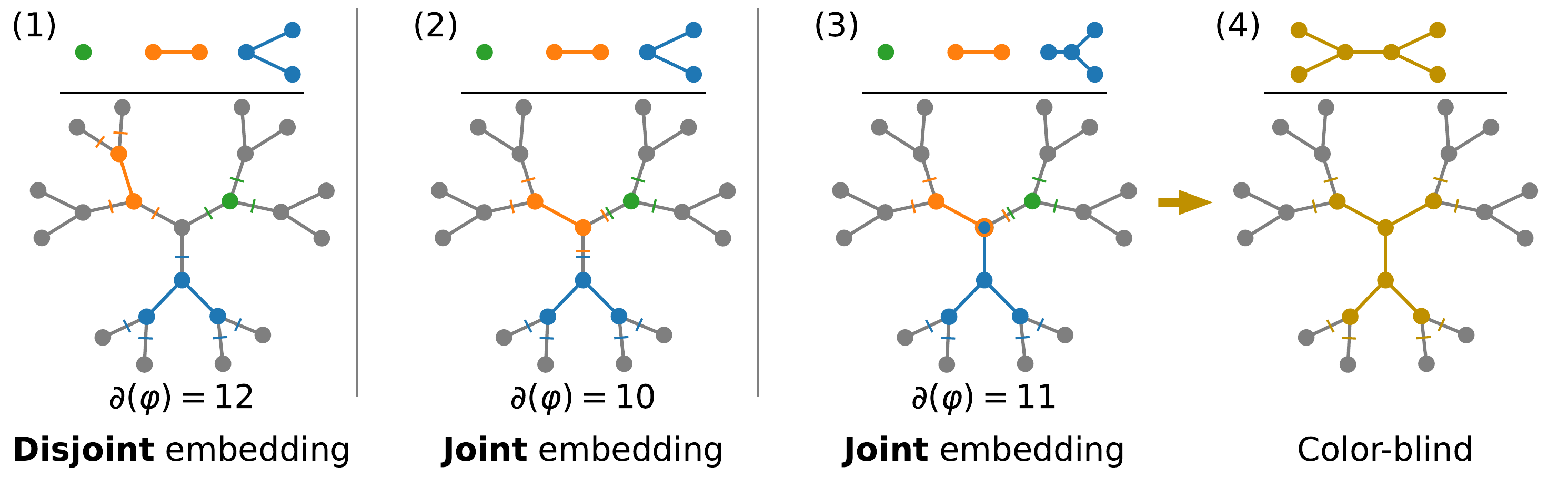}
    \caption{Illustration of subtree embeddings. (1) A disjoint subtree embedding. (2) A joint embedding with two overlapping edges. (3) A joint embedding with one overlapping edge. The upper rows represent the subtree collections, and $ \delta $ denotes the size of the induced cut set. (4) Demonstration of the color-blind principle from Lemma~\ref{lem:intersection-homomorphism} applied to the subtree embedding in (3).}

    \label{fig:embedding}
\end{figure}

The following lemma extends the global isomorphism result to establish an isomorphism between subtree embeddings in two graphs with high girth.

\begin{lemma}[Subtree Embedding Isomorphism]
\label{lem:intersection-homomorphism}
Let $ G_1 $ and $ G_2 $ be $ k $-regular graphs with the same number of vertices and edges, each having a girth of at least $ \ell $, and let $ s_1 \in V(G_1) $ and $ s_2 \in V(G_2) $ be their respective sources. Then, there exists a rooted coloring isomorphism 
\[
h: \mathcal{E}_{G_1}(\ell(k-2)) \to \mathcal{E}_{G_2}(\ell(k-2)).
\]
\end{lemma}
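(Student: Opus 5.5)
The plan is to reduce the statement to the Global Isomorphism Lemma~\ref{lem:Global-homomorphism3}. The key observation is that the skeleton of any embedding in $\mathcal{E}_{G}(\ell(k-2))$ is a forest with fewer than $\ell$ vertices in each component. Once that is established, the colored skeleton of an embedding can be transported from $G_1$ to $G_2$ along with its trees.

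\textbf{Step 1: skeleton size.} Let $\varphi\in\mathcal{E}_{G_1}(\ell(k-2))$ and let $X$ be a connected component of its skeleton $\bigcup_{T\in\varphi}T$, equipped with all edges of $G_1$ among its vertices. I would show that $n(X)\le \ell-1$ by contradiction.

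If $X$ contained a cycle, that cycle would have at least $\ell$ vertices. If $X$ were a tree with at least $\ell$ vertices, Proposition~\ref{prop:The_size_of_the_induced_cut_set} would give $|\partial(X)|\ge \ell(k-2)+2$. A cycle plus attached material gives $|\partial(X)|\ge \ell(k-2)$, with equality only when $X$ is exactly a girth cycle. So one must check that $\partial(\varphi)$ essentially contains $\partial(X)$, which holds because an edge leaving $X$ leaves every tree of $\varphi$ containing its endpoint.

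This is the step I expect to be the main obstacle. The boundary case $|\partial(\varphi)|=\ell(k-2)$ with a cycle skeleton must be excluded or handled, presumably by the convention that embeddings are collections of subtrees of $\sigma_G(\ell-1)$. Edges that join two trees of $\varphi$ without belonging to either also need care. I would first try to show that such an edge creates a cycle of length at most the combined diameter, which is below the girth, unless the cut-set bound is exceeded.

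\textbf{Step 2: each component maps rigidly.} Once every component is a tree on at most $\ell-1$ vertices, each component lies in $\sigma_{G_1}(\ell-1)$. By the girth condition (as in the DFS argument of Lemma~\ref{prop:homomorphism-local}), its induced subgraph is a tree embedded isomorphically into the infinite $k$-regular tree. The constituent trees of $\varphi$ are subtrees of that component, so they are determined by their vertex sets, and the coloring of the component records exactly which trees contain which vertices.

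\textbf{Step 3: map components and assemble.} Apply the rooted global isomorphism $h$ of Lemma~\ref{lem:Global-homomorphism3} to each component, which preserves whether the component contains the source. Since $h$ is a graph isomorphism on each component, it transports each colored subtree to a colored subtree of the image, so colors, the source color and intersection patterns are all preserved.

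Three compatibility checks remain. First, the images of distinct components must stay disjoint and non-adjacent exactly as in $G_1$. I would obtain this by matching entire components-collections class by class: the counting argument of Lemma~\ref{lem:Global-homomorphism3} applies equally to the configuration formed by the union of components, provided that union's relevant neighborhoods are trees. Otherwise adjacency would again produce a short cycle or a larger cut. Second, $|\partial|$ is preserved, because by Proposition~\ref{prop:The_size_of_the_induced_cut_set} it depends only on $n$, $m$ and the number of components. Third, invertibility follows by symmetry, applying the same construction from $G_2$ to $G_1$, and bijectivity follows from the class-by-class equality of counts.
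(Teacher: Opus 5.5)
\paragraph{There is a genuine gap in Step~3.}
Your reduction breaks down at the first compatibility check in Step~3, and that check is where the content of the lemma lies. The counting in Lemma~\ref{lem:Global-homomorphism3} works only for \emph{connected} subgraphs: each occurrence is rooted at each of its vertices and the local isomorphism is applied there. A configuration of two or more skeleton components that must be pairwise disjoint and non-adjacent does not lie in any ball where the local isomorphism applies. Its number of occurrences is therefore not a local quantity, and saying that the union's ``relevant neighborhoods are trees'' does not produce it.

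The missing idea is inclusion--exclusion plus induction, which is exactly the paper's Case~2:
\begin{itemize}
\item The number of placements with all components separated equals the number of independent placements minus the number of placements in which some components meet or are joined by an edge.
\item The independent placements are determined by the individual counts, which agree in $G_1$ and $G_2$; the source-containing piece is matched as a rooted tree.
\item The touching placements are placements of fewer, merged components. They are matched by induction on the number of trees, which is the paper's Case~1 of recoloring joint trees.
\end{itemize}

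For this induction to close, merged components must again be trees on fewer than $\ell$ vertices. That needs a bound on the \emph{total} skeleton size, not the per-component bound of Step~1. For example, take $k=3$ and $\ell=4$. Two single edges each satisfy your per-component bound. Yet the number of pairs of edges at distance at least $2$ is $\binom{m}{2}-3n-4m+2C_4$, which depends on the number $C_4$ of $4$-cycles. Such pairs are excluded from $\mathcal{E}_G(4)$ only because their boundary has $8>4$ edges. The total bound does follow from your Step~1 conclusion: $c\ge 2$ separated tree components with $N$ vertices give $|\partial(\varphi)|\ge|\partial(\mathrm{skeleton})|=(k-2)N+2c$, hence $N\le \ell-4/(k-2)<\ell$. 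However, you never derive or use it.

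\paragraph{Step~1 is also not correct as argued.}
For connected $X$ with cycle rank $r(X)$, Proposition~\ref{prop:The_size_of_the_induced_cut_set} gives $|\partial(X)|=(k-2)n(X)+2-2r(X)$. This can be as small as $k$, e.g.\ for $X=V(G)\setminus\{v\}$. So ``a cycle plus attached material gives $|\partial(X)|\ge \ell(k-2)$'' is false, and the inclusion $\partial(X)\subseteq\partial(\varphi)$ alone rules nothing out.

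What controls the size is the set of edges of $G[X]$ that lie in $\partial(\varphi)$, i.e.\ internal edges split by some tree. Split the skeleton into maximal connected classes $Y_i$ of vertices that lie in exactly the same trees of $\varphi$. Each $Y_i$ is contained in a single tree, and $|\partial(\varphi)|=\sum_i|\partial(Y_i)|-e_Q$, where $e_Q$ is the number of edges between distinct classes. It is this quantity, combined with the girth, that you must show exceeds $\ell(k-2)$ whenever a component is cyclic or has at least $\ell$ vertices.

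The same point affects your second check. What must be preserved is $|\partial(\varphi)|$, which includes these split internal edges and is determined by the coloring. It is not the boundary size of the skeleton given by the Proposition.
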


\begin{proof}
According to the global isomorphism, both graphs have the same set of subtree collections. Let $[\psi]$ be a subtree collection in $G_1$. Since the trees are independently embedded in the graphs, $[\psi]$ has the same number of embeddings in $G_1$, and $G_2$. Hence, we proceed with the proof by constructing a coloring isomorphism between the embedding of the given subtree collection $[\psi]$ in $G_1$ and $G_2$. The proof follows by induction on the number of subtrees in $[\psi]$.

\paragraph{Base Case.}
Suppose $|\psi|=1$, and let $\varphi=\{T\}$ be an embedding of $[\psi]$ in $G_1$. By Proposition~\ref{prop:The_size_of_the_induced_cut_set}, the subtree $T$ has fewer
than $\lfloor \ell - \tfrac{2}{k-2} \rfloor \le \ell-1$ vertices. According to the global isomorphism lemma, there exists a rooted isomorphism between the trees in $G_1$ and $G_2$ with at most $\ell-1$ vertices.

\paragraph{Inductive Step.} Assume the statement holds for all subtree collections with at most $ |\psi|-1 $ components. Consider an embedding $ \varphi $ of a subtree collection $[\psi]$ in $ G_1 $.

\subparagraph{Case 1: $ \varphi $ is a joint embedding.}  
If $ \varphi $ is joint, we recolor all intersecting subtrees in its skeleton with the same color, yielding a new disjoint embedding $ \varphi' $ with the same skeleton but a smaller induced cut set and fewer trees (see Figure~\ref{fig:embedding}). By the induction hypothesis, there exists a subtree collection $ h(\varphi') $ in $ G_2 $ that is color-isomorphic to $ \varphi' $. Then, for each disjoint tree $ X $ in $ h(\varphi') $, we can uniquely restore the original colors using the corresponding isomorphic tree in $ \varphi' $, ensuring that $ h(\varphi') $ is color-isomorphic to $ \varphi $.

\paragraph{Case 2: $\varphi$ is a disjoint embedding.}
Since the subtrees in $\varphi$ are pairwise disjoint, at most one subtree can contain the source.
Consequently, two disjoint embeddings of $[\psi]$ are color-isomorphic if and only if their
source-containing subtrees (if any) are isomorphic as rooted trees. For each subtree $T$ in $G_1$ that contains the source $s_1$, there exists a rooted isomorphic
subtree $h(T)$ in $G_2$ with respect to the source $s_2$. Because the remaining subtrees are
embedded independently of $T$, the total number of embeddings of $[\psi]$ in $G_1$ that contain
$T$ is equal to the number of embeddings in $G_2$ that contain $h(T)$. Moreover, since rooted isomorphisms also exist between joint embeddings, the number of joint embeddings containing
$T$ and $h(T)$ is the same in both graphs. It follows that the number of disjoint embeddings
of $[\psi]$ containing $T$ in $G_1$ equals the number of disjoint embeddings containing $h(T)$
in $G_2$.

We therefore construct the required color-preserving isomorphism by matching embeddings
according to the rooted correspondence(if exist) $T \mapsto h(T)$.

\end{proof}

In the following subsection, we apply the subtree embeddings isomorphism lemma to establish that graphs without non-trivial cut sets are the most reliable near zero.

\subsection{Regular Graphs Without Non-Trivial Cut Sets Are Reliable} \label{high_girth}

We decompose the unreliability polynomial into a sum of trivial and non-trivial components. Let $G$ be a $k$-regular graph with girth $g$, and let $\omega=g(k-2)$. Using the representation in Equation~\ref{eq:inc_exc_principle_ATR}, we define the \emph{trivial polynomial} as:
\begin{equation} \label{eq:ATR_trivial}
    U_G^{(t)} = \Pr \left(\bigcup_{X\in \tmcs_G}F_X\right),
\end{equation}
where $F_X$ represents the event that all the edges in $X$ fail. Since $ U_{G}^{(t)} \leq U_{G} $, we define the \emph{non-trivial polynomial} as $ U_{G}^{(n)} = U_{G} - U_{G}^{(t)} $. Equivalently, Proposition~\ref{prop:The_size_of_the_induced_cut_set} shows that each trivial cut set induced from a subtree of at most $a=\lfloor g -\frac{2}{k-2}\rfloor$, and therefore, $U_G^{(t)}$ can be expressed in terms of trees as
\begin{equation}
    U_G^{(t)} = \Pr \left(\bigcup_{T\in \sigma_G(a)}F_{\partial(T)}\right).
\end{equation}

$U_G^{(t)}$ can also be expressed in terms of subtree embeddings. For each finite collection of trivial cut sets, $\{\partial(T_i)\}_{i\in I}$, their intersection is represented by the corresponding subtree embedding $\varphi=\{T_i\}_{i\in I}$
\begin{equation}
    \bigcap_{i\in I}F_{\partial(T_i)}=F_{\partial(\varphi)}.
\end{equation}
Consequently, using the inclusion-exclusion principle, the first $\omega$ terms of the trivial unreliability polynomial are
\begin{equation} \label{eq:represent_trivial_polynom}
    U_{G}^{(t)} \overset{\omega}{=} \sum_{\varphi\in\mathcal{E}_{G}(\omega)} (-1)^{|\varphi|-1} \cdot p^{|\partial(\varphi)|},
\end{equation}
where $f\overset{\omega}{=}g$ for two polynomials indicate that $f-g=O(p^{\omega+1})$.

\begin{lemma}[Graphs with Non-Trivial Cut Sets Are Less Reliable] 
\label{lem:non-trivial_cut_Sets}  
Let $ G_1 $ and $ G_2 $ be two graphs with the same number of vertices and edges, each with a girth of at least $ g $. Suppose that $ \ntmcs_{G_1}(g(k-2)) = \emptyset $ and $ \ntmcs_{G_2}(g(k-2)) \neq \emptyset $. Then, $ G_1 $ is more reliable than $ G_2 $ near zero.
\end{lemma}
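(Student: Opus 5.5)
We compare $U_{G_1}$ and $U_{G_2}$ through the lexicographic order on their power-series coefficients, using Lemma~\ref{lem:coefficients_comparison}. Set $\omega = g(k-2)$ and split each polynomial as $U_{G_i} = U_{G_i}^{(t)} + U_{G_i}^{(n)}$. The argument has two parts: the trivial parts agree up to order $\omega$, and the non-trivial part of $G_2$ contributes a strictly positive term of order at most $\omega$, while that of $G_1$ contributes nothing up to that order.

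\textbf{Step 1: the trivial parts coincide up to order $\omega$.} By \eqref{eq:represent_trivial_polynom}, $U_{G_i}^{(t)} \overset{\omega}{=} \sum_{\varphi\in\mathcal{E}_{G_i}(\omega)} (-1)^{|\varphi|-1} p^{|\partial(\varphi)|}$. Lemma~\ref{lem:intersection-homomorphism}, applied with $\ell=g$, gives a bijection $h:\mathcal{E}_{G_1}(\omega)\to\mathcal{E}_{G_2}(\omega)$. This bijection preserves the number of trees in each embedding. By Proposition~\ref{prop:The_size_of_the_induced_cut_set}, applied to the colour-isomorphic skeletons, it also preserves $|\partial(\varphi)|$. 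Hence $U_{G_1}^{(t)} \overset{\omega}{=} U_{G_2}^{(t)}$.

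\textbf{Step 2: the non-trivial part of $G_1$ vanishes up to order $\omega$.} Consider any failure event with at most $\omega$ failed edges that disconnects $G_1$. It contains a minimal cut set with at most $\omega$ edges. Since $\ntmcs_{G_1}(\omega)=\emptyset$, that minimal cut set is trivial, so the event already lies in $\bigcup_{X\in\tmcs_{G_1}}F_X$. Writing $U^{(n)}$ as the probability of disconnection minus the trivial union, all terms of order $\le\omega$ cancel. Therefore $U_{G_1}^{(n)}=O(p^{\omega+1})$, and so $U_{G_1}\overset{\omega}{=}U_{G_1}^{(t)}$.

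\textbf{Step 3: the non-trivial part of $G_2$ has a positive term of order at most $\omega$.} Let $d\le\omega$ be the smallest size of a non-trivial minimal cut set of $G_2$. Every disconnecting set of size $<d$ contains a minimal cut set of size $<d$, which must be trivial. So $U_{G_2}^{(n)}=O(p^{d})$.

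At order exactly $d$, count the edge sets $S$ of size $d$ that disconnect $G_2$ but contain no trivial cut set. Every non-trivial minimal $d$-cut $X$ is such a set: any cut set properly contained in $X$ would contradict minimality, and $X$ itself is not trivial. Thus the $p^d$ coefficient of $U_{G_2}^{(n)}$ (in the Bernstein basis) is at least $|\ntmcs_{G_2}(d)|\ge 1$. Combining with Steps 1 and 2, the Bernstein coefficients $b_j$ of $G_1$ and $G_2$ agree for $j<d$, and $b_d(G_1)<b_d(G_2)$.

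\textbf{Conclusion and main obstacle.} Lemma~\ref{lem:coefficients_comparison}(1) now gives that $G_1$ is more reliable than $G_2$ near zero; strictness follows from the strict inequality at index $d$.

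The delicate point is Step 3's bookkeeping. Power-series and Bernstein coefficients up to order $d$ determine each other, so comparing either is legitimate. We must also make sure the $d$-th coefficient of $U^{(t)}$ is the same in both graphs, which needs $d\le\omega$; this is exactly why the hypothesis is stated for $g(k-2)$.

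We also need that the bijection $h$ in Step 1 really preserves $|\partial(\varphi)|$ for joint embeddings. This follows from the colour-isomorphism of skeletons: $|\partial|$ equals $k\,n(X)-2m(X)$ of the skeleton, which is an isomorphism invariant once the skeleton is a forest of fewer than $g$ vertices per component.
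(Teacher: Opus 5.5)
Your proof follows the paper's argument. You split $U_{G_i}=U_{G_i}^{(t)}+U_{G_i}^{(n)}$, get $U_{G_1}^{(t)}\overset{\omega}{=}U_{G_2}^{(t)}$ from the colour isomorphism of Lemma~\ref{lem:intersection-homomorphism}, and then compare the non-trivial parts. Your Steps 2--3 correctly make precise what the paper compresses into ``$U_{G_1}^{(n)}=0$ and $U_{G_2}^{(n)}>0$''.

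The one slip is your closing justification that $h$ preserves $|\partial(\varphi)|$. For joint embeddings, $|\partial(\varphi)|$ is not $k\,n(X)-2m(X)$ of the skeleton $X$. For example, with $T_1=\{u\}$ and $T_2$ the edge $uv$, one gets $|\partial(\varphi)|=2k-1$, whereas the formula gives $2k-2$.

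The correct reason is that an edge $uv$ lies in $\partial(\varphi)$ exactly when $u$ and $v$ belong to different sets of trees of $\varphi$. Hence $|\partial(\varphi)|$ is determined by the coloured subgraph induced on the skeleton's vertex set, and that is what the colour isomorphism has to preserve.
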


\begin{proof}
Set $ \omega = g(k-2) $. The trivial unreliability polynomial satisfies
\[
U_{G_i}^{(t)} \overset{\omega}{=} \sum_{\varphi\in\mathcal{E}_{G_i}(\omega)} (-1)^{|\varphi|-1} \cdot p^{|\partial(\varphi)|}.
\]
By applying the coloring isomorphism lemma, we conclude that 
\[
U_{G_1}^{(t)} \overset{\omega}{=} U_{G_2}^{(t)}.
\]
Since $ U_{G_1}^{(n)} = 0 $ and $ U_{G_2}^{(n)} > 0 $, it follows that $ U_{G_1} < U_{G_2} $ in the lexicographical order of the coefficient vector. By Lemma~\ref{lem:coefficients_comparison}, this implies that $ G_1 $ is more reliable than $ G_2 $ near zero.
\end{proof}

\subsection{Main Theorem} \label{subsection:main_theorem}

\begin{theorem}[Regular Graphs Without Non-Trivial Cut Sets] 
\label{thm:Most-reliable-graphs-regular}
Let $ n \geq 2 $ such that $ m=kn/2 \in \mathbb{N} $, and define $ \mathcal{D}_{n,m}(g) $ as the set of all regular graphs in $ \mathcal{G}_{n,kn/2} $ that do not contain non-trivial cut sets with at most $ g(k-2) $ edges. Suppose there exists $ g \geq 4 $ such that $ \mathcal{D}_{n,m}(g) \neq \emptyset $. Then,
\[
\mathcal{A}_{g(k-2)} = \mathcal{D}_{n, m}(g).
\]
\end{theorem}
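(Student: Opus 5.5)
We prove the two inclusions $\mathcal{D}_{n,m}(g)\subseteq\mathcal{A}_{g(k-2)}$ and $\mathcal{A}_{g(k-2)}\subseteq\mathcal{D}_{n,m}(g)$. Set $\omega=g(k-2)$ and fix some $G_1\in\mathcal{D}_{n,m}(g)$, which exists by hypothesis.

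\textbf{Reduction to regular graphs.} Membership in $\mathcal{A}_\omega$ depends only on the first $\omega$ coefficients. Since $g\ge 4$, we have $\omega\ge k$, so every graph in $\mathcal{A}_\omega$ lies in $\mathcal{A}_{k-1}$ and in $\mathcal{A}_k$.

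Graphs in $\mathcal{A}_{k-1}$ have edge connectivity $k$, so they have minimum degree $k$. With $m=kn/2$ this forces $k$-regularity.

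Inside $\mathcal{A}_{k-1}$, the coefficient $b_k$ counts $k$-edge cuts. Every $k$-regular, $k$-edge-connected graph has at least the $n$ trivial vertex cuts. So a graph of small girth is penalised at the first index where a cycle-induced cut appears. By Proposition~\ref{prop:The_size_of_the_induced_cut_set}, a cycle of length $\ell<g$ induces a cut $\partial(C)$ of size $\ell(k-2)\le\omega$. This cut contains no tree-induced cut of the same size. It is not of the form $\partial(T)$ for a tree with at most $g-1$ vertices, so it is a non-trivial cut set of size at most $\omega$, once we check minimality (or replace it with a minimal sub-cut).

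\textbf{Key dichotomy.} We want to show that every $k$-regular $G_2$ which is not in $\mathcal{D}_{n,m}(g)$ satisfies $G_1<G_2$ lexicographically in the first $\omega$ coefficients. There are two cases.

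\emph{Case 1: $G_2$ has girth at least $g$.} Apply Lemma~\ref{lem:non-trivial_cut_Sets} directly. The coefficient isomorphism (Lemma~\ref{lem:intersection-homomorphism}) gives $U^{(t)}_{G_1}\overset{\omega}{=}U^{(t)}_{G_2}$. Since $U^{(n)}_{G_2}$ contributes a positive term $p^{|X|}$ at the smallest non-trivial minimal cut $X$, with $|X|\le\omega$, $G_2$ is strictly worse within the first $\omega$ coefficients.

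\emph{Case 2: $G_2$ has girth $g_2<g$.} Compare $G_1$ and $G_2$ as graphs of common girth at least $g_2$. Both have only trivial cuts (relative to girth $g_2$) of size below $g_2(k-2)$. For $G_1$ this holds since it has girth at least $g$ and no non-trivial cuts up to $\omega$. Hence the coefficients agree up to index $g_2(k-2)-1$, by the coefficient isomorphism lemma with $\ell=g_2$. At index $g_2(k-2)$, $G_2$ has an extra cut induced by its short cycle, while $G_1$ has none. I would again run the Lemma~\ref{lem:non-trivial_cut_Sets} argument with $\omega'=g_2(k-2)$, treating that cycle cut as non-trivial.

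\textbf{Conclusion and main obstacle.} Together the cases show that $\mathcal{A}_\omega$ consists exactly of graphs tying with $G_1$ through index $\omega$. All members of $\mathcal{D}_{n,m}(g)$ tie with each other, by the isomorphism lemma applied to each pair. Hence $\mathcal{A}_\omega=\mathcal{D}_{n,m}(g)$.

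The main obstacle is Case 2. The trivial-cut accounting must be consistent across two different girth thresholds. The lower-order trivial cuts in $G_1$ (trees with fewer than $g_2$ vertices) must match those of $G_2$ exactly. The inclusion--exclusion terms involving trees of size between $g_2$ and $g-1$ in $G_1$ must not affect coefficients below $g_2(k-2)$. This holds because such trees have cut size $t(k-2)+2>g_2(k-2)$.

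I would also need to verify that the cycle cut in $G_2$ is minimal. Its complement is connected when $k\ge3$ and $n$ exceeds the cycle length, and minimality is needed for a strict increase in $b_{g_2(k-2)}$.
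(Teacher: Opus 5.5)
Your proposal follows the paper's own proof: you split competitors by girth, apply Lemma~\ref{lem:non-trivial_cut_Sets} at threshold $g_2(k-2)$ when $g_2<g$ and at $g(k-2)$ otherwise, and use the subtree-embedding isomorphism (Lemma~\ref{lem:intersection-homomorphism}) to show that members of $\mathcal{D}_{n,m}(g)$ tie, while also making explicit the reduction to $k$-regular competitors via edge connectivity and the threshold-consistency check that the paper leaves implicit. Two statements should be dropped rather than relied on: first, $G_2$ in Case~2 need not have only trivial cuts below $g_2(k-2)$, which is harmless because Lemma~\ref{lem:non-trivial_cut_Sets} only needs one non-trivial cut of size at most $g_2(k-2)$, and an earlier one just makes $G_2$ lose sooner; second, the complement of a shortest cycle need not be connected, so minimality of the cycle cut should come from your sub-cut fallback rather than from that claim.
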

\begin{proof}
    Set $\omega=g(k-2)$, and suppose that $G\in \mathcal{D}_{n,m}(g)$. Suppose also that $G'\in \mathcal{G}_{n,m}$ is another graph with the same size. If $g'<g$, then according to Lemma~\ref{lem:non-trivial_cut_Sets}, $U_{G}\overset{\omega}{<}U_{G'}$. However, if $g=g'$, their non-trivial polynomial maintain $U_{G}^{(n)}\overset{\omega}{=}U_{G'}\overset{\omega}{=}0$, and by Equation~\eqref{eq:represent_trivial_polynom}, their trivial polynomials maintain $U_G^{(t)}\overset{\omega}{=}U_{G'}^{(t)}$, leading to $U_G\overset{\omega}{=}U_{G'}$.
\end{proof}

\begin{corollary} \label{cor:no-ntmcs-optimal}
If there exists a $k$-regular graph $G\in\mathcal{G}_{n,m}$ such that $\ntmcs_G(g(k-2)) = \emptyset$, then the most reliable $k$-regular graph near zero in $\mathcal{G}_{n,m}$ does not contain non-trivial cut sets of size at most $g(k-2)$.
\end{corollary}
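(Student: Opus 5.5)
The corollary follows from Theorem~\ref{thm:Most-reliable-graphs-regular} together with the nested structure of the reliability classes. Since a $k$-regular $G\in\mathcal{G}_{n,m}$ with $\ntmcs_G(g(k-2))=\emptyset$ exists, $\mathcal{D}_{n,m}(g)\neq\emptyset$. Here $g$ is implicitly at least $4$, as in the theorem, with girth of $G$ at least $g$, so that the notion of trivial cut set is the one fixed in the theorem. The theorem then gives $\mathcal{A}_{g(k-2)}=\mathcal{D}_{n,m}(g)$.

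Next I use that the classes are nested, $\mathcal{A}_m\subseteq\mathcal{A}_{m-1}\subseteq\dots\subseteq\mathcal{A}_{g(k-2)}$, which holds by the recursive definition. A most reliable graph near zero lies in $\mathcal{A}_m$: by Lemma~\ref{lem:coefficients_comparison}, any graph whose coefficient vector is not lexicographically minimal is beaten near zero by one that is. Hence the most reliable graph near zero belongs to $\mathcal{D}_{n,m}(g)$, and so it has no non-trivial cut sets of size at most $g(k-2)$.

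The corollary speaks of the most reliable $k$-regular graph. I would handle this either by noting that the overall optimum in $\mathcal{D}_{n,m}(g)$ is itself $k$-regular, or by repeating the argument with the classes restricted to $k$-regular graphs. In the restricted version, Lemma~\ref{lem:non-trivial_cut_Sets} shows that every $k$-regular graph with a non-trivial small cut set is strictly worse in the first $g(k-2)$ coefficients than $G$.

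The main obstacle is comparing against a $k$-regular competitor of smaller girth. For such a graph the trivial/non-trivial split must be read relative to girth $g$, so that its short cycles count as non-trivial cut sets. In that case I would invoke Lemma~\ref{lem:non-trivial_cut_Sets} exactly as in the proof of the theorem: the competitor's short cycles induce cut sets of size below $g(k-2)$ by Proposition~\ref{prop:The_size_of_the_induced_cut_set}, so they count as non-trivial small cut sets, and the strict lexicographic inequality follows.
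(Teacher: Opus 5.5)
Your proposal is correct and matches the paper's approach. The paper gives no separate proof and reads the corollary off Theorem~\ref{thm:Most-reliable-graphs-regular}, exactly as you do: $G\in\mathcal{D}_{n,m}(g)$, then $\mathcal{A}_m\subseteq\mathcal{A}_{g(k-2)}=\mathcal{D}_{n,m}(g)$, and since every graph in $\mathcal{D}_{n,m}(g)$ is $k$-regular, your first option already settles the restriction to $k$-regular graphs. Your final paragraph is therefore unnecessary, and it would not work as written for your second option: Lemma~\ref{lem:non-trivial_cut_Sets} assumes both graphs have girth at least $g$, so it does not literally cover a smaller-girth competitor; that case is handled only inside the theorem you are already citing.
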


An immediate consequence of the Corollary is that by identifying at least one regular graph without non-trivial cut sets, we can establish a lower bound on the girth of the most reliable graph near zero, and by that, lower the search space.

\begin{lemma} \label{lem:girth6}
The girth of the most reliable cubic graphs near zero with at least 14 vertices is at least 6.
\end{lemma}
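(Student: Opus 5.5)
Our plan is to apply Corollary~\ref{cor:no-ntmcs-optimal} with $k=3$ and $g=6$, so that $g(k-2)=6$. This requires, for every even $n\ge 14$, one cubic graph $G\in\mathcal{G}_{n,3n/2}$ of girth at least $6$ with no non-trivial minimal cut sets of size at most $6$. Cubic graphs of girth $6$ exist exactly for even $n\ge 14$, the Heawood graph being the smallest. Granting such a $G$, the corollary says the most reliable cubic graph near zero has no non-trivial cut set with at most $6$ edges. It then remains to show that girth $\le 5$ always forces such a cut set.

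That last step is immediate from Proposition~\ref{prop:The_size_of_the_induced_cut_set}. A cycle $C$ with $\ell\in\{3,4,5\}$ vertices induces $|\partial(C)|=\ell(k-2)=\ell\le 5$ edges. If $n$ is large enough that the complement of $C$ is connected, this is a minimal cut set. It is non-trivial, because a tree with $t$ vertices induces $t+2$ edges, and a tree with $t=\ell-2$ vertices cannot have the same boundary as the cycle. If the complement is disconnected, some sub-cut of $\partial(C)$ is smaller still. That sub-cut is non-trivial as well, or else a $3$-edge-connectivity argument applies; it suffices that every cubic graph attaining the optimum is $3$-edge-connected, which already follows from minimality of $b_1,b_2$. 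Either way we obtain a non-trivial cut set of size at most $6$, so the optimum has girth at least $6$.

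The main work is constructing $G$ for every even $n\ge 14$ and certifying that it has no non-trivial cut sets of size $\le 6$. By the identity $|\partial(X)|=3n(X)-2m(X)$, a connected skeleton $X$ with $|\partial(X)|\le 6$ satisfies $m(X)\ge (3n(X)-6)/2$. That makes $X$ a tree with at most $4$ vertices (trivial), a tree-plus-one-edge with $n(X)\le 6$ (this requires a cycle of length $\le 6$, hence a hexagon, giving $|\partial|=6$), or denser. With $2m(X)\ge 3n(X)-6$, the subgraph is nearly cubic, so the complement side is also small relative to its own boundary. The certification therefore reduces to two requirements. First, $G$ is cyclically $7$-edge-connected in the sense that every edge cut of size $\le 6$ has a side that is a tree or a single hexagon. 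Second, hexagons must be handled. A $6$-cycle yields exactly the non-trivial cut of size $6$, so we actually need girth $\ge 7$ for $G$, or the statement's threshold must be read so that hexagon cuts are excluded. For $n\ge 24$ we would use girth-$7$ cubic graphs, which exist for all even $n\ge 24$ (McGee graph onward), and for $14\le n\le 22$ we would instead compare directly via the House of Graphs enumeration.

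The obstacle I expect is exactly this small range and the rigorous exclusion of denser skeletons for an infinite family. The first approach to try is a computer check for $14\le n\le 22$ (and beyond, up to the database limit). For larger $n$, cyclic edge-connectivity results for high-girth cubic graphs would then guarantee that the smaller side of any $\le 6$ cut is acyclic, since a side containing a cycle has at least $g$ vertices and boundary at least $g-2\cdot\#\text{extra edges}$, and counting shows the boundary exceeds $6$ once the side is not a tree.
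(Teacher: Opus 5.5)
There is a genuine gap. Your overall plan matches the paper's: exhibit a witness, apply Corollary~\ref{cor:no-ntmcs-optimal}, then note that short cycles induce non-trivial cuts. Your hexagon observation is also correct. In a girth-$6$ cubic graph on $n\ge 14$ vertices, a $6$-cycle induces a non-trivial $6$-edge cut, so the paper's claim that its girth-$6$ witness has ``no non-trivial cut sets with at most $6$ edges'' cannot hold as stated.

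Your repair, however, goes the wrong way and breaks the proof. Taking $g=6$ in the corollary forces a witness of girth at least $7$, and none exists below the McGee graph on $24$ vertices. For $14\le n\le 22$ every cubic graph has girth at most $6$, hence (by your own observation) a non-trivial cut with at most $6$ edges. So $\mathcal{D}_{n,3n/2}(6)=\emptyset$ there and the corollary yields nothing; deferring to ``the House of Graphs enumeration'' is not an argument. For $n\ge 24$ your certification is also unsupported, because girth does not bound cyclic edge-connectivity from below. Delete a vertex from each of two girth-$7$ cubic graphs and join the three degree-$2$ vertices of one copy to those of the other by a perfect matching. The result is cubic, has girth at least $7$, and has a cyclic $3$-edge cut. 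Your counting cannot work either: a connected $X$ with cycle rank $\beta(X)$ has $|\partial(X)|=n(X)-2\beta(X)+2$, which is small whenever $X$ carries many independent cycles, whatever the girth.

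The missing observation is that threshold $5$ already suffices, as your own second paragraph shows. In a $3$-edge-connected cubic graph, a cycle of length $\ell\le 5$ induces a non-trivial minimal cut with $\ell\le 5$ edges. So apply the corollary with $g=5$, i.e.\ $g(k-2)=5$. The witness then needs only girth $6$ and no cyclic edge cut with at most $5$ edges. The paper supplies one for every even $n\ge 14$: the $n$-cycle $0,1,\dots,n-1$ with chords $i\sim i+5 \pmod n$ for even $i$ (LCF $[5,-5]^{n/2}$, the Heawood graph at $n=14$).

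The paper only asserts the cut condition for this family. You can certify it as follows. The maps $i\mapsto i+2$ and $i\mapsto 5-i$ are automorphisms, so the graph is vertex-transitive. A connected vertex-transitive cubic graph has cyclic edge-connectivity equal to its girth (Nedela--\v{S}koviera). Hence every minimal cut with at most $5$ edges has a tree side with at most $3$ vertices, so it is trivial. With this, neither girth-$7$ graphs nor any computer search is needed.
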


\begin{proof}
We construct a cubic graph with girth 6 that contains no non-trivial cut sets with at most 6 edges. This is achieved by first drawing a cycle with $ n $ vertices. Then, each even-indexed vertex $ i $ is connected to $ (i+5) \mod n $. Examples of such graphs are shown in Figure~\ref{fig:most_reliable_graphs}, where $ r $ is 8 or 9.
\end{proof}

We encourage further exploration of families of regular graphs with high girth and without non-trivial cut sets to improve the lower bound on the girth of the most reliable graph near zero.

Theorem~\ref{thm:Most-reliable-graphs-regular} is used in the next Section to find infinite families of most reliable sparse graphs near zero.

\section{Reliable Sparse Graphs}    
\label{sec:sparse}
\subsection{Reliability Near Zero}
\label{subsec:unequal_chains}

In this section, we study sparse graphs with constant redundancy $r \leq 0.5n$, an asymptotically large number of vertices $n$, and a small failure probability $p$. As shown in Subsection~\ref{subsec:sparse}, sparse graphs that minimize the first two reliability coefficients $b_1$ and $b_2$ have a 3-connected cubic structure graph, with chains whose lengths differ by at most one. Constructing a highly reliable sparse graph in this regime involves two main steps: (1) identifying a reliable cubic structure graph, and (2) determining the placement of long chains.

We recall the decomposition:
\[
U_G = U_G^{(1)}+U_G^{(2)}
\]
where $U_G^{(2)}$ is the probability of disconnection due to two or more failures in the same chain, and depends only on the number and lengths of the chains. The term $U_G^{(1)}$ accounts for the probability of a failure in which each chain contains at most one failing edge, and thus captures the structural contribution of the graph.

We express $U_G^{(1)}$ as:
\begin{equation}
    U_G^{(1)}=\sum_{k=0}^{3(r-1)} b_k^{(1)}p^k(1-p)^{m-k},
\end{equation}
and by expressing $b_k^{(1)}$ as a polynomial in $c$,
\begin{equation}
    b_k^{(1)}=\sum_{j=0}^k\, \gamma_{k,j}\,c^{k-j},
\end{equation}
we get

\begin{equation}
U_G^{(1)} = \sum_{k = 0}^{3(r - 1)} \sum_{j = 0}^{k} \gamma_{k,j} \cdot c^{k - j} p^k (1 - p)^{m - k},
\end{equation}
where $\gamma_{k,j}$ is calculated by summing for each subset of $j$ long chains, the number of cut sets with $k$ chains that contain the long chains subset. These coefficients depend only on the structure graph and the placement of the long chains, not on their lengths.

Lemma~\ref{lem:Extension of coefficients comparison} extends the classical coefficient compression argument. It shows that for any $r \geq 2$, there exists $n_0$ such that for all $n > n_0$, the asymptotically most reliable sparse graph near zero minimizes the coefficients $\gamma_{k,j}$ in lexicographic order on the index pairs $(k,j)$.

This leads to a key structural reduction: in the asymptotical case where $n$ is large enough, the most reliable sparse graph with $r$ redundant edges and $n = 2(r - 1) + (c - 1) \cdot 3(r - 1) + \lambda$ vertices depends only on its marked structure, that is, a cubic structure graph together with a choice of $\lambda$ long chains.

\medskip

We next consider a minimization of $U_G$ in a small interval that is not near zero. Lemma~\ref{lem:transpose_coeff_comp} shows that for large $n$, the uniformly most reliable graph must minimize the coefficients $\gamma_{k,j}$ in transposed lexicographic order on $(j,k)$ to be the most reliable on some interval of $p$. The key here, is that while in the regular lexicographical order, we set a high $c$, and for that $c$, we sufficiently lower $p$, here we set $s=c\frac{p}{1-p}$ to be the new $p$ variable, in that case, $cp$ is approximately a constant and we increase $c$, while ensuring $p\sim\frac{s}{c}$, leading to the minimization in the reverse lexicographical order.

\begin{lemma}[Transposed Coefficient Compression]
\label{lem:transpose_coeff_comp}
For each $r \geq 2$, there exists $n_0$ such that for all $n > n_0$, the uniformly most reliable sparse graph with $n$ vertices and $r$ redundant edges has a marked structure that minimizes the coefficients $\gamma_{k,j}$ in transposed lexicographic order on $(j,k)$.
    \[
\begin{tikzpicture}[every node/.style={anchor=center}]
  \matrix (m) [matrix of math nodes, row sep=1em, column sep=2em] {
    \gamma_{0,1} & \gamma_{1,1} &           &           \\
    \gamma_{0,2} & \gamma_{1,2} & \gamma_{2,2} &         \\
    \gamma_{0,3} & \gamma_{1,3} & \gamma_{2,3} & \gamma_{3,3} \\
    \vdots       & \vdots       & \vdots       & \vdots    \\
  };

  \draw[-{Stealth}] (m-1-1) -- (m-2-1);
  \draw[-{Stealth}] (m-2-1) -- (m-3-1);
  \draw[-{Stealth}] (m-3-1) -- (m-4-1);
  \draw[-{Stealth}] (m-4-1) -- (m-1-2);
  
  \draw[-{Stealth}] (m-1-2) -- (m-2-2);
  \draw[-{Stealth}] (m-2-2) -- (m-3-2);
  \draw[-{Stealth}] (m-3-2) -- (m-4-2);
  \draw[-{Stealth}] (m-4-2) -- (m-2-3);

  \draw[-{Stealth}] (m-2-3) -- (m-3-3);
  \draw[-{Stealth}] (m-3-3) -- (m-4-3);
  \draw[-{Stealth}] (m-4-3) -- (m-3-4);

  \draw[-{Stealth}] (m-3-4) -- (m-4-4);

\end{tikzpicture}
\]

\end{lemma}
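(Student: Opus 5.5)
Since a uniformly most reliable graph minimizes $U_G(p)$ at every $p$, it suffices to pick one value of $p$ for each large $n$ at which minimizing $U_G$ forces transposed-lexicographic minimality of $(\gamma_{k,j})$. I will restrict to competitors in $\mathcal{A}_2$. This is allowed because a uniformly most reliable graph is in particular most reliable near zero, so it lies in $\mathcal{A}_2$: it has a cubic 3-connected structure graph with balanced chains. Within $\mathcal{A}_2$ the term $U_G^{(2)}$ is common to all competitors, so only $U_G^{(1)}$ matters. Substituting $s=c\frac{p}{1-p}$, i.e. $p=\frac{s}{c+s}$, gives $p^k(1-p)^{m-k}c^{k-j}=(1-p)^m s^k c^{-j}$. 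Hence
\[
U_G^{(1)}=(1-p)^m\sum_{k,j}\gamma_{k,j}\,s^{k}c^{-j},
\]
and the prefactor $(1-p)^m$ is the same for all competitors.

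The main step is to choose $s$ as a function of $c$ so that the monomials $s^kc^{-j}$ are strictly ordered by $(j,k)$ transposed-lexicographically with dominating gaps. I take $c=s^{-t}$, so the monomial becomes $s^{k+jt}$ with $t$ a fixed integer larger than $K:=3(r-1)$. For $k\le K<t$, the exponent $k+jt$ is then injective in $(j,k)$, and ordering by this exponent is exactly the transposed order: first by $j$, then by $k$. The coefficients $\gamma_{k,j}$ are nonnegative integers bounded by some $M(r)$, since there are finitely many marked cubic structures for fixed $r$. Suppose two marked structures first differ at position $(j_0,k_0)$ in transposed order, with the first one smaller there by at least $1$. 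The difference of the two sums is then at least $s^{e_0}-\sum_{e>e_0}M s^{e}\ge s^{e_0}(1-M's)$, where $e_0=k_0+j_0t$ and $M'$ depends only on $r$. This is positive once $s<1/M'$. So at $p=\frac{s}{c+s}$ the strict minimizer of $U_G^{(1)}$ among marked structures is exactly the transposed-lex minimizer.

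It remains to make this consistent with the integer parameter $c$, which is determined by $n$ through \eqref{eq:c_lambda}. I fix $t$ and then define $s:=c^{-1/t}$ for the given $c$. The condition $s<1/M'$ becomes $c>M'^{t}$, which holds for $n>n_0(r)$ because $c\to\infty$ as $n\to\infty$ with $r$ fixed. The number $\lambda$ of long chains is fixed by $n$, so all competitors share it and the comparison is among marked structures with the same $\lambda$. Combining these: if $G$ is uniformly most reliable, then $U_G(p)\le U_{G'}(p)$ at this particular $p$ for every $G'$, and the previous paragraph forces the marked structure of $G$ to be transposed-lex minimal.

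The main obstacle I expect is the bookkeeping in the bound: verifying that $M$ (and hence $M'$) and the exponent range really depend only on $r$ and not on $c$. That holds because $\gamma_{k,j}$ counts pairs (cut set, long-chain subset) in a graph with $3(r-1)$ edges, so $\gamma_{k,j}\le 2^{2\cdot 3(r-1)}$. A further check is that the tie-breaking ($\le$ versus strict inequality) carries through the whole comparison chain rather than only at the first differing position.
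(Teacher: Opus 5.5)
Your proposal is correct and follows essentially the paper's route: the substitution $s=c\frac{p}{1-p}$ with $c=s^{-t}$, ordering the monomials $s^{k+jt}$ by exponent so that this reproduces the transposed order, and then checking that an admissible $p$ exists once $c$ (hence $n$) is large. The differences are minor. You fix an integer $t>3(r-1)$, where the paper takes $t>r$ and uses that the $\gamma_{k,j}$ with $k>r$ are common to all competitors. You evaluate at the single point $p=s/(c+s)$ with $s=c^{-1/t}$ instead of deriving an interval of $p$. You also make explicit the tail estimate and the reduction to $\mathcal{A}_2$ (so that $U_G^{(2)}$ cancels), both of which the paper leaves implicit.
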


\begin{proof}
We make a change of variables. Let $s = \frac{p}{1 - p} c$ and let $c = s^{-t}$. Then the polynomial becomes:
\begin{equation}
U_G^{(1)} = \left(\frac{s^{-t}}{s^{-t}+s}\right)^m\cdot \sum_{k = 0}^{3(r - 1)} \sum_{j = 0}^{k} \gamma_{k,j} s^{k + jt}
\end{equation}
Moreover, each failure of $k>r$ chains results in a disconnection in any graph. Therefore, the $\gamma_{k,j}$ coefficients for $k>r$ are only dependent on $r$, leading to
\begin{equation}
U_G^{(1)} = \left(\frac{s^{-t}}{s^{-t}+s}\right)^m\cdot \sum_{k = 0}^{r} \sum_{j = 0}^{k} \gamma_{k,j} s^{k + jt}+\rm{Const}(r)
\end{equation}

Since this is a polynomial in $s$, and $s$ can be made arbitrarily small by taking $p$ sufficiently close to zero, minimizing $U_G^{(1)}$ corresponds to minimizing the coefficients of the lowest $s$-degree terms first. That is, we prefer $\gamma_{k,j}$ over $\gamma_{k',j'}$ whenever $k + jt < k' + j't$. For $k\leq r$ and by choosing $t$ such that $t>r$, we get 
\begin{equation}
\label{eq:kj}
k+jt < k'+j't \quad\Longleftrightarrow\quad
\big(j<j'\big)\;\; \text{or}\;\; \big(j=j' \text{ and } k<k'\big),
\end{equation}
which is exactly the reverse lexicographical order.

To find the interval in which the Lemma holds, assume that $c$ is constant and compare $U_G$ with $U_H$, where $G$ is a marked structure that is optimal in the transposed lexicographical order, and $H$ is another marked graph of the same size. From Equation~\eqref{eq:kj}, there exist small $s_0$ such that if $s<s_0$, and $t>t_0=r$, then $U_G(s,t)<U_H(s,t)$.

We left to check if for a constant $c$, there exists an interval of $p$ such that $t>t_0$, and $s<s_0$. from the definition of $s$,
\[
s=\frac{p}{1-p}c< s_0\Rightarrow p< \frac{s_0}{s_0+c}
\]
and from the $t$ equation
\[
t=-\frac{\log(c)}{\log(\frac{p}{1-p}c)}> t_0\Rightarrow p>\frac{1}{1+c^{1+\frac{1}{t_0}}}
\]

Together, we get that the lemma holds in the interval
\begin{equation}
\frac{1}{1+c^{1+\frac{1}{t_0}}}< p< \frac{s_0}{s_0+c}
\end{equation}

This inequality holds only if 
\[
    \frac{1}{1+c^{1+\frac{1}{t_0}}} <\frac{s_0}{s_0+c} 
\]

which means
\begin{equation}
c> s_0^{-t_0}
\end{equation}
Those, setting $c_0=s_0^{-t_0}$, the Lemma holds if $c>c_0$.
\end{proof}

Because $\gamma_{k,0} = b_k$, we conclude that the structure of the graph that minimizes the sequence $(\gamma_{k,j})$ in transposed lexicographic order must coincide with the most reliable cubic graph near zero.

\medskip

To minimize $\gamma_{k,1}$ and $\gamma_{k,2}$, we analyze how cut sets interact with long chains. For $\gamma_{k,1}$, we are interested in the number of $k$-edge cut sets that contain a specific long chain. For $\gamma_{k,2}$, we must count the number of pairs of long chains that appear together in such a cut set.

Fortunately, in cubic structure graphs with only trivially small cut sets, the number of such cut sets is a function of the distance between the chains. This is formalized in the following lemma.

\begin{lemma}[Cut Sets Containing Two Edges]
\label{lem:cut_set_contain_two_edges}
Let $G$ be a cubic graph in which all cut sets of size at most $g$ are trivial. Let $d \geq 0$ and $k$ satisfy $d - 2 \leq k \leq g$. Then, for any two edges at distance $d$, the number of $k$-edge cut sets containing both edges is the same.
\end{lemma}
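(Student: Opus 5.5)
We will reduce the count to a count of subtrees of the infinite cubic tree, using that in the cubic case ($k=3$) a tree $X$ with $t$ vertices induces a cut set of size $t+2$ (Proposition~\ref{prop:The_size_of_the_induced_cut_set}). First, since every minimal cut set of size at most $g$ is trivial by hypothesis, every $k$-edge minimal cut set with $k\le g$ is $\partial(T)$ for a subtree $T$ on $t=k-2$ vertices, taken as the smaller side. Because $g\ge 4$ forces $t\le g-2$, and the girth is large relative to $t$, the subtree $T$ is uniquely determined by $\partial(T)$, except possibly for the skeleton/complement ambiguity. That ambiguity only matters when $n$ is tiny, and we will dispose of it by the convention fixed in the definition of the skeleton. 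So counting $k$-edge cut sets containing two given edges $e_1,e_2$ amounts to counting trees $T$ with $k-2$ vertices such that both $e_1$ and $e_2$ lie in $\partial(T)$.

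Second, we localize. If $e_1,e_2\in\partial(T)$, then each $e_i$ has exactly one endpoint in $T$. Hence $T$, being connected with $k-2$ vertices, contains a path between an endpoint of $e_1$ and an endpoint of $e_2$, and this forces $d\le k-2+1$ in the paper's distance convention; otherwise the count is $0$ for all pairs. In every case the union $T\cup e_1\cup e_2$ lies inside the ball of radius roughly $k$ around $e_1$. Here the girth hypothesis enters: all trivial cut sets up to size $g$, together with the girth bound implicit in the setting of Lemma~\ref{lem:non-trivial_cut_Sets} (a cycle of length $\ell$ gives a non-trivial cut set of size $\ell$), give girth $>g$. So this ball, with the relevant connected subgraphs of at most $k-2\le g-2$ vertices plus two pendant edges, is isomorphic to the corresponding piece of the infinite $3$-regular tree $\mathbb{T}_3$. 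This is the edge-rooted analogue of Lemma~\ref{prop:homomorphism-local}, and we will prove it by the same DFS argument, starting the traversal at $e_1$.

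Third, we exploit symmetry. In $\mathbb{T}_3$ the automorphism group acts transitively on ordered pairs of edges at a fixed distance $d$, since any such pair is joined by a unique geodesic path and the tree is homogeneous. Given $e_1,e_2$ and $e_1',e_2'$ at distance $d$, the local isomorphisms send both pairs to pairs in $\mathbb{T}_3$ at distance $d$, where an automorphism matches them. Composing these maps gives a bijection between trees $T$ with $k-2$ vertices having $e_1,e_2\in\partial(T)$ and the analogous trees for $e_1',e_2'$. Injectivity of $T\mapsto\partial(T)$ from the first step then yields equality of the cut-set counts.

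The main obstacle is the localization step. We must make precise that the relevant neighborhood (the geodesic between $e_1$ and $e_2$ plus all trees of $k-2$ vertices touching both) embeds isometrically in $\mathbb{T}_3$. This requires bounding its size against the girth: a subgraph spanning $k-2$ tree vertices plus two boundary edges could in principle close a cycle of length up to $k$. We will first try to get girth $\ge g+1\ge k+1$ from the hypothesis as above, which rules this out. If that falls short by one at the extreme $k=g$, we will handle this boundary case by noting that a closing cycle would itself give a non-trivial cut set of size at most $g$, contradicting the hypothesis.
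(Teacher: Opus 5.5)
\textbf{Verdict: there is a genuine gap, and it cannot be closed under the stated hypothesis.} The gap is in your localization step, and it is not confined to the boundary case $k=g$.

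\textbf{Where the localization fails.} Your girth bound (girth $>g\ge k$) controls only each \emph{single} relevant subgraph. The bound itself holds except for $K_4$ and $K_{3,3}$, but it must be argued through a minimal cut contained in $\partial(C)$, because $\partial(C)$ for a shortest cycle $C$ need not be minimal. Given the bound, $H_T=T\cup e_1\cup e_2$ has $k$ vertices, so it is a tree in $G$ and lifts uniquely to a subtree of $\mathbb{T}_3$ through a fixed lift $\tilde e_1$. The bound does not make the ball around $e_1$, or the union of all the $H_T$, tree-like.

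The path in $H_T$ from $e_1$ to $e_2$ is a route of inner length at most $k-3$. Two different routes close a cycle of length at most $2(k-3)+2=2k-4$, and girth $>k$ does not forbid this once $k\ge 5$. Different trees can then contain different lifts of $e_2$, possibly at different distances from $\tilde e_1$. What the covering argument actually gives is
\[
\#\{T:\ e_1,e_2\in\partial(T)\}=\sum_{R}N_k(\ell_R),
\]
summed over routes $R$ from $e_1$ to $e_2$ with inner length $\ell_R\le k-3$, where $N_k(\ell)$ is the corresponding count in $\mathbb{T}_3$. This depends on how many routes exist, not only on $d$. Your fallback does not help: the closing cycle can have length $2k-4>g$, so it produces no forbidden cut set.

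\textbf{A concrete counterexample.} Take $k=g=5$ in a cubic graph of girth $6$ whose minimal cut sets with at most $5$ edges are all trivial. The Pappus graph qualifies: it is vertex-transitive of girth $6$, hence cyclically $6$-edge-connected. Compare two pairs of edges:
\begin{itemize}
\item For a hexagon $a_1xa_2b_2yb_1$ with $e_1=a_1b_1$ and $e_2=a_2b_2$, both $\partial(\{a_1,x,a_2\})$ and $\partial(\{b_1,y,b_2\})$ are $5$-edge cut sets containing $e_1,e_2$.
\item For the end edges $f_1=w_1u_1$, $f_2=u_2w_2$ of a path $w_1u_1zu_2w_2$ with $w_1,w_2$ at distance $4$, only $\partial(\{u_1,z,u_2\})$ works.
\end{itemize}
Both pairs are at line-graph distance $3$, which is the convention behind the separation vector. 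The non-minimal $5$-edge disconnecting sets containing either pair are equally numerous, so the discrepancy persists under either reading of ``cut set''. This is exactly the $k=5$ case behind $\mu_3$ in Lemma~\ref{lem:properties_sparse}.

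The paper's proof roots the isomorphism of Lemma~\ref{lem:intersection-homomorphism} at an endpoint of one edge and does not control where the second edge lands. So following it will not repair your argument either.

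\textbf{What your approach does prove.} It establishes the claim for $k\le 4$, and for all $k\le g$ under the extra hypothesis that the girth exceeds $2k-4$. Under that hypothesis the route is unique and equals the geodesic. Every $H_T$ then lifts to a tree containing $\tilde e_1$ and one fixed lift $\tilde e_2$ at distance $d$, and transitivity of $\mathrm{Aut}(\mathbb{T}_3)$ on pairs of edges at distance $d$ finishes.

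In that version you also need the converse of your first step: $G-T$ is connected for every tree $T$ on $k-2$ vertices, so that every such $T$ really gives a minimal cut set. This holds because otherwise the components of $G-T$ would be small trees and $G$ would have fewer vertices than its girth permits.
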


\begin{proof}

The result follows from the coloring isomorphism in Lemma~\ref{lem:intersection-homomorphism}. Given two edges at distance $d$, choose one of them and let its closer endpoint serve as the source vertex for the isomorphism. Let $G$ and $H$ be two cubic graphs with the same conditions of the lemma, each of which contains a pair of long chains of distance $d$. By the coloring isomorphism lemma, any collection of connected subgraphs rooted at the source of $G$ and inducing a $k$-edge cut set has an isomorphic collection in $H$. Therefore, the number of $k$-edge cut sets containing both edges is the same on both graphs.
\end{proof}

This lemma implies two corollaries when $k<g$:

1. The value of $\gamma_{k,1}$ is invariant under the placement of long chains.

2. Minimizing $\gamma_{k,2}$ is equivalent to minimizing the number of long chain pairs at distance $k - 2$.

With these structural observations established, we are now equipped to examine the two minimization frameworks. We first analyze the minimization of coefficients in the $(k,j)$ lexicographic order, and then move on to the transposed $(j,k)$ order.

\subsubsection{$(k,j)$ Minimization}

\begin{definition}[Separation vector]
Let $G$ be a marked structure graph whose smallest non-trivial cut set has size $g$. The \emph{separation vector} is defined as
\[
\mu = (\mu_1, \mu_2, \dots, \mu_{g-3}),
\]
where $\mu_d$ denotes the number of long chain pairs at distance $d$ in $G$.
\end{definition}

We now show that minimizing $\gamma_{k,2}$ for small values of $k$ is equivalent to minimizing the number of long chain pairs at distance $k-2$.

\begin{lemma}[Minimizing $\gamma_{k,2}$]
\label{lem:minimize_gamma_k2}
Let $G$ be a cubic graph and let $g$ be the size of the smallest non-trivial cut set. Denote by $[G]_{k,j}$ the set of marked graphs in $\mathcal{A}_{k,j}$ with structure graph $G$. Then
\[
H \in [G]_{k,2} \iff H \in [G]_{k,0} \text{ and } \mu_{k-2} \text{ is minimized over all graphs in } [G]_{k,0}.
\]
\end{lemma}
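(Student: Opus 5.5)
We fix the cubic structure graph $G$ and vary only the set $L$ of $\lambda$ long chains. Since $\gamma_{k,0}=b_k(G)$ does not depend on $L$, every marking of $G$ in $\mathcal{A}_{k-1,k-1}$ passes to $[G]_{k,0}$ unchanged. The plan is therefore to show two things: within $[G]_{k,0}$ the coefficient $\gamma_{k,1}$ is constant, and $\gamma_{k,2}$ is an increasing affine function of $\mu_{k-2}$. The equivalence then follows, because $[G]_{k,2}$ consists of the markings in $[G]_{k,1}=[G]_{k,0}$ that minimize $\gamma_{k,2}$. Throughout we work in the regime $k<g$ (equivalently $k-2\le g-3$, the range where $\mu_{k-2}$ is defined), so that every $k$-edge cut set of $G$ is trivial, i.e.\ of the form $\partial(T)$ for a subtree $T$ with $|T|\le k-2$ vertices (Proposition~\ref{prop:The_size_of_the_induced_cut_set} with $k=3$).

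\textbf{Step 1: $\gamma_{k,1}$ does not depend on $L$.} Write $\gamma_{k,1}=\sum_{e\in L} N_k(e)$, where $N_k(e)$ is the number of $k$-cut sets containing $e$. By the local and global isomorphism lemmas (Lemmas~\ref{prop:homomorphism-local} and \ref{lem:Global-homomorphism3}), rooted at an endpoint of $e$, the family of small subtrees whose boundary contains $e$ looks the same around every edge. Hence $N_k(e)=N_k$ is the same for all edges, and $\gamma_{k,1}=\lambda N_k$.

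\textbf{Step 2: express $\gamma_{k,2}$ through the separation vector.} Write
\[
\gamma_{k,2}=\sum_{\{e,f\}\subseteq L} N_k(e,f),
\]
where $N_k(e,f)$ is the number of $k$-cut sets containing both $e$ and $f$. By Lemma~\ref{lem:cut_set_contain_two_edges}, $N_k(e,f)$ depends only on the distance $d=d(e,f)$; call it $\nu_k(d)$. Then
\[
\gamma_{k,2}=\sum_{d}\nu_k(d)\,\mu_d .
\]
Next, both $e$ and $f$ lie in $\partial(T)$ for a subtree $T$ with at most $k-2$ vertices, so $d(e,f)\le k-2$ (measuring distance so that adjacent edges are at distance $1$). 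This gives $\nu_k(d)=0$ for $d>k-2$. We will also show $\nu_k(k-2)>0$ by exhibiting a path $T$ of $k-2$ vertices that joins the two edges.

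\textbf{Step 3: reduce the remaining terms to earlier levels.} Now
\[
\gamma_{k,2}=\nu_k(k-2)\,\mu_{k-2}+\sum_{d<k-2}\nu_k(d)\,\mu_d .
\]
The tail sum must be constant on $[G]_{k,0}$. The idea is that this class arose from the earlier minimizations of $\gamma_{k',2}$ for $k'<k$. Applying the lemma inductively on $k$, those stages fixed $\mu_1,\dots,\mu_{k-3}$ at their minimal values among the markings retained. Since the classes are nested, every marking in $[G]_{k,0}$ has the same $\mu_d$ for all $d<k-2$. The only free quantity left in $\gamma_{k,2}$ is then $\mu_{k-2}$, entering with positive coefficient, which gives the claimed equivalence. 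The base case is $k=3$, where $d\le 1$ and only $\mu_1$ appears.

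\textbf{Main obstacle.} The delicate point is Step 3, namely that the nested classes really pin down the earlier $\mu_d$. Minimizing $\gamma_{k',2}$ at level $k'$ only forces $\mu_{k'-2}$ to be minimal given the earlier ones, so the induction must be set up so that the class $[G]_{k'',0}$ at each later level is contained in $[G]_{k',2}$. The intermediate $\gamma_{k',j}$ with $j\ge3$ can only shrink the class further, so $\mu_1,\dots,\mu_{k-3}$ remain constant on the smaller set.

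A secondary check is the distance convention, i.e.\ that the largest distance realized by a $k$-cut set is exactly $k-2$. This should be verified on the smallest cases: for $k=3$ the two edges share a vertex and $d=1$; for $k=4$ the subtree is a single edge $uv$ and the two cut edges are at distance $2$.
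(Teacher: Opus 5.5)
Your three steps (constancy of $\gamma_{k,1}$, splitting $\gamma_{k,2}$ by the distance of the long pair, and induction on $k$ via the nesting $[G]_{k,0}\subseteq[G]_{k-1,2}$) are exactly the paper's proof. However, Step~2 rests on a claim that is false under the paper's definitions. The paper's proof makes the same tacit assumption when it writes $\gamma_{k,2}=\sum_{i=1}^{k-2}\gamma_{k,2}^{(i)}$.

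You assert that for $k<g$ every $k$-edge cut set of $G$ is $\partial(T)$ for a tree with at most $k-2$ vertices, and hence $\nu_k(d)=0$ for $d>k-2$. But $\Gamma_k$ consists of \emph{all} disconnecting $k$-sets of chains. This is forced because $\gamma_{k,0}=b_k(S(G))$ is a Bernstein coefficient, and $b_k^{(1)}$ must count every type~(1) failure configuration. For $k\ge 4$ these sets include non-minimal ones such as $\partial(v)\cup\{x\}$ with $x$ arbitrary, which contain pairs at any distance.

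For instance, take $k=4$ and a structure graph with the following properties: $m_s=3(r-1)$ edges, girth at least $5$, and whose only minimal cuts of size at most $4$ are the sets $\partial(v)$ and $\partial(\{u,v\})$ for edges $uv$. The disconnecting $4$-sets are then $\partial(v)\cup\{x\}$ and $\partial(\{u,v\})$. A direct count gives $N_4(e,f)=m_s,\,5,\,4$ for $d(e,f)=1,\,2,\,\ge 3$. For $d\ge3$ the four sets are $\partial(w)\cup\{f\}$ and $\partial(w')\cup\{e\}$, with $w$ an endpoint of $e$ and $w'$ an endpoint of $f$. So $\nu_4(3)=4\neq0$ and $\gamma_{4,2}=m_s\mu_1+5\mu_2+4\sum_{d\ge3}\mu_d$.

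The conclusion survives, but Step~2 must be replaced by two facts you have not established:
\begin{itemize}
\item[(i)] $\nu_k(d)$ takes a single value $\nu_k^{\infty}$ for all $d>k-2$. Lemma~\ref{lem:cut_set_contain_two_edges} does not give this, since it only compares pairs at the \emph{same} distance.
\item[(ii)] $\nu_k(k-2)>\nu_k^{\infty}$, in place of your $\nu_k(k-2)>0$.
\end{itemize}
Since $\sum_{d\ge1}\mu_d=\binom{\lambda}{2}$ (summing over all distances, not only $d\le g-3$), these give
$\gamma_{k,2}=\sum_{d\le k-2}\bigl(\nu_k(d)-\nu_k^{\infty}\bigr)\mu_d+\nu_k^{\infty}\binom{\lambda}{2}$.
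After that, your Step~3 induction runs unchanged. For $k=4$ this reads $\gamma_{4,2}=(m_s-4)\mu_1+\mu_2+4\binom{\lambda}{2}$, where the excess $5-4=1$ is the path cut $\partial(\{u,v\})$.

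One way to get (i) and (ii) is inclusion--exclusion over families $\mathcal{F}$ of trivial minimal cuts of size at most $k$. The term of $\mathcal{F}$ is $\pm\binom{m_s-|U\cup\{e,f\}|}{k-|U\cup\{e,f\}|}$ with $U=\bigcup\mathcal{F}$. So $d$ can only matter through configurations in which the cuts around $e$ and around $f$ interact: one tree whose boundary contains both, or trees at $e$ and at $f$ that overlap or are adjacent. With $|U|\le k$, such an interaction forces $d\le k-2$, and at $d=k-2$ the only one is the single path cut.

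The same remark is needed in Step~1. The local isomorphism lemmas only see minimal cuts, whereas $N_k(e)$ also counts non-minimal ones; its independence of $e$ still holds, but by this counting rather than by the isomorphism alone.
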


\begin{proof}
We saw that the coefficient $\gamma_{k,1}$ for $k<g$ is independent of the placement of long chains. Therefore, $\mathcal{A}_{k,0} = \mathcal{A}_{k,1}$.

Next, decompose $\gamma_{k,2} = \sum_{i=1}^{k-2} \gamma_{k,2}^{(i)}$, where each $\gamma_{k,2}^{(i)}$ sums over long chain pairs at distance $i$ the number of $k$-cut sets containing them. Lemma~\ref{lem:cut_set_contain_two_edges} implies that $\gamma_{k,2}^{(i)}$ depends only on $\mu_i$ if $k<g$.

For the base case $k = 3$, we have $\gamma_{3,2} = \gamma_{3,2}^{(1)}$, and minimizing $\mu_1$ suffices. Assuming the lemma holds for $k = a-1$, we consider $k = a$. From the induction assumption, all graphs in $[G]_{a,0}$ have identical values of $\mu_i$ for $i < a-2$ and thus equal $\gamma_{a,2}^{(i)}$ for those terms. The only remaining term to minimize is $\gamma_{a,2}^{(a-2)}$, which depends solely on $\mu_{a-2}$.
\end{proof}

To minimize $\gamma_{k,j}$ for $j \geq 3$, more complex configurations involving triplets, quartets, etc., must be considered. We restrict our discussion to the case $k \leq 4$.

Minimizing the first three coefficients $b_1^{(1)}, b_2^{(1)}, b_3^{(1)}$ is well established (see \cite{wang1997structure,landgren2024newframeworkidentifyingreliable}), but we reframe it here using our terminology. As shown earlier, a cubic structure graph with chain lengths differing by at most one yields $b_1^{(1)} = b_2^{(1)} = 0$. To minimize $\gamma_{3,0} = b_3^{(1)}$, the structure must contain only trivial 3-edge cut sets (i.e., incident edges of a node).

In this setting, each long chain belongs to exactly two cut sets, so $\gamma_{3,1} = 2\lambda$, independent of their placement. A pair of long chains is in the same cut set only if adjacent to the same node. Let $\alpha_i$ be the number of structural vertices adjacent to $i\in\{0,1,2,3\}$ long chains. Then
\begin{equation}
\label{eq:b3}
b_3^{(1)} = nc^3 + 2\lambda c^2 + (\alpha_2 + 3\alpha_3)c + \alpha_3.
\end{equation}

\begin{lemma}[\cite{wang1997structure,landgren2024newframeworkidentifyingreliable}]
\label{lem:b3_sparse}
Let $G$ be a sparse graph with $r$ redundant edges and $\lambda$ long chains. $G\in \mathcal{A}_3$ if and only if it has a marked structure graph which is 3-connected and cubic with only trivial 3-edge cut sets, and the number of long chains adjacent to each vertex differs by at most 1. More precisely:
\begin{enumerate}
    \item If $\lambda \leq r - 1$, the long chains form a matching (no two long chains are adjacent to the same vertex).
    \item If $r - 1 < \lambda \leq 2(r - 1)$, then exactly $2(\lambda - (r - 1))$ vertices are adjacent to two long chains, and the remaining vertices are adjacent to one.
    \item If $\lambda > 2(r - 1)$, the short chains form a matching.
\end{enumerate}
\end{lemma}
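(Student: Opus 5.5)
The plan is to carry the classes $\mathcal{A}_{k,j}$ through $k\le 3$ and then reduce the minimisation of $\gamma_{3,2}$ and $\gamma_{3,3}$ to a balanced distribution of long-chain incidences among the structure vertices.

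First, the structure graph. Since $G\in\mathcal{A}_2$, the discussion in Subsection~\ref{subsec:prev_res} (following \cite{wang1997structure}) already gives a 3-connected cubic structure graph whose chain lengths differ by at most one. Hence $b_1^{(1)}=b_2^{(1)}=0$. By Lemma~\ref{lem:Extension of coefficients comparison}, for large $n$ the next quantity to minimise is $\gamma_{3,0}=b_3(S(G))$. Each of the $2(r-1)$ structure vertices gives a 3-cut, so $b_3(S(G))\ge 2(r-1)$. Equality holds exactly when all 3-edge cuts are trivial, and this is achievable, for example by the girth-$\ge 4$ cubic graphs of Theorem~\ref{thm:Most-reliable-graphs-regular} with $k=3$, $g=4$ (or any cyclically 4-edge-connected cubic graph). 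So $\mathcal{A}_{3,0}$ consists exactly of marked structures whose structure graph has only trivial 3-cuts. On such graphs every chain lies in exactly two 3-cuts, one for each endpoint, so $\gamma_{3,1}=2\lambda$ regardless of placement, and $\mathcal{A}_{3,1}=\mathcal{A}_{3,0}$.

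Next, the placement of long chains. With only vertex cuts of size 3, a pair of long chains lies in a common 3-cut iff both are incident to the same vertex. This gives $\gamma_{3,2}=\alpha_2+3\alpha_3$ and $\gamma_{3,3}=\alpha_3$, which is formula \eqref{eq:b3}. The problem then becomes minimising $\sum_v\binom{d_v}{2}$, where $d_v\in\{0,1,2,3\}$ is the number of long chains at $v$. The constraints are $\sum_v d_v=2\lambda$ over $N=2(r-1)$ vertices, with ties broken by minimising $\alpha_3$. Since $\binom{x}{2}$ is strictly convex, a standard exchange argument shows the minimum is attained iff all $d_v$ differ by at most one. Once the $d_v$ are balanced, $\alpha_3$ is automatically minimal (it is $0$ unless forced). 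Reading off the three ranges gives the statement:
\begin{enumerate}
\item If $2\lambda\le N$, balanced means $d_v\in\{0,1\}$, so the long chains form a matching.
\item If $N<2\lambda\le 2N$, balanced means $d_v\in\{1,2\}$ with exactly $2\lambda-N=2(\lambda-(r-1))$ vertices of degree $2$.
\item If $2\lambda>2N$, balanced means $d_v\in\{2,3\}$, so the short chains have $3-d_v\le1$ at each vertex and form a matching.
\end{enumerate}
Since the coefficients are minimised in the order $\gamma_{3,0},\gamma_{3,1},\gamma_{3,2},\gamma_{3,3}$, this characterises $\mathcal{A}_3$ and gives both directions of the iff.

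The main obstacle is realisability. The argument needs a cubic graph with only trivial 3-cuts whose edge set contains a spanning subgraph with prescribed degrees in $\{0,1\}$, $\{1,2\}$ or $\{2,3\}$; otherwise the lower bound is not attained, and the structure graph and placement cannot be optimised separately. I would handle this with Petersen's theorem. A 3-connected (in particular bridgeless) cubic graph has a perfect matching $M$, and its complement is a 2-factor.
\begin{itemize}
\item In case 1, take $\lambda$ edges of $M$.
\item In case 3, take the complement of $3(r-1)-\lambda$ edges of $M$.
\item In case 2, start from the 2-factor (all $d_v=2$) and remove edges, or start from $M$ (all $d_v=1$) and add 2-factor edges. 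Each added edge raises two vertices from $1$ to $2$, so we add $\lambda-(r-1)$ such edges, which is at most $2(r-1)$, the number of 2-factor edges.
\end{itemize}
So the balanced placement exists on every admissible structure graph, and the minimisation over $(\text{structure},\text{placement})$ decouples as claimed.
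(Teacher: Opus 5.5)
Your treatment of $\gamma_{3,0}$ and $\gamma_{3,1}$ and your convexity argument for $\sum_v\binom{d_v}{2}$ are sound. They make the ``only if'' direction more explicit than the paper, which simply reads the lemma off \eqref{eq:b3}. The gap is the realisability step in case~2. Adding an edge of the 2-factor $F=E\setminus M$ raises two vertices from $1$ to $2$ only if it is disjoint from the other added edges; two added edges at a common vertex push that vertex to $d_v=3$. So you need a \emph{matching} of $F$ with $\lambda-(r-1)$ edges, and the bound ``at most $2(r-1)$'' is irrelevant. If $F$ has $o$ odd cycles, its largest matching has only $(r-1)-o/2$ edges, so the ``add'' route fails for $\lambda>2(r-1)-o/2$. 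For instance, in the Petersen graph ($r=6$) every 2-factor is two $5$-cycles, and $\lambda=10$ would require five pairwise disjoint edges in $C_5\cup C_5$, which do not exist. Your ``remove'' route is mentioned but not analysed, and it has the mirror-image restriction: you must delete a matching of $F$ of size $2(r-1)-\lambda$, which is possible only for $\lambda\ge (r-1)+o/2$.

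The paper avoids this issue. For the equivalence it suffices to exhibit \emph{one} admissible structure graph carrying a balanced placement, since that shows the lower bound on $b_3$ is attained. The paper takes the M\"obius ladder (a $2(r-1)$-cycle with chords $i\leftrightarrow i+r-1$), whose complementary 2-factor is a single even cycle. It also says explicitly that its second-matching argument does not apply to every cubic graph. If you want to keep your stronger claim that every admissible structure graph carries a balanced placement, you must combine the two routes. Every cycle of $F$ has at least three vertices, so $o\le 2(r-1)/3\le r-1$. Hence the ranges $\lambda\le 2(r-1)-o/2$ and $\lambda\ge (r-1)+o/2$ overlap and together cover all of case~2.

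A smaller point: routing through Lemma~\ref{lem:Extension of coefficients comparison} gives the characterisation only for large $n$, whereas the statement (see the remark after the paper's proof) holds for every $n$. To cover all $n$, note that for any marked structure in $\mathcal{A}_2$ we have $\gamma_{3,j}\ge\sum_v\binom{d_v}{j}$, because non-trivial 3-cuts only add terms. Each of these sums is minimised by balanced $d_v$. Hence the balanced trivial-cut configuration minimises $b_3^{(1)}=\sum_j\gamma_{3,j}c^{3-j}$ for every $c$, and any other minimiser must match it term by term, which forces only trivial 3-cuts and balanced $d_v$.
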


\begin{proof}
It follows from equation~\eqref{eq:b3} that a graph satisfying the lemma’s conditions lies in $\mathcal{A}_3$. By Petersen’s theorem~\cite{petersen1891theorie}, any bridgeless cubic graph has a perfect matching.

Since the structure graph has $2(r-1)$ vertices, this allows us to place $r-1$ long chains on the matching when $\lambda \leq r-1$, and symmetrically, place the short chains on the matching when $\lambda > 2(r-1)$. For intermediate values, $r-1 < \lambda \leq 2(r-1)$, we can begin with one perfect matching for $r-1$ long chains. After removing those edges, the remaining graph is 2-regular. If it decomposes into even-length cycles, we can find a second matching to host the extra $\lambda - (r-1)$ long chains, so that no vertex has more than two incident long chains.

Although not all cubic graphs guarantee this condition, we can always find at least one graph that satisfies the required properties. For example, take a cycle with $2(r-1)$ vertices and place the chords $i \leftrightarrow i + r-1 \mod 2(r-1)$ as the first perfect matching. This ensures both the required girth and a structure that supports two-edge matching.
\end{proof}

Note that the above Lemma works not only in the asymptotic case, as minimizing $b_3^{(1)}$ is solely dependent on minimizing $\alpha_2$.

Now consider minimizing $b_4^{(1)}$. To minimize $\gamma_{4,0}$, the structure graph must have a girth of at least 5(if possible) and contain only trivial cut sets. Lemma~\ref{lem:girth6} guarantees the existence of such graphs for $r \geq 8$, and for $r = 6, 7$ this is confirmed by the most reliable graphs presented in Figure~\ref{fig:most_reliable_graphs}. 

Minimizing $\gamma_{4,2}$ is equivalent to minimizing $\mu_2$, the number of long chain pairs at distance 2. To minimize $\gamma_{4,3}$, we must reduce the number of long chain triplets that participate in a 4-edge cut set. Each trivial minimal cut-set with four edges is formed by the four chains adjacent to a given chain. If three of them are long, they contribute 1 to $\gamma_{4,3}$; if all four are long, they contribute 4.

Let $\beta_i$ denote the number of chains that are adjacent to exactly $i\in\{0,1,2,3,4\}$ long chains. Then, to minimize $\gamma_{4,3}$ one must minimize $\beta_3 + 4\beta_4$, and to minimize $\gamma_{4,4}$ one must minimize $\beta_4$.

\medskip

Based on the analysis above, we can summarize the structural conditions of the asymptotically most reliable marked structure near zero.

\begin{lemma}[Properties of the asymptotically most reliable marked structure near zero]
\label{lem:properties_sparse}
Suppose $r \geq 2$. There exists $n_0$ such that for all $n > n_0$, any most reliable graph near zero with $n$ vertices and $r$ redundant edges has a marked structure graph that satisfies:
\begin{enumerate}
    \item The structure is cubic and 3-connected, with only trivial 3-edge cut sets and chain lengths differing by at most 1.
    \item The girth is at least $4$ if $r \geq 4$, and at least $5$ if $r \geq 6$.
    \item The following parameters are minimized sequentially:
    \[
    \xrightarrow{r \geq 4} \mu_1 \xrightarrow{r \geq 5} \mu_2 \rightarrow \beta_3 + 4\beta_4 \rightarrow \beta_4 \xrightarrow{g \geq 6} \mu_3.
    \]
    The arrows indicate that each parameter should be minimized only if the condition above the arrow holds. For example, $\mu_1$ is minimized when $r \geq 4$, and $\mu_3$ is minimized only if a graph satisfying all previous conditions exists and has girth at least 6.

\end{enumerate}
\end{lemma}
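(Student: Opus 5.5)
The plan is to reduce everything to Lemma~\ref{lem:Extension of coefficients comparison}. For $n>n_0(r)$, a most reliable graph near zero lex-minimizes $(\gamma_{k,j})$ in the order $(k,j)$. So it suffices to run through the classes $\mathcal{A}_{k,j}$ for $k\le 4$ and $j\le 4$, and at each step identify which structural parameter is being minimized. At each step we also need to exhibit at least one marked structure attaining the minimum, so that passing to the next class is meaningful.

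Item 1 follows from the classes up to $\mathcal{A}_{3,3}$. Minimizing $b_1$ and $b_2$ forces a 3-connected cubic structure with balanced chains, as recalled from \cite{wang1997structure}. Minimizing $\gamma_{3,0}=b_3(S(G))$ forces only trivial 3-edge cut sets. Lemma~\ref{lem:b3_sparse} then handles $\gamma_{3,1},\gamma_{3,2},\gamma_{3,3}$ through \eqref{eq:b3}, which gives the balanced incidence of long chains at vertices. The $\mu_1$ arrow also appears here. By Lemma~\ref{lem:minimize_gamma_k2} with $k=3$, minimizing $\gamma_{3,2}$ amounts to minimizing $\mu_1$, the number of adjacent long-chain pairs, i.e.\ $\alpha_2+3\alpha_3$. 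This is only a genuine constraint once $r\ge4$, because then the 3-connected cubic graphs on $2(r-1)$ vertices admit the relevant girth and a matching.

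Items 2 and 3 come from the $k=4$ level. For $\gamma_{4,0}=b_4(S(G))$, a triangle gives a non-trivial cut set of size $3(k-2)=3$; more precisely, a $g$-cycle yields a cut set of size $g$ by Proposition~\ref{prop:The_size_of_the_induced_cut_set}. So Theorem~\ref{thm:Most-reliable-graphs-regular} and Corollary~\ref{cor:no-ntmcs-optimal} apply with $g=4$ or $5$. To use them I must exhibit a cubic graph on $2(r-1)$ vertices that has girth at least $4$ (for $r\ge4$) or at least $5$ (for $r\ge6$) and no non-trivial small cut sets. For the larger $r$ I would use Lemma~\ref{lem:girth6}'s construction. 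For $r=6,7$ I would use the graphs of Figure~\ref{fig:most_reliable_graphs}, and for girth 4 with $r\ge4$ a Möbius-ladder-type graph. Once the girth is fixed, Lemma~\ref{lem:cut_set_contain_two_edges} shows that $\gamma_{4,1}$ is placement-invariant. Lemma~\ref{lem:minimize_gamma_k2} shows that $\gamma_{4,2}$ reduces to $\mu_2$ (relevant for $r\ge5$). The counting of 4-cut sets as the four chains adjacent to a given chain shows that $\gamma_{4,3}$ and $\gamma_{4,4}$ reduce to $\beta_3+4\beta_4$ and then $\beta_4$. Finally, if the surviving structures have girth at least $6$, all 5-cut sets are trivial. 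Lemma~\ref{lem:minimize_gamma_k2} with $k=5$ then gives $\mu_3$ after $\gamma_{5,0},\gamma_{5,1}$ are fixed; $\gamma_{4,j}$ do not interfere.

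The main obstacle is the bookkeeping in item 3. First, $\gamma_{k,1}$ must be shown invariant, and $\gamma_{k,2}$ must depend only on $\mu$, \emph{within} the class surviving the previous minimizations; this requires that every surviving structure has only trivial cut sets of the relevant size, so the hypothesis of Lemma~\ref{lem:cut_set_contain_two_edges} holds. Second, the small-$r$ thresholds on the arrows must be checked, e.g.\ why $\mu_1$ is only forced for $r\ge4$. I would handle this by inductively carrying the invariant ``the current class equals $\mathcal{D}(g)$ restricted to marked placements'' and checking the small cases $r\le5$ directly by listing the few cubic graphs.
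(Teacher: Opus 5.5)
Your proposal follows the same route as the paper, whose only justification is the discussion just before the lemma: walk through the $(k,j)$-lexicographic classes $\mathcal{A}_{k,j}$ given by the extended coefficient comparison lemma. At $k=3$ use Lemma~\ref{lem:b3_sparse}; at $\gamma_{4,0}$ use the existence of girth-$\ge5$ structure graphs without small non-trivial cuts (Lemma~\ref{lem:girth6}, and the figure for $r=6,7$); use Lemma~\ref{lem:minimize_gamma_k2} for $\mu_2$ and $\mu_3$; and count the four chains adjacent to a chain to get $\beta_3+4\beta_4$ and then $\beta_4$.

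The weak points you flag are shared with the paper rather than specific to your plan, with two clarifications:
\begin{itemize}
\item At $r=5$ every admissible $8$-vertex structure graph has girth exactly $4$. Its $4$-cycles therefore give non-trivial $4$-edge cuts, so Lemma~\ref{lem:cut_set_contain_two_edges} cannot be used at $k=4$, and the direct check you propose is genuinely needed there.
\item Your stated reason for the threshold $r\ge4$ on $\mu_1$ is off: already $K_4$ at $r=3$ forces $\mu_1=0$ when $\lambda=2$. This is harmless, since the statement only asserts the $\mu_1$ minimization for $r\ge4$.
\end{itemize}
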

The parameters minimized in Lemma~\ref{lem:properties_sparse} refer to the \emph{reliability obstruction parameters}. An example of applying this Lemma to find the optimal long-chain configuration is presented in Subsection~\ref{subsec:marked}. 

\subsubsection{$(j,k)$ Minimization}
In contrast to the $(k,j)$ order, minimizing the coefficients $\gamma_{k,j}$ lexicographically in the transposed order $(j,k)$ simplifies the analysis, as it allows us to focus primarily on the case $j \leq 2$.

As established in previous sections, the structure that minimizes all $\gamma_{k,0} = b_k$ terms sequentially is the most reliable cubic graph with $2(r - 1)$ vertices near zero. Furthermore, when the structure graph has only trivial cut sets of size at most $k$, the coefficients $\gamma_{k,1}$ are invariant under the arrangement of long chains. Therefore, the first non-trivial dependence on the chain placement arises in $\gamma_{g,1}$, where $g$ is the size of the smallest non-trivial cut set in the structure graph.

To minimize the coefficients $\gamma_{k,1}$ sequentially in $k$, we assign a rank to each chain and select the $\lambda$ chains with the lowest ranks. For each chain $e$, define its reliability vector as
\[
\rho^{(e)} := (\rho_3^{(e)}, \rho_4^{(e)}, \dots, \rho_{3(r-1)}^{(e)}),
\]
where $\rho_k^{(e)}$ denotes the number of $k$-edge cut sets in the structure graph that include the chain $e$. Then, the total contribution to $\gamma_{k,1}$ in the marked structure graph $G$ from all chains is given by
\[
\gamma_{k,1} = \sum_{e \in E(G)} \rho_k^{(e)}.
\]
Let $\mathrm{rank}(e)$ denote the lexicographic rank of $\rho^{(e)}$ among all chains in the structure graph. The $\lambda$ long chains should then be assigned to the $\lambda$ chains with the smallest ranks. For example, if $\lambda = 7$, and there are 4 chains of rank 1 and 5 chains of rank 2, then the 4 rank-1 chains must be selected, along with 3 of the 5 rank-2 chains.

As stated above, minimizing $\gamma_{k,2}$ for $k < g$ is equivalent to minimizing $\mu_{k-2}$, the $(k-2)$-entry of the separation vector. We conclude that the optimal arrangement of long chains under $(j,k)$ minimization must:
\begin{enumerate}
    \item Assign the long chains to the $\lambda$ chains with lowest $\mathrm{rank}(e)$;
    \item Among all such assignments, minimize the separation vector $\mu$ lexicographically.
\end{enumerate}

We summarize these observations in the following lemma:

\begin{lemma}[$(j,k)$ Minimization Criteria]
\label{lem:minimizing_reverse}
Let $r \geq 2$ and $0 \leq \lambda < 3(r - 1)$. Then any marked structure graph that minimizes the coefficients $\gamma_{k,j}$ in transposed lexicographic order $(j,k)$ must satisfy:
\begin{enumerate}
    \item The structure graph is the most reliable cubic graph with $2(r - 1)$ vertices near zero.
    \item The long chains are assigned to the $\lambda$ edges with the lowest reliability load ranks $\mathrm{rank}(e)$.
    \item Among all such long chains assignments, the separation vector $\mu$ is minimized in lexicographic order.
\end{enumerate}
\end{lemma}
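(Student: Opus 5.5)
The proof is a sequential filtering through the transposed order of Lemma~\ref{lem:transpose_coeff_comp}: first the block $j=0$, then $j=1$, then $j=2$. Each block is handled by an earlier result. At every stage we keep only the marked structures that survive the previous stages and check what the next block of coefficients forces.

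\emph{Step 1 (the block $j=0$).} The first coefficients in the $(j,k)$ order are $\gamma_{1,0},\gamma_{2,0},\dots,\gamma_{3(r-1),0}$. Since $\gamma_{k,0}=b_k(S(G))$, these depend only on the structure graph and not on where the long chains are placed. Any marked structure that is minimal in the transposed order must therefore have a structure graph that minimizes $(b_1,b_2,\dots)$ lexicographically among cubic graphs with $2(r-1)$ vertices. By Lemma~\ref{lem:coefficients_comparison}, this is exactly the most reliable cubic graph near zero, which gives item~1. If several such graphs tie on every $b_k$, they have identical unreliability polynomials, so item~1 holds for each of them.

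\emph{Step 2 (the block $j=1$).} With the structure graph fixed, $\gamma_{k,1}=\sum_{e\in L}\rho_k^{(e)}$, where $L$ is the set of long chains. Minimizing $\gamma_{3,1},\gamma_{4,1},\dots$ in order means minimizing the vector $\sum_{e\in L}\rho^{(e)}$ lexicographically over $\lambda$-subsets $L$. Lexicographic order on $\mathbb{Z}^N$ is compatible with addition, meaning $a\le b\Rightarrow a+c\le b+c$. An exchange argument therefore shows that an optimal $L$ consists of $\lambda$ chains with the smallest $\rho^{(e)}$. If some $e\in L$ and $f\notin L$ had $\rho^{(f)}<_{\mathrm{lex}}\rho^{(e)}$, swapping them would strictly decrease the sum. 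Conversely, every set of $\lambda$ lowest-rank chains gives the same sum, so all of them survive. This gives item~2.

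\emph{Step 3 (the block $j=2$).} Among the surviving placements we minimize $\gamma_{3,2},\gamma_{4,2},\dots$ in order. For $k<g$, where $g$ is the smallest non-trivial cut size, I will reuse the decomposition from the proof of Lemma~\ref{lem:minimize_gamma_k2}: $\gamma_{k,2}=\sum_{i\le k-2}\gamma^{(i)}_{k,2}$. By Lemma~\ref{lem:cut_set_contain_two_edges}, each $\gamma^{(i)}_{k,2}$ equals $\mu_i$ times a constant that depends only on $(k,i)$, and this constant is positive when $i=k-2$, since adjacent-type trivial cuts contain such a pair. The same induction on $k$ as in Lemma~\ref{lem:minimize_gamma_k2} then shows that minimizing $\gamma_{k,2}$ for $k=3,\dots,g-1$ is equivalent to minimizing $\mu_1,\dots,\mu_{g-3}$ lexicographically. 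This gives item~3.

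\emph{Main obstacle.} The delicate point is Step~3. The restriction to lowest-rank assignments in Step~2 must not interfere with the inductive argument, and the positivity of the constant multiplying $\mu_{k-2}$ in $\gamma_{k,2}$ needs a short justification. That constant is the number of trivial $k$-cuts, induced by a subtree with $k-2$ vertices as in Proposition~\ref{prop:The_size_of_the_induced_cut_set}, that contain two given edges at distance $k-2$. It is positive because the path joining the two edges is such a tree. Since the lemma claims only necessary conditions, coefficients with $k\ge g$ need not be controlled, and the statement is just the conjunction of Steps~1 through~3.
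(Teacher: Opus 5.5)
Your proposal is correct and follows essentially the paper's own argument: the $j=0$ block fixes the most reliable cubic structure graph, the $j=1$ block reduces through $\gamma_{k,1}=\sum_{e\in L}\rho^{(e)}_k$ to choosing the $\lambda$ lowest-rank chains, and the $j=2$ block reduces through Lemmas~\ref{lem:cut_set_contain_two_edges} and~\ref{lem:minimize_gamma_k2} to lexicographic minimization of $\mu$; your exchange argument and path-tree positivity check make explicit what the paper only asserts. One refinement of the point you flag as delicate, inherited from the paper: $\Gamma_k$ also contains non-minimal disconnecting sets (a vertex star plus arbitrary edges), so pairs at every distance contribute to $\gamma_{k,2}$ (for instance $\gamma_{4,2}=4\binom{\lambda}{2}+(3(r-1)-4)\mu_1+\mu_2$ when the girth is at least $5$), and the quantity that must be positive is the excess of the count at distance $k-2$ over the common count for farther pairs; this excess is the number of $(k-2)$-vertex trees whose boundary contains both edges, which your path argument shows is positive, so the conclusion is unaffected.
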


In the following Subsection~\ref{subsec:maximize_the_tree_number}, we show that the structure graphs of uniformly most reliable graphs in the asymptotic regime typically exhibit symmetry. In such cases, the $\mathrm{rank}(e)$ values are often equal across all chains, reducing the optimization to the minimization of the separation vector alone.

\subsection{Maximizing the Tree Number}
\label{subsec:maximize_the_tree_number}

We now shift our attention to finding sparse graphs that maximize the tree number, i.e., are most reliable near one. This result can establish the non-existence of a uniformly most reliable graph. We do so by constructing a graph in which chain lengths differ by more than one, which achieves a higher number of spanning trees. Let the number of edges in each chain $e_i$ be $c+r_i$, where $r_i$ can take any integer value, including negative values.

Similar to the extended coefficient comparison method, the tree number can be expressed as a polynomial in the variable $c$. Each spanning tree of the graph is induced from a spanning tree of its structure graph. Moreover, each spanning tree can contain at most one failing edge from a given chain. Consequently, each spanning tree $T$ of the structure graph contributes $ \prod_{e_i \notin E(T)} (c+r_i) $ spanning trees to the total count in the whole graph. 

Since each such product contains exactly $ r = m - (n-1) $ terms, summing over all spanning trees of the structure graph and expanding the multiplication terms yields the following formula for the total number of spanning trees:

\begin{equation}
T(G) = \sum_{X \subseteq E(S(G))} \left( T(S(G) - X) \prod_{e_i \in X} r_i \right) \cdot c^{r - |X|}
\end{equation}
where $ T(S(G) - X) $ represents the number of spanning trees in the structure graph that do not contain the chains in $ X $.

If $ \{r_i\} $ are constants and $ c $ is large, the coefficients of the higher powers of $c$ dominate. The coefficient of $ c^r $ is $ T(S(G)) $, representing the number of spanning trees in the structure graph. Consequently, if the most reliable cubic graph near zero does not maximize the number of spanning trees, then for sufficiently large $ c $, a uniformly most reliable graph cannot exist.

\begin{lemma}[The Structure of $ t $-Optimal Graph] 
    Let $r >1$, and assume that the most reliable cubic graph with $ 2(r-1) $ vertices near zero does not maximize the number of spanning trees among graphs with the same number of vertices and edges. Then, there exists $ n_0 $ such that for all $ n > n_0 $, no uniformly most reliable graph exists with $ n $ vertices and redundancy $ r $.
\end{lemma}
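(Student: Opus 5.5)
The plan is to argue by contradiction. Suppose that for infinitely many $n$ with redundancy $r$ a uniformly most reliable graph $G_n$ exists. Such a $G_n$ is most reliable both near zero and near one, and each property yields a structural constraint. For $n$ large the two constraints clash.

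\emph{Near zero.} By the Transposed Coefficient Compression Lemma~\ref{lem:transpose_coeff_comp} and Lemma~\ref{lem:minimizing_reverse}, for $n>n_0$ the structure graph $S(G_n)$ must be the most reliable cubic graph near zero on $2(r-1)$ vertices; call it $S^*$. Its chain lengths differ by at most one, so $r_i\in\{0,1\}$ relative to $c$, where $c$ is defined by~\eqref{eq:c_lambda}. Note that $c\to\infty$ as $n\to\infty$ because $r$ is fixed.

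\emph{Near one.} A uniformly most reliable graph must maximize $T(G)$ among all graphs with $n$ vertices and $m=n+r-1$ edges. By hypothesis there is a cubic graph $S'$ on $2(r-1)$ vertices with $T(S')>T(S^*)$. Subdivide $S'$ with the same chain-length profile: $3(r-1)$ chains, $\lambda$ of them long. The resulting graph $H_n$ has exactly $n$ vertices and redundancy $r$. Apply the expansion
\[
T(G)=\sum_{X\subseteq E(S(G))} T(S(G)-X)\prod_{e_i\in X} r_i \, c^{\,r-|X|}
\]
to both $G_n$ and $H_n$. The leading terms are $T(S^*)c^r$ and $T(S')c^r$. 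Every other term has degree at most $r-1$ in $c$, and its coefficient is bounded by a constant depending only on $r$. The reason is that $r_i\in\{0,1\}$ and there are at most $2^{3(r-1)}$ subsets $X$, each contributing at most the number of spanning trees of a fixed-size graph. Hence
\[
T(H_n)-T(G_n)\ge \bigl(T(S')-T(S^*)\bigr)c^r - K_r c^{r-1},
\]
which is positive once $c>K_r$, i.e.\ for $n$ large. So $G_n$ is not most reliable near one, a contradiction.

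One point needs care: the claim that the structure graph of any near-zero optimum is cubic, $3$-connected, and balanced must hold for all competitors. That is already given by the $\mathcal{A}_2$ analysis quoted from \cite{wang1997structure}. We therefore never need to compare $T$ against non-cubic structures; exhibiting the single competitor $H_n$ suffices.

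The main obstacle is the near-zero step. Lemma~\ref{lem:transpose_coeff_comp} guarantees optimality only on a $p$-interval that depends on $c$. I will take from it only the consequence that a uniformly most reliable graph must have $\gamma_{k,0}=b_k(S)$ lexicographically minimal. If one only wants the simplest route, it is enough to use item 1 of Lemma~\ref{lem:minimizing_reverse}, together with uniqueness up to isomorphism of the optimal cubic structure; if that structure is not unique, apply the hypothesis to every near-zero-optimal cubic graph. The tree-number estimate itself is routine.
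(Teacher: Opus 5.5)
Your proposal is correct and follows essentially the paper's argument. The transposed coefficient comparison forces the structure of a uniformly most reliable graph to be the near-zero-optimal cubic graph $S^*$; the expansion of $T(G)$ in powers of $c$, whose leading coefficient is $T(S(G))$, then shows that subdividing a structure with more spanning trees gives a better graph near one for large $c$. Your explicit competitor $H_n$ and the bound $(T(S')-T(S^*))c^r-K_rc^{r-1}$ make rigorous what the paper leaves informal. One small correction: the hypothesis only supplies a better $S'$ among all graphs with $2(r-1)$ vertices and $3(r-1)$ edges, not necessarily a cubic one, but your subdivision construction and expansion work verbatim for any such $S'$.
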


The second most dominant factor in the number of spanning trees is given by:
\[
(c+1)^{r-1} \cdot \sum_{e_i\in E(S(G))} T(S(G) - e_i) r_i.
\]
Since the sum of $ \{r_i\} $ is constant, maximizing this quantity requires increasing $ r_i $ for chains with a high number of spanning trees $ T(S(G) - e_i) $. 

Consider two chains $ e_1 $ and $ e_2 $ such that $ T(S(G) - e_1) > T(S(G) - e_2) $. In this case, we say that the structure graph $ S(G) $ is not \emph{tree-balanced}. If a graph is the most reliable near zero, then $ r_i \in \{0,1\} $. However, if the structure graph is not tree-balanced, we can increase $ r_1 $ and decrease $ r_2 $, thereby increasing the coefficient of $ (c+1)^{r-1} $. This adjustment yields a graph with more spanning trees when $ c $ is large. Consequently, if the most reliable structure graph near zero is not tree-balanced, a uniformly most reliable graph does not exist for sufficiently large $ c $.

\begin{lemma}[Non-Tree-Balanced Graphs] \label{lem:non_UMRG_not_t_balanced}
    Let $ r > 1 $, and assume that the most reliable cubic graph near zero with $ 2(r-1) $ vertices is not tree-balanced. Then, there exists $ n_0 $ such that for all $ n > n_0 $, no uniformly most reliable graph exists with $ n $ vertices and $ r $ redundant edges.
\end{lemma}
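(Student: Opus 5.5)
We argue by contradiction. Suppose that for infinitely many $n$ a uniformly most reliable graph $G_n$ with $n$ vertices and redundancy $r$ exists. Being uniformly most reliable, $G_n$ is in particular most reliable near zero. By Lemma~\ref{lem:transpose_coeff_comp} and Lemma~\ref{lem:minimizing_reverse}, once $n$ is large enough its structure graph is the most reliable cubic graph near zero on $2(r-1)$ vertices; call it $S$. It also lies in $\mathcal{A}_2$, so its chain lengths are $c+r_i$ with $r_i\in\{0,1\}$ and $\sum_i r_i=\lambda$. On the other hand, $G_n$ must also be most reliable near one. By the discussion at the start of Subsection~\ref{subsec:maximize_the_tree_number}, this means $G_n$ maximizes the tree number among all graphs in $\mathcal{G}_{n,m}$. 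So it suffices to construct, for every large $n$, a competitor $H_n$ with the same $n$ and $m$ and $T(H_n)>T(G_n)$.

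\textbf{Construction.} Since $S$ is not tree-balanced, there are chains $e_1,e_2$ of $S$ with $T(S-e_1)>T(S-e_2)$. Let $H_n$ have the same structure graph $S$ and the same chain lengths as $G_n$, except that $r_1$ is replaced by $r_1+a$ and $r_2$ by $r_2-a$ for a fixed integer $a\ge 1$ (take $a=2$, say, so the result is genuinely unbalanced regardless of the original $r_1,r_2$). For $c$ large all chains keep positive length, and the numbers of vertices and edges are unchanged.

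\textbf{Comparing tree numbers.} Use the expansion
\[
T(G)=\sum_{X\subseteq E(S)} T(S-X)\prod_{e_i\in X} r_i\, c^{r-|X|},
\]
which is a polynomial in $c$ of degree $r$ whose coefficients depend only on $S$ and the bounded offsets $r_i$.
\begin{itemize}
\item The $c^{r}$ coefficient equals $T(S)$ for both graphs.
\item The $c^{r-1}$ coefficients differ by
\[
a\bigl(T(S-e_1)-T(S-e_2)\bigr)\ge 1.
\]
\item The lower coefficients are bounded in absolute value by a constant depending only on $r$ and $a$. This holds because $S$ is a fixed graph for fixed $r$ (up to the finitely many choices of most reliable cubic graph) and all $|r_i|\le a+1$.
\end{itemize}
Hence $T(H_n)-T(G_n)\ge c^{r-1}-K c^{r-2}$ for a constant $K=K(r)$, which is positive once $c>K$. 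Since $c$ grows linearly in $n$ by~\eqref{eq:c_lambda}, this gives $n_0$.

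\textbf{Main obstacle.} The delicate step is the uniformity of the bound. The set of possible structure graphs $S$ and the placements of long chains must be finite and independent of $n$. This is true because $S$ has $2(r-1)$ vertices and the long-chain sets are subsets of its $3(r-1)$ edges, so we can take a maximum of $K$ over finitely many configurations.

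A second point to check is that "uniformly most reliable" forces maximality of $T$. This follows from $b_r=\binom{m}{r}-T(G)$ together with part 2 of Lemma~\ref{lem:coefficients_comparison}, since the coefficients $b_k$ with $k>r$ are identical across $\mathcal{G}_{n,m}$.
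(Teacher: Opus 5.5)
Your proposal is correct and is essentially the paper's argument. The transposed coefficient comparison forces a uniformly most reliable graph to have the most reliable cubic graph near zero as its structure, with balanced chains. Moving chain length toward an edge $e_1$ with $T(S-e_1)>T(S-e_2)$ then increases the $c^{r-1}$ coefficient $\sum_e T(S-e)\,r_e$ of the tree-number polynomial, which contradicts optimality near one. You make explicit the uniform bound on the lower-order terms that the paper's sketch leaves implicit. Note that $a=1$ would already suffice, since any competitor with more spanning trees, balanced or not, gives the contradiction.
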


In Figure~\ref{fig:most_reliable_graphs}, we analyze each reliable graph near zero to determine whether it is tree-balanced. This analysis reveals infinite families of graphs with the same redundancy and a high number of vertices for which a uniformly most reliable graph does not exist. By applying Kirchhoff's theorem, we efficiently compute the tree number, confirming that graphs with a low number of redundant edges tend to be balanced, whereas those with a high number of redundant edges are typically not. This highlights the crucial role of symmetry in the existence of uniformly reliable graphs.

\section{Identifying Uniformly Most Reliable Graphs}
\label{sec:most_reliable}

\subsection{Framework Overview}
Our goal is to determine, for a given redundancy level $r$ and number of long chains $0 \leq \lambda < 3(r - 1)$, whether a uniformly most reliable graph exists in the asymptotic regime (i.e., for sufficiently high number of nodes $n$), and if so, to identify its marked structure.

The method proceeds in three stages:
\begin{enumerate}
    \item Identify candidate cubic graphs on $2(r-1)$ vertices that are most reliable near zero.
    \item For each candidate, determine placements of $\lambda$ long chains that independently minimize the reliability rank $\rho$, the separation vector $\mu$ and the reliability obstruction parameters from Lemma~\ref{lem:properties_sparse}. A placement achieving both minima is called an \emph{optimal long-chain configuration}.
    \item Verify whether the resulting structure graph is tree-balanced and admits an optimal long-chain configuration. If both conditions hold, the graph is a candidate for being uniformly most reliable. Otherwise, this contradicts the existence of a uniformly most reliable graph for the given values of $r$ and $\lambda$ and a sufficiently large number of nodes $n$. 
\end{enumerate}

\subsection{Algorithmic Procedure}
\label{subsec:algorithm}
The following algorithm formalizes the identification process for the most reliable marked structure graph.

\begin{algorithm}[H]
\caption{Identification of a uniformly most reliable marked structure}
\label{alg:most_reliable}
\begin{algorithmic}[1]
\Require Redundancy level $r$ and number of long chains $\lambda$
\State Select a cubic graph $G$ on $2(r-1)$ vertices that is most reliable near zero,
obtained via enumeration of maximal-girth cubic graphs.
\If{$G$ is not tree-balanced or does not maximize the number of spanning trees}
    \State \Return No uniformly most reliable marked structure exists.
\EndIf
\State Let $E_{\mathrm{opt}} \subseteq E(G)$ be the set of edges attaining the minimal values of $\rho^{(e)}$.
\State Compute the minimal separation vector $\mu$ among all $\lambda$-edge subsets of $E(G)$.
\State Compute the minimal reliability obstruction parameters (Lemma~\ref{lem:properties_sparse}) among all $\lambda$-edge subsets of $E(G)$.
\State Attempt to find a $\lambda$-edge subset $S \subseteq E_{\mathrm{opt}}$ that simultaneously attains both minima.
\If{no such subset exists}
    \State \Return No uniformly most reliable marked structure exists.
\EndIf
\State Let $\gamma_{k,j}$ denote the coefficient vector induced by the marked structure $(G,S)$.
\If{there exists a cubic graph $G'$ on $2(r-1)$ vertices whose optimal marked structure yields a strictly smaller coefficient vector than $\gamma_{k,j}$ in lexicographic order}
    \State \Return No uniformly most reliable marked structure exists.
\Else
    \State \Return $(G,S)$ is a candidate uniformly most reliable marked structure.
\EndIf
\end{algorithmic}
\end{algorithm}

\subsection{Computational Complexity and Graph Enumeration}
The computational complexity of the method depends primarily on the redundancy parameter $r$. The structure graph used in the sparse case has $n_s = 2(r - 1)$ vertices. The algorithm is linear in the number of vertices of the original graph, and exponential in the number of redundant edges.

The main computational bottleneck is generating all $3$-regular graphs with prescribed girth $g$. For fixed $g$, the number of such graphs with $n_s$ vertices and girth at least $g$ grows as $\Theta(e^{\alpha_g n_s})$ for some constant $\alpha_g > 0$~\cite{mckay1990asymptotic}. Explicit enumeration is tractable for small $n_s$ using isomorphism-free generation tools such as \texttt{genreg}~\cite{meringer1999fast} and \texttt{snarkhunter}~\cite{Brinkmann2015GenerationOC}, whose running time is polynomial per generated graph and therefore dominated by the total number of outputs.

Since our algorithm only considers graphs with maximal girth, the number of graphs to enumerate may, in fact, decrease as the redundancy parameter increases. For example, there are $385$ cubic graphs with $22$ vertices and girth $6$ (which is maximal), but only a single graph with $24$ vertices and girth $7$, namely the unique $(3,7)$-cage. Some of the results obtained via these enumeration algorithms are available through the House of Graphs database~\cite{coolsaet2023house}.

Once all cubic graphs with maximal girth are enumerated, the algorithm verifies which of them contain non-trivial cut sets. This involves checking all connected subgraphs of size at most $n_s/2$, which is exponential in $n_s$.

The unreliability polynomial of the structure graph can be computed exactly in $O(2^{3r})$ time. If only the first $t < 3(r - 1)$ terms are required, this reduces to $O(2^t)$. Moreover, for graphs with small treewidth, tree-decomposition-based algorithms can be applied to compute the unreliability polynomial more efficiently, as discussed in the next subsection.

To identify long-chain configurations that minimize both the separation vector and the parameters defined in Lemma~\ref{lem:properties_sparse}, we formulate the problem as a linear programming optimization. The construction of the linear program and details of its implementation are described in Section~\ref{subsec:marked}.

In summary, the algorithm's complexity is governed primarily by $r$ and $\lambda$ and depends only weakly on $n$. In fact, the total number of graphs to consider may decrease as $r$ increases, due to the scarcity of graphs with maximal girth. For small values of $r$, the algorithm performs well in practice. Determining the number of cubic graphs with maximal girth for arbitrary vertex counts remains an interesting open problem.

\subsection{Database of Cubic Graphs}
\label{subsec:cubic_db}
Another key contribution of our research is the establishment of a reliability database. This database contains 5,535 cubic graphs, along with their properties and unreliability polynomials. It serves as a valuable tool for testing hypotheses and applying the algorithms developed in this study, such as computing the most reliable graphs near zero and assessing the non-existence of a uniformly most reliable graph.

The database is publicly available, enabling other researchers to explore unreliability polynomials and investigate additional research questions, such as identifying uniformly optimal graphs and detecting the roots and inflection points of the reliability curves, which is itself an important topic of investigation~\cite{Agrawal1984ASO,RootsRel}. The initial collection of cubic graphs is taken from the House of Graphs database~\cite{coolsaet2023house}.

Constructing such a database is challenging because computing the reliability of a single graph is exponential in complexity. To address this, we utilized an algorithm based on tree decomposition~\cite{goharshady2020efficient}, which computes graph reliability with a complexity of $O(n \cdot \omega^3 \cdot 2^{(\omega+2)(\omega+1)} )$, where $ \omega $ is the treewidth of the graph. In sparse graphs, the treewidth is typically small, leading to a significant improvement in computational performance.

Our primary use of the database is the finding of the most reliable cubic graphs near zero, summarized in Figure~\ref{fig:most_reliable_graphs}, and the identification of the marked structure graphs that are the only candidates to be the uniformly most reliable graphs in the asymptotic regime, or to prove the non-existence of such graphs. In the next subsection, we show how to use the database to identify the only candidates for the uniformly most reliable graphs.

\subsection{Results on Most Reliable Marked Structures}
\label{subsec:marked}
We apply Algorithm~\ref{alg:most_reliable} to identify asymptotically most reliable graphs. As shown in Figure~\ref{fig:most_reliable_graphs}, the only tree-balanced cubic graphs that are most reliable near zero occur for $r \in \{2,3,4,6,8,9,16\}$. The cases $r \leq 4$ have already been resolved in prior work~\cite{boesch1991existence,wang1994proof,romero2017building,Rela2017PetersenGI,canale2019building}. This leaves the sparse asymptotic cases $r \in \{6,8,9,16\}$ for further investigation.

For $r \leq 9$, our database includes all cubic graphs with fewer than 9 redundant edges. This allows us to exhaustively examine the most reliable structure graphs and determine whether the optimal cubic graph near zero also maximizes the tree number, and whether its long chain configuration minimizes the terms in Lemma~\ref{lem:properties_sparse}. Our initial hypothesis was that graphs with lower girth might be more reliable in the asymptotic regime, as the presence of shorter cycles could improve the separation vector among long chains, potentially leading to higher reliability near $p=0$.

Consistent with this hypothesis, we found an example in the case $r = 8$ in which a marked structure graph with lower girth outperforms the most reliable structure near zero. Specifically, for $\lambda \in \{4,17\}$, Figure~\ref{fig:long_chains} shows two candidates: graph (1), with girth 5 and separation coefficient $\mu_2 = 0$, and graph (2), with girth 6 but separation coefficient $\mu_2 = 1$. Although the second graph has a more reliable structure graph, the first graph is asymptotically more reliable near zero due to superior separation.

This advantage arises because the 2-ball around the long chains $(0,4)$ and $(4,12)$ in graph (1) includes only 17 chains, while the 2-ball around $(4,9)$ and $(6,7)$ in graph (2) includes 19. The key topological feature responsible for this is the presence of two interconnected 5-cycles in graph (1), highlighted in black. This fused cycle configuration yields a tighter 2-neighborhood and better separation among chains. We hypothesize that such intertwined 5-cycle motifs are instrumental in designing 5-girth structure graphs with optimal separation vectors.

\begin{figure}
    \centering
    \includegraphics[width=0.7\linewidth]{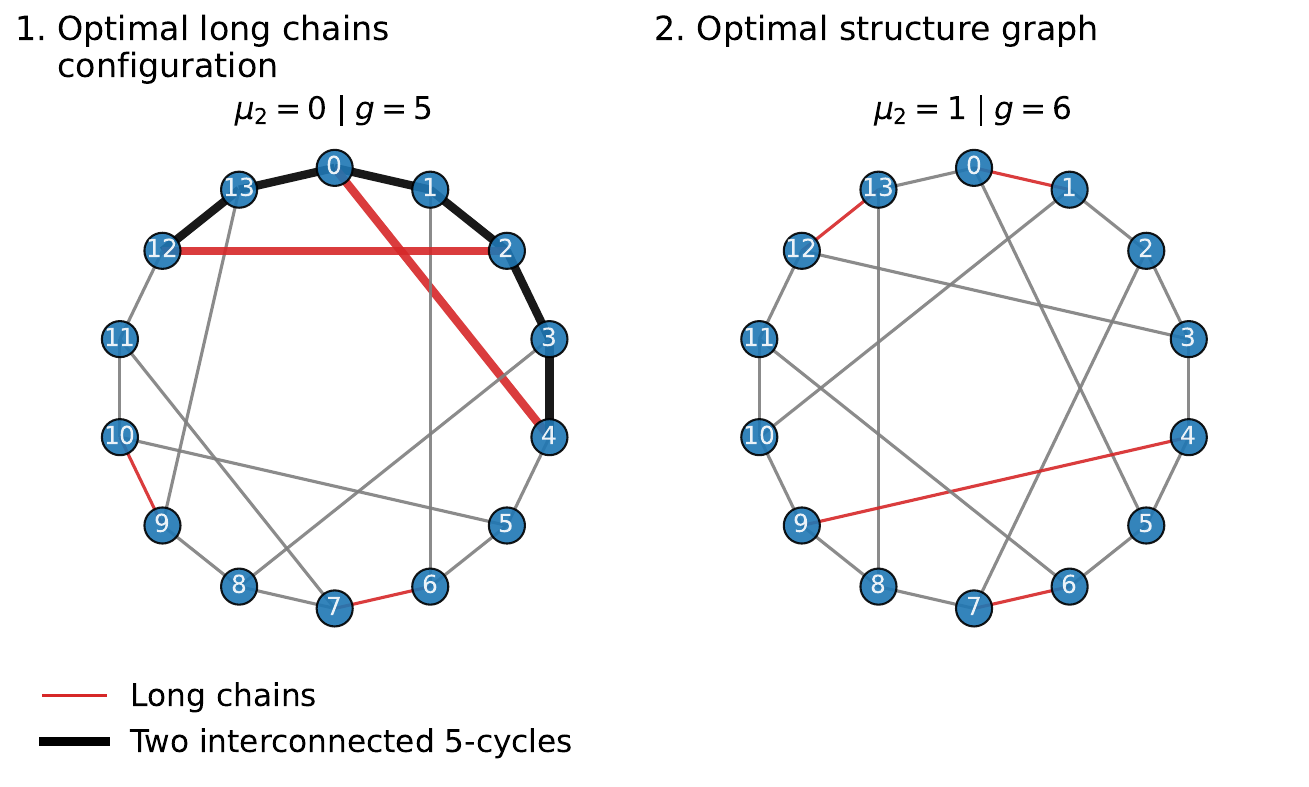}
    \caption{Two candidate marked structures for $r = 8$, $\lambda = 4$: although the right graph has higher girth, the left graph is more reliable near zero due to better separation among long chains and the presence of two interconnected 5-cycles.}
            \label{fig:long_chains}
\end{figure}

\paragraph{Integer linear programming.} To systematically identify optimal long-chain configurations, we formulate the problem
as an integer linear program (ILP). For each chain $e_i$ of the structure graph $G$,
we introduce a binary variable $x_i \in \{0,1\}$ indicating whether $e_i$ is selected
as a long chain. With this representation, all structural quantities minimized in
Algorithm~\ref{alg:most_reliable} can be expressed using linear constraints and
objective functions.

For example, the separation coefficient $\mu_d$, which counts the number of pairs
of long chains at graph distance $d$, can be modeled by introducing auxiliary binary
variables $x_{ij}$ for each pair of edges $(e_i,e_j)$ at distance $d$. The variable
$x_{ij}$ indicates whether both $e_i$ and $e_j$ are long chains and is enforced by
the linear constraint
\[
x_{ij} \ge x_i + x_j - 1.
\]
The separation coefficient $\mu_d$ is then given by
\[
\mu_d = \sum_{(i,j)\in \mathcal{P}_d} x_{ij},
\]
where $\mathcal{P}_d$ denotes the set of unordered pairs of chains whose distance
in $G$ is exactly $d$.

The resulting ILP takes the form
\begin{equation}
\label{eq:ilp_separation}
\begin{aligned}
\min \quad & \mu_d = \sum_{(i,j)\in \mathcal{P}_d} x_{ij} \\
\text{s.t.} \quad
& \sum_i x_i = \lambda, \\
& x_{ij} \ge x_i + x_j - 1 \quad \forall (i,j)\in \mathcal{P}_d, \\
& x_i \in \{0,1\}, \quad x_{ij} \in \{0,1\}.
\end{aligned}
\end{equation}

The same construction applies to the reliability obstruction parameters from
Lemma~\ref{lem:properties_sparse}, allowing the corresponding optimization problems
to be solved sequentially or lexicographically using ILP.

The full list of the optimal long-chain configurations is presented in the cubic database~\footnote{\url{https://github.com/RotemBrand/cubics_db}}.

\subsection{Empirical Analysis and Numerical Results}
\paragraph{Reliability percentile plots.} To further investigate the relationship between girth and reliability for larger values of $p$, we introduce \emph{reliability percentile plots}, where each line corresponds to a cubic graph. For each $p$ we sort the graphs according to their reliability. The rank is scaled to the range $[0,1]$, with $0$ indicating the most reliable graph and $1$ the least reliable at each value of~$p$. Graphs are colored according to the girth of their underlying structure graph.

The main goal of these plots is to assess whether graphs with higher girth tend to be more reliable, not only for small values of~$p$, as suggested by our theoretical analysis, but across the entire range of~$p$. The results provide strong empirical evidence supporting this hypothesis: higher-girth graphs consistently achieve higher reliability ranks across a wide range of~$p$.

To ensure that the ranking remains numerically meaningful, we restrict the upper bound of~$p$ to $0.8$. This is because the last $n - 2$ coefficients of the unreliability polynomial are identical across all graphs, causing their reliability values to converge for large~$p$. As a result, distinguishing between graphs becomes numerically unstable in that regime.

\begin{figure}
    \centering
    \includegraphics[width=0.75\linewidth]{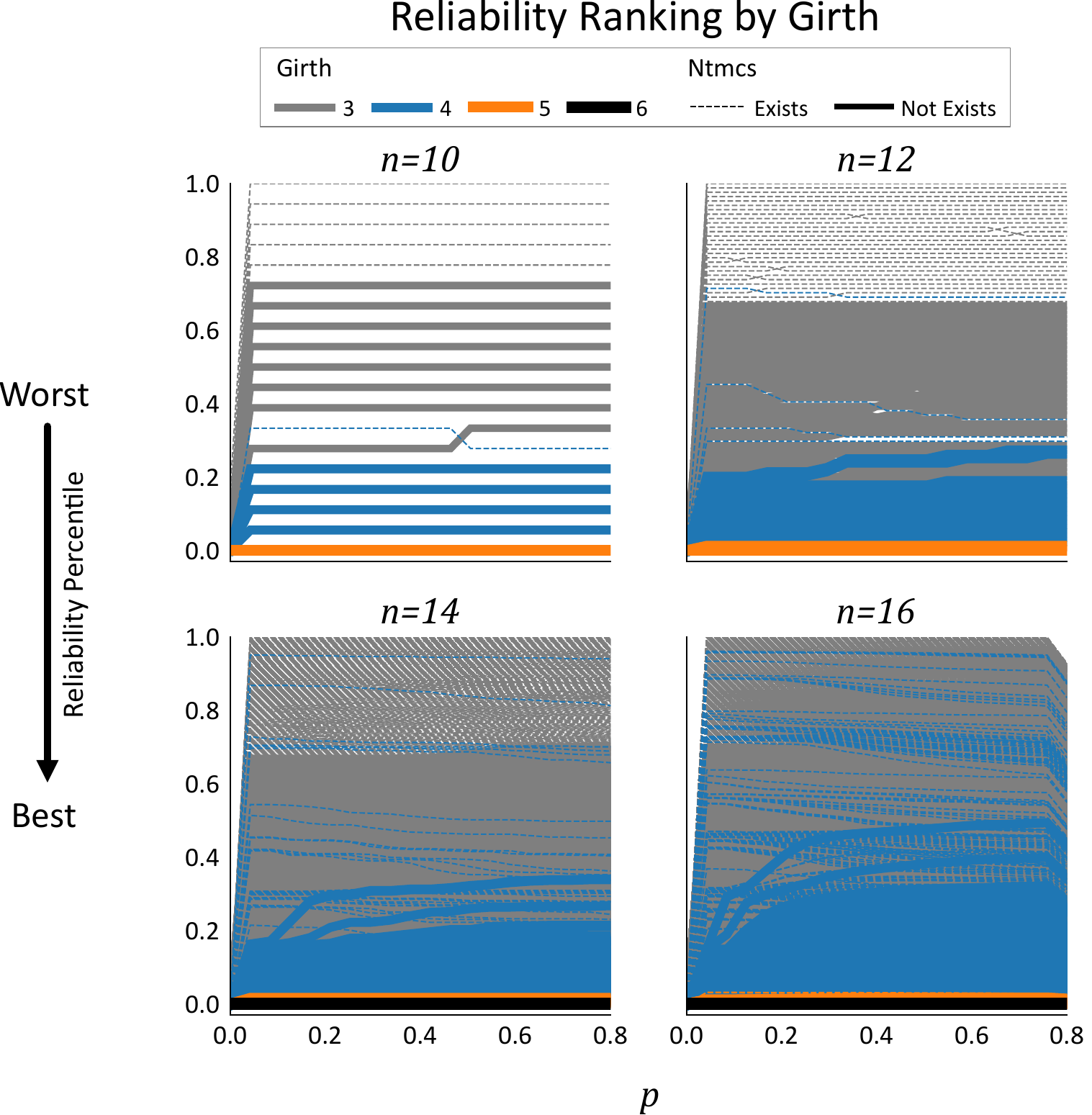}
    \caption{Cubic graphs with higher girth are consistently more reliable across a range of edge failure probabilities. The $y$-axis shows the normalized rank of the reliability function $U_G(p)$, where lower values indicate higher reliability. Each subplot displays all cubic graphs of a fixed size, colored by their girth.}
    \label{fig:graphs_compere}
\end{figure}

\section{Discussion}
This work provides a combinatorial framework for identifying the most reliable graphs with a fixed number of vertices and edges under independent edge failures. Our main contribution is the characterization of high-girth regular graphs as the most reliable in the regime of small edge failure probability $p$. This result identifies a unique candidate for the uniformly most reliable graph across several redundancy levels, in contrast to previous studies that typically focus on a single redundancy level.

Moreover, our results show that each cubic graph that is most reliable near $p=0$ serves as the structural core of the most reliable graph with the same level of redundancy and a sufficiently large number of vertices. In this way, high-reliability cubic graphs give rise to infinite families of highly reliable sparse graphs through appropriate chain constructions. At the same time, our framework provides a systematic criterion for ruling out uniform optimality: when the resulting marked structure is not tree-balanced, no uniformly most reliable graph exists for the corresponding redundancy level. These observations emphasize the central role of high-reliability cubic graphs, both as generators of reliable network families and as a basis for excluding uniform optimality in a principled manner.

By applying this framework, we construct a database of cubic graphs that are most reliable near $p=0$ for almost all redundancy levels up to $r=20$, together with the corresponding long-chain configurations. For each redundancy level in this range, the database identifies the full set of structural candidates from which uniformly most reliable graphs can arise for a sufficiently large number of vertices.

The primary limitation of our approach is its reliance on computational enumeration, which becomes increasingly intractable as the number of redundant edges grows. In addition, several of our results are asymptotic in nature. For instance, while we identify structural properties of uniformly most reliable graphs for sufficiently large numbers of vertices, we do not establish explicit bounds on the minimal graph size for which these properties hold. Future work could focus on further narrowing the enumeration search space and on sharpening the asymptotic bounds obtained in this study.

Several open questions remain. The most significant is whether the cubic graphs identified in our database are indeed uniformly most reliable, or whether counterexamples arise beyond the range considered here. As a concrete example, it would be interesting to determine whether regular graphs of maximal girth also maximize the number of spanning trees. Another natural direction is to estimate the number of cubic graphs with maximal girth as a function of the number of vertices, thereby providing explicit bounds on the complexity of enumeration-based approaches. In addition, constructing explicit families of high-girth cubic graphs and proving the absence of non-trivial minimal cut sets could further tighten existing lower bounds on the girth of the most reliable graphs. Finally, developing analytic methods for generating optimal long-chain configurations remains an important open problem, as it would complete the construction of highly reliable sparse networks without reliance on exhaustive computation.


\bibliographystyle{plain} 
\bibliography{bib} 

\end{document}